\numberwithin{equation}{section}
\newtheorem{theorem}{Theorem}[section]
\newtheorem{lemma}[theorem]{Lemma}
\newtheorem{notation}[theorem]{Notation}
\newtheorem{proposition}[theorem]{Proposition}
\newtheorem{corollary}[theorem]{Corollary}
\theoremstyle{definition}
\newtheorem{definition}[theorem]{Definition}
\theoremstyle{remark}
\newtheorem{remark}[theorem]{Remark}
\newtheorem{fact}[theorem]{Fact}
\newtheorem{example}[theorem]{Example}
\newtheorem{observation}[theorem]{Observation}
\newtheorem{discussion}[theorem]{Discussion}
\newtheorem{question}[theorem]{Question}
\newcommand{\Ass}{\operatorname{Ass}}
\newcommand{\im}{\operatorname{im}}
\newcommand{\grade}{\operatorname{grade}}
\newcommand{\tor}{\operatorname{tor}}
\newcommand{\Spec}{\operatorname{Spec}}
\newcommand{\Tr}{\operatorname{Tr}}
\newcommand{\Ht}{\operatorname{ht}}
\newcommand{\pd}{\operatorname{pd}}
\newcommand{\Syz}{\operatorname{Syz}}
\newcommand{\Der}{\operatorname{Der}}
\newcommand{\fd}{\operatorname{fl.dim}}
\newcommand{\V}{\operatorname{V}}
\newcommand{\id}{\operatorname{id}}
\newcommand{\Ext}{\operatorname{Ext}}
\DeclareMathOperator{\mz}{\mathcal{Z}}
\newcommand{\Supp}{\operatorname{Supp}}\newcommand{\nCM}{\operatorname{nCM}}
\newcommand{\eSupp}{\operatorname{hSupp}}
\newcommand{\Tor}{\operatorname{Tor}}
\newcommand{\Hom}{\operatorname{Hom}}
\newcommand{\End}{\operatorname{End}}
\newcommand{\depth}{\operatorname{depth}}
\newcommand{\coker}{\operatorname{coker}}
\newcommand{\vpl}{\operatornamewithlimits{\varprojlim}}
\newcommand{\vil}{\operatornamewithlimits{\varinjlim}}
\newcommand{\lo}{\longrightarrow}
\newcommand{\fm}{\frak{m}}
\newcommand{\fp}{\frak{p}}
\newcommand{\fq}{\frak{q}}
\newcommand{\fa}{\frak{a}}
\newcommand{\HH}{H}
\newcommand{\NN}{\mathbb{N}}
\begin{document}

\author[]{Mohsen Asgharzadeh}

\address{}
\email{mohsenasgharzadeh@gmail.com}

\title[ ]
{Cohomological splitting, realization, and  finiteness}

\subjclass[2010]{ Primary 13D45; 13D07}
\keywords{Cohomologically finite; Ext-modules; finiteness condtions; local  cohomology;  Gorenstein rings; projective dimension of injective modules; realization; splitting criteria; torsion theory.}

\begin{abstract} 
	
We search for some splitting (resp. finiteness) criteria of a given module $M$ over a local ring $(R,\fm,k)$ in terms
of the splitting (resp. finiteness) property of certain 
cohomological functors evaluated at $M$, for example the splitting of $H^i_\fm(M)$. In particular, we deal with the cohomological
splitting question posted by Vasconcelos.    We present a connection from our approach 
to the realization problem of Nunke. This is equipped with several applications. 
For instance, we recover some results of Jensen (and others) by applying simple methods. Additional
applications, including a  computation of the projective dimension of some injective modules,
are given. This enables us to extend some results of Matlis  (resp. Osofsky) on the projective dimension of $E_R(k)$ (resp. $\mathcal{Q}$).

\end{abstract}

\maketitle

\setcounter{tocdepth}{1}
\tableofcontents

\section{Introduction}

A short exact sequence  $\zeta:= 0\to M_1\to M\to M_2\to 0$ of modules over a local ring $(R,\fm)$ is called cohomologically splits at level $i$ provided the $i$-th local cohomology module $\HH^i_{\fm}(M)$
decomposes into $\HH^i_{\fm}(M_1)\oplus\HH^i_{\fm}(M_2)$. Here, we study the cohomological
splitting property and its connection with the classical splitting property of $\zeta$. 
The initial motivation comes from the following beautiful theorem of Vasconcelos \cite{v}:

\begin{theorem}\label{1.1}
	Let $(R,\fm)$ be a $1$-dimensional Gorenstein local ring and let  $\zeta$ be the exact sequence $ 0\to M_1\to M \to M_2\to 0$   of finitely generated modules of projective dimension at most one. If $$\tor(M)=\tor(M_1)\oplus\tor(M_2),$$ then $\zeta$ splits.
\end{theorem}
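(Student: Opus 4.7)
The plan is to first establish the structural fact that every finitely generated module $N$ of projective dimension at most one over a $1$-dimensional Gorenstein local ring $(R,\fm)$ splits as $N\cong \tor(N)\oplus F_N$ with $F_N$ free. Since $R$ is $1$-dimensional Cohen--Macaulay, $\tor(N)=\HH^0_\fm(N)$, and $N/\tor(N)$ is torsion-free, hence has depth at least one, making it a maximal Cohen--Macaulay $R$-module. As $R$ is Gorenstein, Auslander--Buchsbaum ($\pd+\depth=\depth R=1$) forces $N/\tor(N)$ to be free, so the sequence $0\to \tor(N)\to N\to N/\tor(N)\to 0$ splits.

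Applying this to $M_2$, write $M_2=T_2\oplus F_2$ with $F_2$ free; then $\Ext^1(F_2,M_1)=0$ and the inclusion $T_2\hookrightarrow M_2$ induces an isomorphism $\Ext^1(M_2,M_1)\xrightarrow{\sim}\Ext^1(T_2,M_1)$. It therefore suffices to split the pulled-back extension
\[
\zeta|_{T_2}\colon 0\to M_1\to \pi^{-1}(T_2)\to T_2\to 0,
\]
where $\pi\colon M\to M_2$ is the surjection of $\zeta$.

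Applying the structural result to $M$ as well gives $M=T\oplus F$ with $F$ free. Since $\pi$ sends torsion to torsion, $\pi|_T$ lands in $T_2$ and realizes the map $T\to T_2$ from the long exact sequence of local cohomology. By the hypothesis $\tor(M)=\tor(M_1)\oplus\tor(M_2)$, the sequence $0\to T_1\to T\to T_2\to 0$ is split-exact; pick a section $P\colon T_2\to T$. Then the composite $\tau\colon T_2\xrightarrow{P}T\hookrightarrow M$ lands in $\pi^{-1}(T_2)$ and satisfies $\pi\tau=\id_{T_2}$, splitting $\zeta|_{T_2}$. Hence $[\zeta]=0$ in $\Ext^1(M_2,M_1)$ and $\zeta$ splits. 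I expect the main difficulty to lie in the structural lemma $N\cong\tor(N)\oplus F_N$, which genuinely uses both the Gorenstein hypothesis (for MCM$=$free via Auslander--Buchsbaum) and $\pd\leq 1$; once that is in hand, the rest is a very clean pullback argument that isolates exactly the torsion-to-torsion component of $\zeta$ which the hypothesis controls.
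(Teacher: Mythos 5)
There is a genuine gap in your first step, and it is exactly where the theorem's content lies. You claim that every finitely generated $N$ with $\pd_R N\le 1$ splits as $\tor(N)\oplus F_N$ with $F_N$ free, arguing that $N/\tor(N)$ is maximal Cohen--Macaulay and then invoking Auslander--Buchsbaum. But Auslander--Buchsbaum only applies to modules of \emph{finite} projective dimension, and $\pd_R N\le 1$ does not imply $\pd_R\bigl(N/\tor(N)\bigr)<\infty$: from $0\to\tor(N)\to N\to N/\tor(N)\to 0$ you would also need $\pd_R(\tor(N))<\infty$, and over a non-regular $1$-dimensional Gorenstein ring the finite-length module $\tor(N)$ will typically have infinite projective dimension.

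The lemma is in fact false. Let $R:=k[[t^2,t^3]]$, a non-regular $1$-dimensional Gorenstein local domain, set $x:=t^2$, $y:=t^3$, so $\fm=(x,y)$, and let $N$ be the cokernel of the map $R\to R^2$, $1\mapsto(x,y)$. Since $(x,y)=\fm$ and $\depth R=1$, this map is injective (Fact~\ref{mc}), so $\pd_R N\le 1$. The map $N\to R$, $(a,b)\mapsto ay-bx$, has image $\fm$ and kernel $\cong k$; as $\fm\subset R$ is torsion-free this identifies $\tor(N)\cong k$ and $N/\tor(N)\cong\fm$. But $\fm$ is not free (it is minimally $2$-generated of rank $1$), and $0\to k\to N\to\fm\to 0$ cannot split: a splitting would make $k$ a direct summand of $N$, forcing $\pd_R k\le 1$ and hence $R$ regular by Auslander--Buchsbaum--Serre, a contradiction. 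So applied to $M_2$ your Step~1 can fail, and the crucial reduction $\Ext^1_R(M_2,M_1)\cong\Ext^1_R(T_2,M_1)$ is unavailable. (The remainder of your argument is sound: the hypothesis together with Fact~\ref{csp} does give a section $T_2\to\tor(M)$ of $\pi|_{\tor(M)}$, and composing with $\tor(M)\hookrightarrow M$ splits the pulled-back extension; but without the free complement this only controls the $\Ext^1_R(T_2,M_1)$ component of $[\zeta]$, not all of it.)

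When $R$ is a DVR the lemma does hold, but there the theorem is trivial; the content is precisely the non-regular Gorenstein case, where the shortcut is blocked. Compare with the proof of Theorem~\ref{high}, the $d$-dimensional analogue: one never decomposes $M_2$ there. Instead, local duality converts $\tor(M)\cong\tor(M_1)\oplus\tor(M_2)$ into $\Ext^1_R(M,R)\cong\bigoplus_i\Ext^1_R(M_i,R)$; Watts' theorem, using $\pd\le 1$ to make each $\Ext^1_R(M_i,-)$ a right exact coproduct-preserving functor, upgrades this to $\Ext^1_R(M,M_1)\cong\bigoplus_i\Ext^1_R(M_i,M_1)$; and a length count in the long exact $\Ext$-sequence of $\zeta$ then forces the connecting map $\Hom_R(M_1,M_1)\to\Ext^1_R(M_2,M_1)$ to vanish.
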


For each  functor $\mathcal{F}$, we assign the concept of  $\mathcal{F}$-splitting. In \S2 we deal with  the tensor and hom functors, and study the corresponding splitting types.
Then we investigate splitting with respect to their derived functors, i.e., we study  the $\Tor$-splitting and $\Ext$-splitting.  These continue the work of  Guralnick \cite{Gur}, and may regard as a root of cohomological
splitting property.

 Vasconcelos posted the following splitting question:
  \begin{question}\label{vq}
Does Theorem \ref{1.1} hold  for  any $1$-dimensional Cohen-Macaulay   rings? 
\end{question}

  \S3 equipped
with a series of observations about Question \ref{vq} and presents a higher-dimensional analogue of Vasconcelos' theorem. We do these, by applying various aspects of cohomological
splitting property. For  samples, see Proposition \ref{o3.c} and \ref{p1}. Also, Theorem \ref{high} and its corollaries deal with  Question \ref{vq} . 

The next goal is to find  sub-functors  or even direct summands of  $\HH^i_{\fm}(-)$, see Proposition \ref{ptri} and subsequent examples. This is inspired  from Auslander's comments on the functor ext \cite{comment}, where
he investigated sub-functors of $\Ext$. Using these, we select the following  application:

\begin{proposition}\label{1.41}
	Let $(R,\fm)$ be a  complete  Gorenstein  local ring and $M$ be finitely generated. Suppose $\HH^i_{\fm}(M)$
is nonzero and injective. Then	$i=\dim R$.	 In particular,  $M$  is  $(S_2)$ if and only if $M$ is free.	
\end{proposition}

We note that freeness of a reflexive module is a challenging problem. For instance, see \cite{moh2} and references therein. 
Despite  the importance of  $(S_2)$, the following  easy consequence of Proposition \ref{1.41} holds even  the module is not assumed $(S_2)$.\begin{observation}\label{1.4}
Let $R:=k[[x_1,\ldots,x_d]]$ where $k$ is a field of zero characteristic  and let $M$ be a  finitely generated $R$-module.  We show $M$ is holonomic if and only if $M$  is free as an  $R$-module. \end{observation}
So, Observation \ref{1.4} presents another connection of  commutative 
algebra to 
D-Modules. Namely, see the standard text books \cite[3.3.2]{dd} and \cite[Theorem 1.1.25]{b} where Observation \ref{1.4} proved by applying well-known existence
of solutions to a certain differential system. This motivates us to present the prime characteristic analogue of 
Observation \ref{1.4} in Corollary \ref{PRIME}. It seems this is new, as \cite{b,dd} are about of zero characteristic rings.

According to group theory, we know  the size of homological objects $\Hom_\mathbb{Z}(-,\mathbb{Z})$ and $\Ext^1_\mathbb{Z}(-,\mathbb{Z})$ may force
something  about the size of the abelian group $(-)$. The initial works in this area are due to Nunke \cite{n} and Jensen \cite{j}. Following these, recall that
a module $\mathcal{M}$ is called cohomologically finite (length) if
$\Ext^i_R(\mathcal{M},R)$ is finite (length) for all $i$. \S  5 deals with cohomologically finite (length) modules, and connect  them to finite (length) modules. For instance, see   Proposition \ref{br}.
 This is inspired from a result of Bredon from his famous book on sheaf theory \cite{br}, and suggests the following problem:  
\begin{question} \label{1.5}Suppose we know 
some properties of	$\Ext^i_R(-,R)$ 	for all $i$. What can say about the corresponding property of the module $(-)$?\end{question}

Concerning Question \ref{1.5}, some properties of modules descent from the $\Ext$-family to $(-)$  if
the topology of $R$ is reach, e.g., it is complete with respect to adic-topology.
A module $\mathcal{M}$ is called cohomologically zero if
$\Ext^i_R(\mathcal{M},R)=0$  for all $i$. For  a finitely generated module,  zero and cohomologically zero are equivalent  notations.
The finitely generated assumption of $\mathcal{M}$ is important.
For more information on this  see \cite{a}.  Despite this,  we present more evidences on Question \ref{1.5}.

In \S 6  
 we deal with cohomologically zero injective modules.
 Again,  this is  motivated from Auslander's comments on the functor Ext, 
where among other things, he proved the cohomologically zero property of $\mathcal{Q}(R)$, where  $R$ is  a complete local domain.
We extend this result and present some partial converses. Then, we   study  the cohomological zero property of  $E_R(R/\fp)$
where $\fp\in\Spec(R).$  To state an application, recall that Osofsky \cite{os} computed projective dimension of injective envelop of $R$ when  $R$ is a local ring of a polynomial ring over $\mathbb{R}$ in $m$ variables with $m\geq n+3$.  Her answer is very interesting: $$\pd_R(\mathcal{Q}(R))=n+2 \Longleftrightarrow2^{\aleph_n}=\aleph_{n+1}.$$
So, it depends to  accept generalized continuum hypothesis or not.  What is $\pd_R(\mathcal{Q}(R))$?  A domain is called Matlis provided
$\pd_R(\mathcal{Q}(R))=1$. Matlis  results \cite{matcanada} show that we may assume that the Krull dimension of $R$ is bigger than $1$. Also, we may assume the rings are uncountable, because any countable domain is Matlis, see \cite{kapc}. The case of $2$-dimensional regular rings may follow by   combining the work of Kaplansky \cite{kapc} along with Noether's normalization theorem (also, see Example \ref{2com}). We encourage the interested reader  to compute the first syzygy module of the fraction field of  $R:	= k[ x,y,z ]_{\fm}/(x^2 + y^ 3 + z^5 )$. 

\begin{example}	\label{1.6}
Let $R:	= \mathbb{C}[ x,y,z ]_{\fm}/(x^2 + y^ 3 + z^5 )$. Then $\pd_R(\mathcal{Q}(R))=2$.
\end{example}

  More interestingly, a natural question arises:

\begin{question}\label{qid}
	 What is
 $\pd_R(E_R(R/ \fp))$?
\end{question}
 We close \S 6 by answering this in some cases.
 It was well-known to Matlis that $$\pd_R(E_R(k))\leq \dim R\quad(\ast),$$ when $R$ is Gorenstein and local.
 One may restate this as:

\begin{fact}
	Let $(R,\fm)$ be a regular  ring  and $\fp$ be a prime ideal. Then  $\pd_R(E_R(R/ \fp))\geq \Ht(\fp)$.
\end{fact}
Here, we present some nontrivial situations for which the bound $\pd_R(E_R(R/ \fp))\geq \Ht(\fp)$ is (not) sharp. 
	For instance, suppose $R$ is regular and of dimension $d$,  we show

	\begin{enumerate}
	\item[a)]   $\pd_R(E_R(R/ \fp))=d$ when $\fp$ is of height $d-1$ and $R/ \fp$ is not complete.
\end{enumerate}

We start \S7  by a connection to the realization problem of Nunke \cite{n}. This may consider as an application of \S6. Our elementary approach reproves some technical (and important) results of Jensen \cite{j2}. Additional properties of (homologically) realizable  modules are given.
We do these tasks by applying some basic properties of local cohomology modules, a tool introduced by Grothendieck some years after than \cite{n}:

\begin{observation} \label{18}
	Let  $(R,\fm)$  be a  complete local ring,  $\underline{x}:=x_1,\ldots,x_n$ be a regular sequence
	and let $M$ be finitely generated. Then $M\cong \Ext^i_R(\HH^i_{{\underline{x}_i}}(R),M)$  for all $0<i\leq n$. In particular,
	$M$ is  realizable at level $i$. 
\end{observation}

According to the argument of Observation \ref{18},
Matlis duality has an application to the realization problem. If   we apply it for the very special case that the ring is
1-dimensional Gorenstein local integral domain, then we recover a result of Matlis, see \cite[Page 580, Cor]{matcanada}. 
We now know that the argument behind in the Observation \ref{18} is not new. However, it may simply 
some important results of Matlis \cite{matne}.
Here, we present an application of this simplification.
Indeed,  it fits into 
Question \ref{qid}, and determines the situations for which the bound $(\ast)$ is sharp:

	\begin{enumerate}
	\item[b)] $\pd_R(E_R(k))= \dim R$ when $R$ is Gorenstein,
		\item[c)]  $\pd_R(E_R(k))< \infty$ iff $R$ is Gorenstein.
\end{enumerate}Observation b) was  proved originally by Matlis as an applying his  theory developed in  \cite{matne}.
Observation c) was proved by Matlis \cite[Theorem 6]{matcanada} when $R$ is a one-dimensional integral domain. 
Also, he  extended it to the Cohen-Macaulay case, see  \cite[Corollary 3.13]{matne}.
In particular, our simplification drops this Cohen-Macaulay assumption.
This allows us   to show:

\begin{corollary}
	Let $(R,\fm)$ be excellent
	and let $I$ be the defining ideal of non Cohen--Macaulay locus of   $R$.
	 Then $\pd_R(E_R(R/\fp))= \infty$
	for all $\fp\in\V(I)$.
\end{corollary}

In \S8  
we study cohomologically zero flat modules.
Concerning this, there are series of interesting works on a result of Jensen  \cite{j2} (reproved by  Buchweitz and Flenner \cite[Corollary 1]{bf}). Both proofs are not trivial.
In Corollary \ref{f}, we use an easy natural transformation originally due  to  Auslander,
to simplify \cite[Corollary 1]{bf}. We apply the realization problem to present 
another homological property of Dedekind domains. This was started by the seminal works
  \cite{n} and  \cite{j2}. Recall that this section has an application to Question \ref{qid}.

In Section 9,  we give a  splitting criteria for   short exact sequences of torsion-free modules. Again, some local cohomology arguments appear. \begin{proposition}\label{splitmat1}
	Let $(R,\fm)$ be a local domain and $f\in R$ be nonzero.
	Let $\zeta:=0\to R\to \mathcal{A} \to R_f\to 0$ be such that $\mathcal{A}$  decomposes into nonzero modules.
	Then $\zeta$ splits.
\end{proposition}
Matlis introduced the concept of $D$-rings, and he asked are $D$-rings complete 
in $R$-topology.  Kaplansky \cite{k} answered this affirmatively. As an application of Proposition \ref{splitmat1}, we present a new proof of this, namely   $D$-rings are complete 
in $R$-topology.
We close Section 9 by simplifying an splitting theorem of Chase \cite[Theorem 3.4]{Chase} and extending it removing the  integral domain domain assumption.

There are some weaker versions of homologically zero modules.
For example, the vanishing property $\Ext^i_R(\mathcal{M},R)=0$ is valid only for a restricted  range of $i$.
In Section 10 we deal with the following question: 

\begin{question}\label{101}(Gerstner)
	For which property of $R$ to
	ensure, that condition
	$\Hom_R(M,R)=\Ext^1_R(M,R)=0$
	on $R$-modules $M$ implies $M =0$. 
\end{question}

In Section 11 we comeback to  $\pd_R(\mathcal{Q}(R))$. Let us revisit Example \ref{1.6}, and apply its method to answer the question. Namely, we prove:

\begin{theorem}\label{1.13}
		Let $R$ be an affine integral domain of
	dimension $d$ over $\mathbb{C}$. The following assertions are valid:
	\begin{enumerate}	
		\item[a)] Suppose $d=1$. Then $\pd_{R}(\mathcal{Q}(R))=1$.
		\item[b)] If $d=2$ and $R$ is $UFD$, then $\pd_{R}(\mathcal{Q}(R))=2$.
		\item[c)] Adopt $\textbf{CH}$. Then $\pd_{R}(\mathcal{Q}(R))\leq2$.
		\item[d)]	Assume in addition to c) that  $R$ is $UFD$ and of dimension bigger than one. Then $\pd_{R}(\mathcal{Q}(R))=2$.
	\end{enumerate}
\end{theorem}

In Theorem \ref{113}, we present a local version of Theorem \ref{1.13}. It may be nice to note that this  local version is in the prestige work \cite[ \S 3.3]{GR} by 	Raynaud and  Gruson where they computed  projective dimension of some flat modules. 
 
 In the final section, we extend the following result of Matlts in two different directions. Namely, the rank consideration and the Krull-dimension restriction:
 
 \begin{corollary}(Matlis)
 	Let $(R,\fm)$ be a 1-dimensional complete domain with  fraction filed $Q$. Let $B\subset Q$ and $B'\subset Q$ both congaing $R$. If $Q/B\cong Q/ B'$ then $B\cong B'$.
 \end{corollary}
 
 So, one may recover a proposed isomorphism
 after applying cohomological functors, like local cohomology and certain tor-modules.

Despite these, I hope our elementary   approach  sheds more light  on the topics quoted  here.
%My thanks  go to everyone who help me to run  this project.

\section{Roots of cohomological splitting}

Let $(R,\fm,k)$ be a commutative noetherian local ring.
Let $\zeta:=0\to \mathcal{M}_1\to\mathcal{M} \to \mathcal{M}_2\to 0$ be a short exact sequence of  modules and
let $\mathcal{L}$ be a module. By $\mathcal{L}\otimes_R\zeta$ we mean the complex $$0\lo \mathcal{L}\otimes_R\mathcal{M}_1\lo \mathcal{L}\otimes_R\mathcal{M} \lo \mathcal{L}\otimes_R\mathcal{M}_2\lo 0$$
which is not necessarily exact. Similarly, $\Hom_R(\mathcal{L},\zeta)$  stands for the complex $$0\to \Hom_R(\mathcal{L},\mathcal{M}_1)\to \Hom_R(\mathcal{L},\mathcal{M}) \to \Hom_R(\mathcal{L},\mathcal{M}_2)\to 0.$$

\begin{fact} (See \cite[Theorem 3.28]{v}) \label{csp}
	If $M\cong M_1\oplus M_2$ and all modules under consideration are finitely generated, then $\zeta$ splits.
\end{fact}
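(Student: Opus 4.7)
My plan is to prove this (a classical theorem due to Miyata) by finding a suitable isomorphism $\phi : M_1 \oplus M_2 \xrightarrow{\sim} M$ for which the composition
\[
d \;:=\; p \circ \phi \circ e_2 \;\in\; \End_R(M_2)
\]
is an isomorphism; then $\sigma := \phi \circ e_2 \circ d^{-1} : M_2 \to M$ is a section of $p$ since $p \circ \sigma = d \circ d^{-1} = 1_{M_2}$, and this splits $\zeta$. By the classical Vasconcelos surjective-endomorphism lemma (any surjective $R$-endomorphism of a finitely generated module is bijective), it suffices to arrange $d$ to be surjective, and by Nakayama it is enough to force $\bar d = 1_{\bar M_2}$ after reducing modulo $\fm$.

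To accomplish this reduction, I first note that because $M \cong M_1 \oplus M_2$, the minimal numbers of generators satisfy $\mu(M) = \mu(M_1) + \mu(M_2)$. A dimension count applied to $(-) \otimes_R k$ then forces the reduced sequence
\[
\bar\zeta \;:\; 0 \to \bar M_1 \to \bar M \to \bar M_2 \to 0
\]
to remain short exact over $k$. Being an exact sequence of $k$-vector spaces it splits canonically, which packages into an isomorphism $\bar\psi : \bar M_1 \oplus \bar M_2 \xrightarrow{\sim} \bar M$ satisfying $\bar p \circ \bar\psi \circ \bar e_2 = 1_{\bar M_2}$.

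Setting $\bar\Theta := \bar\phi^{-1} \circ \bar\psi \in \operatorname{Aut}_k(\bar M_1 \oplus \bar M_2)$ and lifting $\bar\Theta$ entry-by-entry to an $R$-linear endomorphism $\Theta$ of $M_1 \oplus M_2$, the lift $\Theta$ is automatically an automorphism by Nakayama (since its reduction is invertible). Replacing $\phi$ by the composition $\phi \circ \Theta$ (still an isomorphism), the new $d$ satisfies $\bar d = 1_{\bar M_2}$, so $d$ is surjective by Nakayama and hence bijective by Vasconcelos; the section $\sigma$ above then splits $\zeta$.

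The most delicate point is establishing exactness of $\bar\zeta$: this is precisely where the hypothesis $M \cong M_1 \oplus M_2$ is genuinely used, and it cannot be weakened to, say, mere equality of classes in a Grothendieck group. Everything else is formal — a lift-and-promote argument weaving together Nakayama with the Vasconcelos lemma.
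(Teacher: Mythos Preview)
The paper does not prove this statement; it is quoted as a fact from Vasconcelos's book, so there is no in-paper argument to compare against. Your proof, however, has a genuine gap at the lifting step. You claim that the $k$-automorphism $\bar\Theta$ of $\bar M_1 \oplus \bar M_2$ can be lifted ``entry-by-entry'' to an $R$-endomorphism of $M_1 \oplus M_2$, but this requires the reduction maps $\Hom_R(M_j, M_i) \to \Hom_k(\bar M_j, \bar M_i)$ to be surjective, and they need not be: over a discrete valuation ring $R$ with residue field $k$, take $M_j = k$ and $M_i = R$; then $\Hom_R(k, R) = 0$ while $\Hom_k(k, k) = k$. You give no reason why the particular $\bar\Theta = \bar\phi^{-1}\circ\bar\psi$ lies in the image of the reduction map, and verifying this appears to require already knowing that $p$ admits an $R$-linear section, which is precisely the goal.

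Your opening move---using $\mu(M) = \mu(M_1) + \mu(M_2)$ to force $\bar\zeta$ to be short exact---is correct and is indeed the standard first step. The usual way to complete the argument avoids lifting automorphisms altogether: the same count, with lengths over $R/\fm^n$ in place of $k$-dimensions, shows that $\zeta \otimes_R R/\fm^n$ remains short exact for every $n$; over the Artinian ring $R/\fm^n$ a length computation on $\Hom(M_2/\fm^n M_2,-)$ applied to this sequence shows that the induced map to $\End(M_2/\fm^n M_2)$ has image of full length and is therefore surjective, so each $\zeta \otimes_R R/\fm^n$ splits. One then passes to $\widehat R$ and takes the inverse limit (Mittag--Leffler applies since every term has finite length) to produce a section of $\widehat p$; faithful flatness of $\widehat R$ over $R$ brings the splitting back down to $\zeta$. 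Neither the surjective-endomorphism lemma nor a lift of $\bar\Theta$ is needed.
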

In the case of maximal ideals the next result was proved by Striuli in her thesis \cite{st2}.
Here, we extend it by a simple argument:

\begin{observation}\label{o1} Let $(R,\fm)$ be local. Let $\fa\lhd  R$ and $\zeta:=0\to M_1\to M \to M_2\to 0$ be a short exact sequence of finitely generated modules. Then
$\zeta$ splits if and only if $\frac{R}{\fa^n}\otimes_R\zeta$ splits for all $n\gg 0$.
\end{observation}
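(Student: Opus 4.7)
The forward direction is immediate: tensoring any split short exact sequence with $R/\fa^n$ preserves the splitting. For the nontrivial reverse implication, my plan is to encode the obstruction to splitting as an element of the finitely generated $R$-module $\Ext^1_R(M_2, M_1)$, and then force it to be zero via Krull's intersection theorem in the local ring $R$. Implicitly, one assumes $\fa$ is proper (hence $\fa \subseteq \fm$), as the case $\fa = R$ makes the hypothesis trivial but not the conclusion.

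The setup is to choose a partial free resolution $F_1 \xrightarrow{d_1} F_0 \xrightarrow{\epsilon} M_2 \to 0$ with $F_0, F_1$ finitely generated free, possible by noetherianity of $R$. Lifting $\epsilon$ through $\pi: M \to M_2$ to $\varphi: F_0 \to M$, the composite $\varphi \circ d_1$ factors through $M_1$, producing a map $\psi: F_1 \to M_1$ that represents $[\zeta]$ modulo the image of $d_1^*: \Hom_R(F_0, M_1) \to \Hom_R(F_1, M_1)$. Set $N := \Hom_R(F_1, M_1)/\im(d_1^*)$; this is a finitely generated $R$-module, and the class $\bar\psi$ of $\psi$ in $N$ vanishes precisely when $\zeta$ splits.

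Reducing mod $\fa^n$, a short direct check shows that both $\im(\bar d_1)$ and $\ker(F_0/\fa^n F_0 \twoheadrightarrow M_2/\fa^n M_2)$ equal $(\im d_1 + \fa^n F_0)/\fa^n F_0$, independently of any $\Tor$-vanishing; consequently the reduced map $\bar\psi: F_1/\fa^n F_1 \to M_1/\fa^n M_1$ genuinely represents the extension class of the short exact sequence $R/\fa^n \otimes_R \zeta$. Splitting of that sequence yields $\bar\psi = \bar\sigma \circ \bar d_1$ for some $\bar\sigma: F_0/\fa^n F_0 \to M_1/\fa^n M_1$; lifting $\bar\sigma$ through the free module $F_0$ to $\sigma: F_0 \to M_1$ gives $\psi - \sigma d_1 \in \fa^n \Hom_R(F_1, M_1)$, that is, $\bar\psi \in \fa^n N$. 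Running this for all $n \gg 0$ and applying Krull's intersection theorem to the finitely generated $R$-module $N$ forces $\bar\psi \in \bigcap_n \fa^n N = 0$, so $\psi \in \im(d_1^*)$ and $\zeta$ splits. The main obstacle is precisely the middle step: one must verify that $\bar\psi$ really computes the extension class of $R/\fa^n \otimes_R \zeta$ even when $\Tor_1^R(R/\fa^n, M_2)$ is nonzero, and this is saved by the identification of $\ker(F_0/\fa^n F_0 \twoheadrightarrow M_2/\fa^n M_2)$ with $\im(\bar d_1)$ noted above.
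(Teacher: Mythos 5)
Your proof is correct, and it follows a genuinely different route from the paper's. The paper first reduces to the case where $R$ is complete (since a finitely generated module over a complete local ring is its own $\fa$-adic completion, and since by Fact~\ref{csp} an abstract isomorphism $M\cong M_1\oplus M_2$ already forces $\zeta$ to split), then extracts from the hypothesis the isomorphisms $M/\fa^nM\cong M_1/\fa^nM_1\oplus M_2/\fa^nM_2$ and passes to the inverse limit to recover $\widehat M\cong\widehat{M_1}\oplus\widehat{M_2}$. You instead run the obstruction-theoretic argument that the paper later borrows from Striuli in a one-element form (Fact~A inside the proof of Proposition~\ref{o3.c} is precisely your key step when $\fa=(x)$): you represent $[\zeta]$ in the finitely generated $R$-module $N=\Hom_R(F_1,M_1)/\im(d_1^\ast)$, show that split exactness of $R/\fa^n\otimes_R\zeta$ puts $[\zeta]$ into $\fa^nN$, and then annihilate it by the local Krull intersection theorem. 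This buys two things. First, you avoid both the completion reduction and Fact~\ref{csp}. Second, you sidestep the delicate point in the paper's ``taking inverse limit'' step: the per-$n$ decompositions need not be compatible with the transition maps $M/\fa^{n+1}M\to M/\fa^nM$, so they do not obviously assemble into an isomorphism of inverse limits (a gap that can be patched via Mittag--Leffler considerations or Guralnick-type lifting results, but is not addressed on the page). The paper's proof is shorter and stays purely at the level of isomorphism types; yours is more self-contained and effective. Your remark that $\fa$ must be proper for the statement to be non-vacuous is also correct and worth keeping. One small caution: your phrase ``independently of any $\Tor$-vanishing'' overstates things slightly --- identifying $\bar\psi$ with $\bar\sigma\circ\bar d_1$ does use that $\bar M_1\to\bar M$ is injective, which is part of what ``split exact'' asserts for the complex $R/\fa^n\otimes_R\zeta$; the $\Tor$-independence applies only to the reduction of the free presentation of $M_2$, not to the whole argument.
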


\begin{proof}Let $\fa\lhd  R$.
 Suppose $\frac{R}{\fa^n}\otimes_R\zeta$ splits  for all $n\gg 0$. 
 By $\widehat{-}$ we mean $\fm$-adic completion. Thanks to Fact \ref{csp}
 $\zeta$ splits if we can show that $M\cong M_1\oplus M_2$.
 This is the case provided $\widehat{M}\cong \widehat{M_1}\oplus\widehat{M_2}$,
since
 modules are finitely generated. Then we may assume that $R$ is $\fm$-adically complete.
 In particular, $R$ is $\fa$-adically complete.  
 Since $\frac{R}{\fa^n}\otimes_R\zeta$ splits for all $n\gg 0$, we have $$\frac{M}{\fa^n M}\cong \frac{M_1}{\fa^n M_1}\oplus \frac{M_2}{\fa^n M_2}$$
  for all $n\gg 0$. Recall that completion commutes with finite direct sums. Taking inverse limit, $M^{\widehat\fa}\cong M^{\widehat\fa}_1\oplus M^{\widehat\fa}_2$. Since
  modules are finitely generated, $M\cong M_1\oplus M_2$. This completes the proof.
\end{proof}
\begin{notation}Let $(R,\fm,k)$ be local. The notation $(-)^v$ stands for the Matlis functor $\Hom_R(-,E_R(k))$, 
where  $ E_R(k)$ denotes the injective envelope of $k$.\end{notation}

\begin{observation} \label{o1d} Let $(R,\fm)$ be local.
Let $\fa\lhd  R$ and 
 $\xi:=0\to \mathcal{M}_1\to \mathcal{M} \to\mathcal{M}_2\to 0$ be a short exact sequence of artinian modules.
Then $\xi$ splits if and only if $\Hom_R(\frac{R}{\fa^n},\xi)$ splits for all $n\gg 0$.
\end{observation}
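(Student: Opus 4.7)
The plan is to Matlis-dualize $\xi$ and reduce the statement to Observation \ref{o1}. Set $E:=E_R(k)$ and recall $(-)^v=\Hom_R(-,E)$. Every artinian $R$-module is automatically a module over $\widehat R$ and is Matlis reflexive, so $(-)^v$ restricts to an exact contravariant equivalence between artinian $R$-modules and finitely generated $\widehat R$-modules. Since finitely generated modules over the complete local ring $\widehat R$ are also Matlis reflexive, a short exact sequence in either category splits if and only if its Matlis dual splits.

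First I would apply $(-)^v$ to $\xi$ to obtain an exact sequence $\xi^v\colon 0\to \mathcal{M}_2^v\to \mathcal{M}^v\to \mathcal{M}_1^v\to 0$ of finitely generated $\widehat R$-modules and observe that $\xi$ splits if and only if $\xi^v$ splits. Observation \ref{o1}, applied over $\widehat R$ with the ideal $\fa\widehat R$, then says that $\xi^v$ splits if and only if $(\widehat R/\fa^n\widehat R)\otimes_{\widehat R}\xi^v$ splits for all $n\gg 0$. Because each term of $\xi^v$ is already an $\widehat R$-module, this tensor product agrees canonically with $R/\fa^n\otimes_R \xi^v$.

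It remains to identify $\Hom_R(R/\fa^n,\xi)$ with the Matlis dual of $R/\fa^n\otimes_R \xi^v$. Using Hom–tensor adjunction together with reflexivity $\mathcal{M}\cong \mathcal{M}^{vv}$ for artinian $\mathcal{M}$,
\[
\Hom_R(R/\fa^n,\mathcal{M})\cong \Hom_R\!\big(R/\fa^n,\Hom_R(\mathcal{M}^v,E)\big)\cong \Hom_R\!\big(R/\fa^n\otimes_R \mathcal{M}^v,E\big)=(R/\fa^n\otimes_R \mathcal{M}^v)^v,
\]
naturally in $\mathcal{M}$. Since $R/\fa^n\otimes_R \xi^v$ consists of finitely generated $\widehat R$-modules, Matlis reflexivity implies that this sequence splits if and only if its dual $\Hom_R(R/\fa^n,\xi)$ does; chaining the three equivalences yields the conclusion. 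The only point demanding care is keeping every module in a Matlis-reflexive class, artinian on one side and finitely generated over $\widehat R$ on the other, so that each dualization step is reversible; once that bookkeeping is settled the argument is a formal chain of equivalences reducing Observation \ref{o1d} to Observation \ref{o1}.
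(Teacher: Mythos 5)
Your proposal is correct and follows essentially the same route as the paper: dualize $\xi$ to a sequence of finitely generated $\widehat R$-modules, invoke Observation \ref{o1}, and translate $\Hom_R(R/\fa^n,\xi)$ into $R/\fa^n\otimes_R\xi^v$ via the tensor--hom (Hom-evaluation) isomorphism together with Matlis reflexivity. Your version is slightly more explicit about the bookkeeping over $\widehat R$ and packages the argument as a symmetric chain of equivalences rather than just the nontrivial implication, but the underlying idea is identical.
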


\begin{proof}
	Suppose $\Hom_R(\frac{R}{\fa^n},\xi)$ splits  for all $n\gg 0$. It is easy to 
	any artinian module has the structure of $\widehat{R}$-module, compatible with the $R$-module
	structure. From this, and without loss of the generality, we may
	assume that $R$ is complete. 
Recall that $\Hom_R(\frac{R}{\fa^n},\xi)^v$ splits.
	In view of Hom-Evaluation,  $\frac{R}{\fa^n}\otimes_R\xi^v$ splits for all $n\gg 0$.
According to Matlis theory, 	$\zeta:=\xi^v$ is  a short exact sequence of finitely generated modules.      Due to Observation  \ref{o1}, $\zeta$ splits. Taking another
	Matlis duality, we see that 
	$\xi=\zeta^{vv}$ splits.
\end{proof}
Concerning Observation \ref{o1d} (resp. \ref{o1}), the    artinian    (resp. finitely generated) assumption of modules in $\xi$ (resp. $\zeta$) is important.
To this end,  by $\Syz_n(-)$ we mean the $n$-th syzygy module of $(-)$. 

\begin{example}
Let $(R,\fm)$ be a local integral domain of dimension $d\geq 2$. We look at the exact sequence $$\xi:=0\lo\Syz_1(\fm)\lo R^{\beta_0(\fm)}\lo \fm\lo 0$$ of finitely generated modules.
It is easy to see that  $\Hom_R(\frac{R}{\fm^n},\xi)$ splits, in fact the complex 
$\Hom_R(\frac{R}{\fm^n},\xi)$ consists of zero modules, because $\xi$ consists of  modules of positive depth. If  $\xi$ splits, it follows by definition that $\fm$ is projective. It turns out that
$\fm$ is principal, a contradiction with the  generalized Krull's principal ideal theorem.
\end{example}

\begin{definition}  A short exact sequence $\zeta:=0\to \mathcal{M}_1\to \mathcal{M} \to \mathcal{M}_2\to 0$  is called   E-splits at level $i$ with respect to $\fa$, if
$$0\lo \Ext^i_R(R/ \fa^n, \mathcal{M}_1) \lo \Ext^i_R(R/ \fa^n, \mathcal{M}) \lo \Ext^i_R(R/ \fa^n, \mathcal{M}_2)	\lo 0$$
splits for all  $n\gg 0$.
Similarly, using $\Tor$-functor instead of $\Ext$-functor,  one may define  T-splitting with respect to $\fa$. 
\end{definition}

\begin{lemma}\label{dmat} Let $(R,\fm)$ be local.
Suppose a short exact sequence $\zeta:=0\to \mathcal{M}_1\to \mathcal{M} \to \mathcal{M}_2\to 0$ is   $E$-split  at level $i$ with respect to $\fa$. Then
the dual sequence	$\zeta^v$  $T$-splits  at level $i$  with respect to $\fa$. The converse holds if  $\zeta$
	consists of finitely generated modules.
\end{lemma}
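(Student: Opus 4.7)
The plan hinges on two natural isomorphisms packaging Matlis duality for derived functors: for any finitely generated $R$-module $N$ and any $R$-module $M$, one has
$$\Ext^i_R(N, M)^v \cong \Tor_i^R(N, M^v) \quad \text{and} \quad \Tor_i^R(N, M)^v \cong \Ext^i_R(N, M^v).$$
The second is the standard derived form of Hom-tensor adjunction combined with the injectivity of $E_R(k)$. The first requires $N$ to admit a resolution $P_\bullet \to N$ by finitely generated free modules (available since $R$ is noetherian and $N$ is f.g.); applying the exact functor $(-)^v$ to the cochain complex $\Hom_R(P_\bullet, M)$ computing $\Ext$, and using the identification $\Hom_R(R^k, M)^v \cong R^k \otimes_R M^v$, identifies the result termwise with $P_\bullet \otimes_R M^v$, which computes $\Tor$.

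For the forward direction, I would simply apply the Matlis functor $(-)^v$ to the split short exact sequence furnished by E-splitting, namely
$$0 \to \Ext^i_R(R/\fa^n, \mathcal{M}_1) \to \Ext^i_R(R/\fa^n, \mathcal{M}) \to \Ext^i_R(R/\fa^n, \mathcal{M}_2) \to 0.$$
Since $(-)^v$ is exact and preserves direct summands, the first isomorphism above (with $N = R/\fa^n$) produces a split short exact sequence of $\Tor_i^R(R/\fa^n, \mathcal{M}_j^v)$'s, i.e., the T-splitting of $\zeta^v = 0 \to \mathcal{M}_2^v \to \mathcal{M}^v \to \mathcal{M}_1^v \to 0$. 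No finiteness hypothesis on the $\mathcal{M}_j$ is needed in this direction, since only $N = R/\fa^n$ must be finitely generated.

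For the converse, assume $\zeta$ consists of finitely generated modules and $\zeta^v$ T-splits. Dualizing by $(-)^v$ and invoking the second isomorphism yields a split short exact sequence
$$0 \to \Ext^i_R(R/\fa^n, \mathcal{M}_1^{vv}) \to \Ext^i_R(R/\fa^n, \mathcal{M}^{vv}) \to \Ext^i_R(R/\fa^n, \mathcal{M}_2^{vv}) \to 0.$$
The main obstacle is that $\mathcal{M}_j^{vv}$ coincides with the $\fm$-adic completion $\widehat{\mathcal{M}_j}$, not with $\mathcal{M}_j$ itself, so the E-splitting of $\zeta$ does not come out immediately. Following the template of Observation \ref{o1d}, I would reduce to the case $R=\widehat R$: by flat base change $\Ext^i_R(R/\fa^n, \mathcal{M}_j) \otimes_R \widehat R \cong \Ext^i_{\widehat R}(\widehat R/\fa^n\widehat R, \widehat{\mathcal{M}_j})$, and each such Ext is finitely generated over $R$, so faithful flatness of $\widehat R$ over $R$ guarantees that splitting over $\widehat R$ descends to splitting over $R$. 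Once $R$ is complete, Matlis reflexivity gives $\mathcal{M}_j^{vv} = \mathcal{M}_j$, and the displayed sequence becomes the E-splitting of $\zeta$ at level $i$ with respect to $\fa$, closing the argument.
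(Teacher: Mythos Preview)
Your argument is correct and follows essentially the same route as the paper: dualize the split $\Ext$-sequence, invoke the natural isomorphism $\Ext^i_R(R/\fa^n,\mathcal{L})^v\cong\Tor_i^R(R/\fa^n,\mathcal{L}^v)$, and for the converse reduce to the complete case so that $\zeta^{vv}=\zeta$. You are simply more explicit than the paper about justifying the reduction to $\widehat R$ via faithful flatness and base change for $\Ext$ of finitely generated modules; the paper records this step in a single phrase.
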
  

\begin{proof}
By definition, $0\to \Ext^i_R(R/ \fa^n, \mathcal{M}_1) \to \Ext^i_R(R/ \fa^n, \mathcal{M}) \to \Ext^i_R(R/ \fa^n, \mathcal{M}_2)	\to 0$ splits. In  particular its Matlis dual splits. Recall  that
 $\Ext^i_R(R/ \fa^n,\mathcal{L})^v\cong \Tor_i^R(R/ \fa^n,\mathcal{L}^v)$. From this,  $\zeta^v$  is of T-splitting type at level $i$.
 For  the converse,
without loss of the generality we  assume that $R$ is complete, and note that 	$\zeta^{vv}=\zeta$.
\end{proof}

\begin{remark}
One may extend the converse part of Lemma \ref{dmat} by assuming $\zeta$
consists of Matlis reflexive modules. Recall  by a result of Enochs and
 $Zoschinger$ that $\mathcal{M}$ is Matlis reflexive iff $\mathcal{M}$
contains a finitely generated submodule $N$ such that $\mathcal{M}/N$ is artinian.
\end{remark}

Here, is  a sample of  E-splitting (resp. T-splitting):

\begin{proposition}\label{esplit} Let $(R,\fm)$ be local.
	Let $\zeta:=0\to M_1\to M \to M_2\to 0$ be the exact sequence  of finitely generated modules and let $\fa$ be an ideal generated by a regular element $x$. If $\zeta$  E-splits at level one with respect to $\fa$,
	then $\zeta$ splits.
\end{proposition}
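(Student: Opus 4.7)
The plan is to translate the E-splitting hypothesis into the tensor-product form already handled by Observation \ref{o1}. Since $x$ is a regular element, the cyclic module $R/x^n R$ admits the two-term free resolution
\[
0 \lo R \stackrel{x^n}{\lo} R \lo R/x^n \lo 0,
\]
from which one reads off the natural isomorphism of functors $\Ext^1_R(R/x^n,-)\cong(-)/x^n(-)$ together with the vanishing $\Ext^{\geq 2}_R(R/x^n,-)=0$. Naturality implies that the morphisms appearing in the $\Ext^1$-part of the long exact Ext-sequence attached to $\zeta$ coincide, under this identification, with the maps $M_1/x^n M_1 \to M/x^n M \to M_2/x^n M_2$ induced by the morphisms in $\zeta$.

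Consequently, the E-splitting hypothesis at level one with respect to $\fa=(x)$ becomes the assertion that for all $n\gg 0$ the tensored complex
\[
R/x^n\otimes_R\zeta\;:\;0\lo M_1/x^nM_1 \lo M/x^nM \lo M_2/x^nM_2 \lo 0
\]
is short exact and splits. Left-exactness is built into the hypothesis (it is the injectivity of the first map in the split $\Ext^1$-sequence), while exactness in the middle and on the right comes for free from right-exactness of $R/x^n\otimes_R-$. Observation \ref{o1}, applied with the ideal $\fa=(x)$, then delivers the splitting of $\zeta$ at once.

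The main (and essentially only) obstacle is the functorial identification of the $\Ext^1$-part of the long exact Ext-sequence with $R/x^n\otimes_R\zeta$. This amounts to chasing the short free resolution of $R/x^n$ and checking that the connecting map into $\Ext^1_R(R/x^n,M_1)$ vanishes precisely when the torsion-to-quotient connecting map in the Tor-sequence does, and that the induced maps agree with reduction modulo $x^n$. Once this naturality is installed, the proof is a direct appeal to Observation \ref{o1} with no further computation required.
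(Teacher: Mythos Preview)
Your proof is correct and follows essentially the same approach as the paper: both use the two-term free resolution of $R/x^nR$ to identify $\Ext^1_R(R/\fa^n,-)$ naturally with $(-)/x^n(-)$, thereby converting the E-splitting hypothesis into the statement that $\frac{R}{\fa^n}\otimes_R\zeta$ splits for all $n\gg0$, and then conclude via Observation~\ref{o1}. Your version is slightly more explicit about the naturality and about why the resulting sequence is exact on the left, but the argument is the same.
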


\begin{proof}
	The free resolution of $R/\fa ^n$ is given
	by $0\to R\stackrel{x^n}\lo R\to R /\fa^n \to 0$. Then for  any $\mathcal{L}$, 
we have  $$\Ext^1_R(R/ \fa^n, \mathcal{L}) =\HH(\mathcal{L}\stackrel{x^n}\lo\mathcal{L}\to 0 )=\mathcal{L}/ x^n \mathcal{L}.$$
The assumption says that the sequence $$0\lo \Ext^1_R(R/ \fa^n, M_1) \lo \Ext^1_R(R/ \fa^n, M) \lo \Ext^1_R(R/ \fa^n, M_2)	\lo 0$$splits, and in particular is exact. Combining these, we observe that the sequence
$$0\lo M_1/ x^n M_1 \lo M / x^nM\lo M_2/ x^nM_1\lo 0$$  splits, and so exact,
i.e., $\frac{R}{\fa^n}\otimes_R\zeta$ splits for all $n\gg 0$. In view of 
Observation \ref{o1} 
$\zeta$ splits.
\end{proof}

\begin{proposition}\label{tsplit} Let $(R,\fm)$ be local.
	Let $\zeta:=0\to \mathcal{M}_1\to \mathcal{M} \to \mathcal{M}_2\to 0$ be the exact sequence  of artinian modules and let $\fa$ be an ideal generated by a regular element $x$. If $\zeta$  T-splits at level one with respect to $\fa$,
	then $\zeta$ splits.
\end{proposition}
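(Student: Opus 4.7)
The plan is to Matlis-dualize the setup and reduce to the already-established Proposition \ref{esplit}. First, since every artinian $R$-module canonically carries a $\widehat R$-module structure compatible with the $R$-structure (and splitting as $R$-modules is equivalent to splitting as $\widehat R$-modules when all terms are artinian), one reduces to the case where $R=\widehat R$ is complete, so that the Matlis functor $(-)^v$ is a faithful exact duality between artinian and finitely generated modules, satisfying $\zeta^{vv}=\zeta$.

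Next I would transfer the T-splitting hypothesis on $\zeta$ into an E-splitting statement for $\zeta^v$. Consider the short exact sequence $\zeta^v:=0\to \mathcal{M}_2^v\to \mathcal{M}^v\to \mathcal{M}_1^v\to 0$, which consists of finitely generated $R$-modules (here we use completeness). Applying the forward implication of Lemma \ref{dmat} to $\zeta^v$ shows that $\zeta^v$ E-splits at level one with respect to $\fa$ if and only if $(\zeta^v)^v=\zeta$ T-splits at level one with respect to $\fa$; indeed the converse direction of Lemma \ref{dmat} applies since $\zeta^v$ is a sequence of finitely generated modules. Therefore the T-splitting assumption on $\zeta$ yields that $\zeta^v$ E-splits at level one with respect to the ideal $\fa=(x)$ generated by the regular element $x$.

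At this point Proposition \ref{esplit} applies directly to $\zeta^v$: it is a short exact sequence of finitely generated modules that E-splits at level one with respect to an ideal generated by a regular element, so $\zeta^v$ splits. Dualizing back, $\zeta=\zeta^{vv}$ splits as well, which is what we wanted.

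The only mild subtlety is the reduction to complete $R$ and the consistent use of Lemma \ref{dmat} in the correct direction, since the converse half of that lemma requires finitely generated inputs; this is exactly why we must dualize before invoking Proposition \ref{esplit} rather than try to run an analogue of its proof directly on the artinian side (where $\Tor_1^R(R/\fa^n,\mathcal{L})=(0:_{\mathcal{L}} x^n)$ would give only the $x^n$-torsion information, insufficient to recover splitting without a completion/inverse-limit argument). The Matlis-duality route bypasses this obstacle cleanly.
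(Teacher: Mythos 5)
Your proof is correct, and it is exactly the one-line alternative the paper itself opens its proof with (``One may use Proposition \ref{esplit} and Lemma \ref{dmat}''); you fill in the steps the paper leaves implicit, in particular the reduction to a complete ring so that $\zeta^v$ consists of finitely generated modules and $\zeta^{vv}=\zeta$, and the correct invocation of the converse half of Lemma \ref{dmat} applied to $\zeta^v$ (where the finite-generation hypothesis of that converse is available) to turn the T-splitting hypothesis on $\zeta$ into E-splitting of $\zeta^v$. After that first sentence the paper also writes out a second, ``direct'' proof: from the Koszul resolution of $R/\fa^n$ one has $\Tor_1^R(R/\fa^n,\mathcal{L})=(0:_{\mathcal{L}}x^n)\cong\Hom_R(R/x^nR,\mathcal{L})$, so T-splitting of $\zeta$ says precisely that $\Hom_R(R/\fa^n,\zeta)$ splits for $n\gg 0$, and then Observation \ref{o1d} gives the splitting of $\zeta$. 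Your closing remark that the direct $\Tor$-computation is ``insufficient without a completion/inverse-limit argument'' is accurate in spirit but slightly undersells it: Observation \ref{o1d} supplies exactly that argument (and is itself proved by Matlis-dualizing back to Observation \ref{o1}), so the direct route does go through; the two proofs are cousins, and yours is a faithful unpacking of the one the paper chose to state only in passing.
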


\begin{proof}
	One may use  Proposition \ref{esplit} and Lemma \ref{dmat}.
	Here, is a direct proof. The free resolution of $R/\fa ^n$ is given
	by $0\to R\stackrel{x^n}\lo R\to R /\fa^n \to 0$. Then for  any $\mathcal{L}$, 
	we have  $$\Tor_1^R(R/ \fa^n, \mathcal{L}) =H((0\to R\stackrel{x^n}\lo R)\otimes_R\mathcal{L})=\ker(\mathcal{L}\stackrel{x^n}\lo \mathcal{L})=(0:_\mathcal{L}x^n )\cong\Hom_R(R/x^nR, \mathcal{L})\quad(\ast)$$
	The assumption says that $$0\lo \Tor_1^R(R/ \fa^n, \mathcal{M}_1) \lo \Tor_1^R(R/ \fa^n, \mathcal{M}) \lo \Tor_1^R(R/ \fa^n, \mathcal{M}_2)	 \lo 0$$ is split exact. Combine this with $(\ast)$, we observe that 
 $\Hom_R(\frac{R}{\fa^n},\zeta)$ splits for all $n\gg 0$. According to
	Observation \ref{o1d} we know that
	$\zeta$ splits.
\end{proof}

\begin{definition}
Let $\mz\subset\Spec R$ and   $\fp \in \mz$.
Then $\mz$ is called specialization-closed if $\V(\fp)\subset \mz$.
\end{definition}

\begin{example}\label{tort}	
	Here, are some examples of specialization-closed sets:
	\begin{enumerate}
		\item[a)] Every closed subset of $\Spec(R)$ with respect to Zariski topology is specialization-closed.
		\item[b)] The set $\Spec(R)\setminus\min(R)$ is specialization-closed.
		\item[c)] Suppose $R$ is equidimensional. The set $\Spec(R)\setminus\Ass(R)$ is specialization-closed.
	\end{enumerate}
\end{example}

Let $\mz\subset \Spec R$ be  specialization-closed, and  define $$\Gamma_{\mz}(M):=\{m\in M\mid\Supp_R(Rm)\subseteq\mz\}.$$
For $i\in\NN_0$, the $i$-th right derived functor of $\Gamma_{\mz}(-)$, denoted
by $\HH^i_{\mz}(-)$. 
In the case $\mz:=\V(\mathfrak {\fa})$ this is $\HH^i_{\mathfrak{a}}(-)$.
Here, we state two easy facts about them:

\begin{fact}
	Let $(R,\fm)$ be equidimensional, and let $\mz:=\Spec(R)\setminus\Ass(R)$. Then  $\Gamma_{\mz}(L)=\tor(L)$.
\end{fact}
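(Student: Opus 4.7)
The plan is to unwind both sides to a common characterization in terms of the annihilator ideal meeting associated primes. The equidimensional hypothesis is not really used in the equality itself; it is invoked earlier (Example \ref{tort}(c)) to guarantee that $\mz=\Spec(R)\setminus\Ass(R)$ is specialization-closed, so that $\Gamma_\mz$ is well-defined.

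First I would fix $m\in L$ and observe that $Rm\cong R/\Ann_R(m)$, so $\Supp_R(Rm)=\V(\Ann_R(m))$. Hence
\[
m\in\Gamma_\mz(L)\ \Longleftrightarrow\ \V(\Ann_R(m))\subseteq\Spec(R)\setminus\Ass(R)\ \Longleftrightarrow\ \Ann_R(m)\not\subseteq\fp\ \text{for every } \fp\in\Ass(R).
\]

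Next I would turn to the torsion side. By definition $m\in\tor(L)$ means $\Ann_R(m)$ contains a non-zero-divisor of $R$, and the set of non-zero-divisors of $R$ equals $R\setminus\bigcup_{\fp\in\Ass(R)}\fp$. Since $\Ass(R)$ is a finite set (this is the only place Noetherianness enters), prime avoidance yields
\[
\Ann_R(m)\not\subseteq\bigcup_{\fp\in\Ass(R)}\fp\ \Longleftrightarrow\ \Ann_R(m)\not\subseteq\fp\ \text{for every }\fp\in\Ass(R).
\]
Combining the two displayed equivalences gives $m\in\tor(L)\iff m\in\Gamma_\mz(L)$, which finishes the proof.

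There is no serious obstacle: the argument is essentially a translation between supports and annihilators plus one application of prime avoidance. The only subtlety to flag is the role of equidimensionality, which does not enter the equality of the two submodules but only justifies working with $\Gamma_\mz$ in the first place; I would add a short parenthetical remark to that effect.
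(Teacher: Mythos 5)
Your argument is correct. The paper records this as a \emph{Fact} without supplying a proof, and the verification you give is the standard one: rewrite $\Supp_R(Rm)=\V(\Ann_R(m))$, note that $m\in\tor(L)$ means $\Ann_R(m)$ meets the set of non-zero-divisors $R\setminus\bigcup_{\fp\in\Ass(R)}\fp$, and then use prime avoidance over the finite set $\Ass(R)$ to pass between ``$\Ann_R(m)\not\subseteq\fp$ for every $\fp\in\Ass(R)$'' and ``$\Ann_R(m)\not\subseteq\bigcup_{\fp\in\Ass(R)}\fp$.'' Your parenthetical observation that equidimensionality does not enter the equality of submodules itself, but is only invoked (via part (c) of Example \ref{tort}) so that $\mz$ is specialization-closed and $\Gamma_{\mz}$ fits into the torsion-functor framework, is an accurate reading of how the hypothesis is being used in this section.
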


\begin{fact}\label{th}
	Let $(R,\fm)$ be $1$-dimensional and equidimensional. Then $\HH^0_{\fm}(L)=\Gamma_{\mz}(L)=\tor(L)$.
\end{fact}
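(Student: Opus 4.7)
My plan is to reduce both equalities to the single observation that $\mz$ collapses to $\{\fm\}$ in this setup. Once this is in hand, the second equality $\Gamma_{\mz}(L)=\tor(L)$ is immediate from the preceding fact, and the first equality $\HH^0_{\fm}(L)=\Gamma_{\mz}(L)$ follows from the definitions, since $\Gamma_{\mz}(-)=\Gamma_{V(\fm)}(-)=\Gamma_{\fm}(-)$ and the right derived functors at level zero recover the same $\fm$-torsion submodule of $L$.

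To prove $\mz=\{\fm\}$, I would use the equidimensionality hypothesis in the strong form: $\dim R/\fp = \dim R = 1$ for every $\fp\in\Ass(R)$. Under this convention no associated prime is maximal, so $\fm\notin \Ass(R)$. Conversely, every non-maximal prime of a $1$-dimensional local ring has height zero, hence is minimal, and therefore lies in $\Ass(R)$ (since $\Min(R)\subseteq\Ass(R)$ always). Consequently $\Spec(R)\setminus\Ass(R)=\{\fm\}$, i.e., $\mz=V(\fm)$, as required.

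There is no substantive obstacle in this argument: it is essentially a bookkeeping step that exploits the hypothesis $\dim R = 1$ together with the strong reading of equidimensional. The one point worth flagging is precisely this definitional convention; if one instead allowed $\fm$ to appear as an embedded associated prime, then $\mz$ would be empty and $\Gamma_{\mz}(L)=0$, while $\HH^0_{\fm}(L)$ need not vanish. Thus the fact tacitly uses the standard convention under which $\Ass(R)=\Min(R)$ here, and under that convention the short chain of identifications above yields the claim.
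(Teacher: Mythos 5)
The paper states this as a \emph{Fact} with no accompanying proof, so there is no author argument to compare against. Your verification is the natural one and is correct: in a one-dimensional local ring every non-maximal prime is minimal, so under the reading of ``equidimensional'' that forces $\Ass(R)=\Min(R)$ one gets $\mz=\Spec(R)\setminus\Ass(R)=\{\fm\}=V(\fm)$, whence $\Gamma_{\mz}=\Gamma_{\fm}=\HH^0_{\fm}$, while $\Gamma_{\mz}(L)=\tor(L)$ is the preceding fact. Your caveat is also apt: with the weaker standard convention (equidimensionality only on minimal primes), every one-dimensional local ring qualifies, yet if $\depth R=0$ then $\fm\in\Ass(R)$, $\mz=\emptyset$, and the claimed identity fails (e.g.\ $R=k[[x,y]]/(x^2,xy)$ with $L=R$), so the Fact tacitly requires $\fm\notin\Ass(R)$, i.e.\ $R$ unmixed, equivalently Cohen--Macaulay in this one-dimensional setting.
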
 

\begin{definition}
	Let $\zeta:=0\to M_1\to M \to M_2\to 0$ be a short exact sequence of (finitely generated) modules. Then	$\zeta$ is called cohomologically splits with respect to $\fa$ provided
	$$0\lo \HH^i_{\fa}(M_1) \lo \HH^i_{\fa}(M) \lo \HH^i_{\fa}( M_2)	\lo 0$$
	splits for all $i$.
\end{definition}

\begin{observation} 
	$E$-splitting implies  cohomologically splitting.
\end{observation}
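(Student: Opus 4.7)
The plan is to pass from Ext-splitting to cohomological splitting via the standard identification
$\HH^i_{\fa}(N) = \varinjlim_n \Ext^i_R(R/\fa^n, N)$,
which is functorial in $N$ and carries the Ext-long exact sequence attached to $\zeta$ to the local cohomology long exact sequence under $\varinjlim_n$.

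First, I would unpack the hypothesis. By definition, E-splitting of $\zeta$ (at level $i$, with respect to $\fa$) gives, for all $n \gg 0$, a split short exact sequence
$0 \to \Ext^i_R(R/\fa^n, M_1) \to \Ext^i_R(R/\fa^n, M) \to \Ext^i_R(R/\fa^n, M_2) \to 0.$
Consequently, the adjacent connecting homomorphisms in the long exact sequence of $\Ext^{\bullet}(R/\fa^n, -)$ vanish, and we obtain an internal direct sum decomposition $\Ext^i_R(R/\fa^n, M) = \Ext^i_R(R/\fa^n, M_1) \oplus \Ext^i_R(R/\fa^n, M_2)$ compatible with the SES maps.

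Next, I would apply $\varinjlim_n$. Because $\varinjlim$ is exact and commutes with finite direct sums, the vanishing of connecting homomorphisms persists in the colimit, so the long exact sequence of $\HH^{\bullet}_{\fa}$ breaks into the desired short exact sequence $0 \to \HH^i_\fa(M_1) \to \HH^i_\fa(M) \to \HH^i_\fa(M_2) \to 0$, while the internal decomposition descends to $\HH^i_\fa(M) = \HH^i_\fa(M_1) \oplus \HH^i_\fa(M_2)$ compatibly with the structure maps of the SES.

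The point demanding care is that a direct limit of individually split short exact sequences need not split in general, since sections chosen at different indices may be incoherent. In our setting, however, the inclusion $\Ext^i_R(R/\fa^n, M_1) \hookrightarrow \Ext^i_R(R/\fa^n, M)$ and the projection $\Ext^i_R(R/\fa^n, M) \twoheadrightarrow \Ext^i_R(R/\fa^n, M_2)$ are natural with respect to the transitions induced by the surjections $R/\fa^{n+1} \twoheadrightarrow R/\fa^n$, so the internal decomposition is preserved strictly by the transition maps of the direct system and thereby descends to the colimit. This is the only nontrivial verification; the rest of the argument is formal manipulation of the long exact sequence and its colimit.
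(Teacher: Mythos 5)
You take the same route as the paper: the paper's entire proof of this observation is the one-line invocation of $\HH^i_\fa(-)=\vil_n\Ext^i_R(R/\fa^n,-)$. You are right to flag that a direct limit of split short exact sequences need not split, but the fix you propose does not actually close that gap. The naturality you cite---that the inclusions $\Ext^i_R(R/\fa^n,M_1)\hookrightarrow\Ext^i_R(R/\fa^n,M)$ and projections $\Ext^i_R(R/\fa^n,M)\twoheadrightarrow\Ext^i_R(R/\fa^n,M_2)$ commute with the transition maps---is automatic, as these are simply the images of $\zeta$ under the functor $\Ext^i_R(R/\fa^n,-)$. What the colimit argument actually needs is that the \emph{splitting} maps, i.e.\ a system of retractions $r_n:\Ext^i_R(R/\fa^n,M)\to\Ext^i_R(R/\fa^n,M_1)$ (equivalently, sections), can be chosen compatibly with the transitions. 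E-splitting guarantees only the existence of \emph{some} retraction for each fixed $n$; the ``internal decomposition'' you invoke is not canonical but depends on that choice, and nothing in the hypothesis forces the chosen complements to be carried into one another by the transition maps.

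The concern you raised is genuine, not a formality. For instance, over $\mathbb{Z}$ a nonsplit extension $0\to\mathbb{Z}\to B\to\mathbb{Z}[1/p]\to 0$ (take $p\geq 3$) can be realized as the colimit of a direct system of split sequences $0\to\mathbb{Z}\to\mathbb{Z}^2\to\mathbb{Z}\to 0$ whose inclusion $a\mapsto(a,0)$ and projection $(a,c)\mapsto c$ are compatible with transitions acting as $\id$ on the subobject, $(a,c)\mapsto(a+c,pc)$ on the middle term, and multiplication by $p$ on the quotient. So ``the internal decomposition is preserved strictly by the transition maps'' is precisely the point at issue and cannot simply be declared. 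Either an argument specific to the $\Ext^i_R(R/\fa^n,-)$ situation is needed, or the hypothesis should be strengthened (e.g.\ to splittings chosen naturally in $n$). The paper's own proof, incidentally, does not address this point either.
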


\begin{proof}
This follows by $\HH^i_{\fa}(-)=\vil_n\Ext^i_R(R/ \fa^n, -)$.
\end{proof}

Here, is  a sample to checking the  cohomologically splitting property.

\begin{observation}\label{STAR}
	Let $(R,\fm)$ be a $3$-dimensional quasi-Gorenstein local ring and let $$\zeta:= 0\lo M_1\lo M \lo M_2\lo 0$$ be an exact sequence  of finitely generated  modules such that $\HH^{d-1}_{\fm}(M)=\HH^{d-1}_{\fm}(M_1)\oplus\HH^{d-1}_{\fm}(M_2)$.  If  $M_1$ is free, then $\zeta$ cohomologically splits with respect to $\fm$.
\end{observation}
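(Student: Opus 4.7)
The plan is to feed $\zeta$ into the long exact sequence of local cohomology and verify, degree by degree, that each short sequence $0\to H^i_{\fm}(M_1)\to H^i_{\fm}(M)\to H^i_{\fm}(M_2)\to 0$ is exact and split.  Writing $M_1\cong R^n$, the quasi-Gorenstein hypothesis yields $H^i_{\fm}(M_1)=H^i_{\fm}(R)^n=0$ for $i<d=3$ while $H^d_{\fm}(M_1)\cong E(R/\fm)^n$ is an injective $R$-module; Grothendieck vanishing removes every $i>d$.  So only $i\in\{0,1,2,3\}$ need attention.

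For $i\leq 1$ both $H^i_{\fm}(M_1)$ and $H^{i+1}_{\fm}(M_1)$ vanish, hence the induced map $H^i_{\fm}(M)\to H^i_{\fm}(M_2)$ is an isomorphism and the sequence $0\to 0\to H^i_{\fm}(M)\xrightarrow{\sim}H^i_{\fm}(M_2)\to 0$ splits trivially.

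The decisive step is $i=2$.  The long exact sequence, combined with $H^2_{\fm}(M_1)=0$, supplies a canonical injection $H^2_{\fm}(M)\hookrightarrow H^2_{\fm}(M_2)$, while the standing hypothesis (again using $H^2_{\fm}(M_1)=0$) gives an \emph{abstract} isomorphism $H^2_{\fm}(M)\cong H^2_{\fm}(M_2)$.  I then intend to invoke two standard facts: local cohomology of a finitely generated module over a local ring is artinian, and an injective map between two isomorphic artinian modules is automatically an isomorphism (compose with an abstract iso to obtain an injective endomorphism of the artinian module, whose descending chain of iterated images must stabilise, forcing surjectivity).  This upgrades the canonical injection to an isomorphism, so level $2$ splits trivially and, crucially, the connecting homomorphism $H^2_{\fm}(M_2)\to H^3_{\fm}(M_1)$ must vanish.

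With that connecting map killed, the top of the long exact sequence collapses to $0\to H^3_{\fm}(M_1)\to H^3_{\fm}(M)\to H^3_{\fm}(M_2)\to 0$, which splits because $H^3_{\fm}(M_1)\cong E(R/\fm)^n$ is injective.  The main obstacle is the artinian-endomorphism step in degree $2$: one must be careful that an abstract isomorphism can legitimately be used to promote the canonical injection to a surjection, which is exactly what artinianness gives.  The hypothesis is placed at cohomological degree $d-1$ rather than $d$ precisely so as to annihilate the connecting map that would otherwise obstruct splitting in top degree.
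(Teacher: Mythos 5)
Your proof hinges on the claim that $\HH^i_{\fm}(M_1)=\HH^i_{\fm}(R)^n=0$ for all $i<d=3$, and this is where it breaks. Quasi-Gorenstein does \emph{not} imply Cohen--Macaulay: it only forces $(S_2)$, which for a $3$-dimensional ring gives $\depth R\geq 2$, i.e.\ $\HH^0_{\fm}(R)=\HH^1_{\fm}(R)=0$ but says nothing about $\HH^2_{\fm}(R)$. Non-Cohen--Macaulay quasi-Gorenstein local rings of dimension $3$ exist, and for such a ring $\HH^2_{\fm}(M_1)\cong\HH^2_{\fm}(R)^n\neq0$ whenever $M_1\neq0$. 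The paper's own proof is careful on exactly this point --- it records only $\HH^0_{\fm}(M_1)=\HH^1_{\fm}(M_1)=0$.

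This false vanishing is not a cosmetic slip; it is load-bearing at two places. At $i=1$ you need the connecting map $\HH^1_{\fm}(M_2)\to\HH^2_{\fm}(M_1)$ to die, which is not automatic once $\HH^2_{\fm}(M_1)\neq0$. At $i=2$ your ``canonical injection $\HH^2_{\fm}(M)\hookrightarrow\HH^2_{\fm}(M_2)$'' does not exist: the long exact sequence now reads $\HH^1_{\fm}(M_2)\to\HH^2_{\fm}(M_1)\stackrel{f}\to\HH^2_{\fm}(M)\stackrel{g}\to\HH^2_{\fm}(M_2)\to\HH^3_{\fm}(M_1)$, and $\ker g=\operatorname{im} f$ need not vanish. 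Consequently there is nothing to feed into your artinian-endomorphism device. That device itself --- an injective endomorphism of an artinian module is surjective, via stabilization of the chain of iterated images --- is correct and would cleanly finish the Gorenstein (= Cohen--Macaulay quasi-Gorenstein) case, but it cannot be invoked here because the required injection is missing. The paper instead works with only $\HH^0$ and $\HH^1$ vanishing, takes the hypothesis at degree $d-1=2$ to supply the split short exact sequence there (so $f$ injective and $g$ surjective), and then reads off the vanishing of the connecting map $\HH^2_{\fm}(M_2)\to\HH^3_{\fm}(M_1)$ to land the injectivity argument in top degree.
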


\begin{proof}
Quasi-Gorenstein rings satisfy Serre's  condition $(S_2)$. From this, $\HH^0_{\fm}(M_1)=\HH^1_{\fm}(M_1)=0$. In particular, $$0\to 0=\HH^0_{\fm}(M_1) \to \HH^0_{\fm}(M) \to \HH^0_{\fm}( M_2)	\to \HH^1_{\fm}(M_1)=0$$splits. 
	We look at the long exact sequence $$0=\HH^1_{\fm}(M_1)\to \HH^1_{\fm}(M)\to \HH^1_{\fm}(M_2)\to \HH^2_{\fm}(M_1)\stackrel{f}\lo \HH^2_{\fm}(M)\to \HH^2_{\fm}(M_2)$$
	to deduce that $\HH^1_{\fm}(M)\to \HH^1_{\fm}(M_2)$ is surjective, because $f$ is injective.
By this, $$0= \HH^1_{\fm}(M_1) \lo \HH^1_{\fm}(M) \lo \HH^1_{\fm}( M_2)	\lo 0$$ splits. The splitting in the second spot is given by the assumption.
	We look at $$\HH^2_{\fm}(M)\stackrel{g}\to \HH^2_{\fm}(M_2)\to \HH^3_{\fm}(M_1)\to \HH^3_{\fm}(M)\to \HH^3_{\fm}(M_2)\to 0.$$
	Since $g$ is surjective,  we know$$\xi:=0\to \HH^3_{\fm}(M_1)\to \HH^3_{\fm}(M)\to \HH^3_{\fm}(M_2)\to 0$$ is exact. Since
	$M_1$ is free, $\HH^3_{\fm}(M_1)$ is injective as an $R$-module. Thus $\xi$ splits. In sum, $\zeta$ cohomologically splits with respect to $\fm$.
\end{proof}

In Corollary \ref{chigh} (resp. \ref{quasi}) we remove the reflexivity (resp. Cohen-Macaulay) assumption of:

\begin{corollary}\label{213}
	Let $(R,\fm)$ be a $d$-dimensional Gorenstein local ring and let  $\zeta:= 0\to M_1\to M \to M_2\to 0$ be an exact sequence   of finitely generated reflexive modules such that $\HH^{d-1}_{\fm}(M)=\HH^{d-1}_{\fm}(M_1)\oplus\HH^{d-1}_{\fm}(M_2)$. If  $M_1$ is free, then $\zeta$ splits.
\end{corollary}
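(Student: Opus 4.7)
The plan is to promote the hypothesis at level $d-1$ to full cohomological splitting of $\zeta$ with respect to $\fm$ (generalising the argument of Observation \ref{STAR} from dimension $3$ to arbitrary $d$), then to transport the splitting at the top spot through local duality to obtain an isomorphism $\Hom_R(M,R)\cong\Hom_R(M_1,R)\oplus\Hom_R(M_2,R)$, and finally to dualise once more and invoke reflexivity of the three modules to conclude $M\cong M_1\oplus M_2$. Fact \ref{csp} then delivers splitting of $\zeta$.

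For step one I would first pass to $\widehat R$ (by the usual reduction in Observation \ref{o1}, splitting of $\zeta$ is unaffected), so that Matlis duality and local duality take their cleanest form. Since $M_1$ is free and $R$ is Gorenstein of depth $d$, one has $\HH^i_\fm(M_1)=0$ for $i<d$, while $\HH^d_\fm(M_1)$ is a direct sum of copies of $E_R(k)$ and is therefore injective. For $0\leq i\leq d-2$ the local cohomology sequence at level $i$ is automatically short exact and split because $\HH^i_\fm(M_1)=\HH^{i+1}_\fm(M_1)=0$. At level $d-1$ the hypothesis yields exactness and splitting, and in particular forces the connecting map $\HH^{d-1}_\fm(M_2)\to\HH^d_\fm(M_1)$ to vanish. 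Hence
\[
0\to\HH^d_\fm(M_1)\to\HH^d_\fm(M)\to\HH^d_\fm(M_2)\to 0
\]
is short exact, and it splits because its left term is injective.

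Applying the Matlis functor $(-)^v$ to this split sequence and invoking local duality for the Gorenstein ring $R$ gives the abstract isomorphism
\[
\Hom_R(M,R)\cong\Hom_R(M_1,R)\oplus\Hom_R(M_2,R).
\]
Dualising again with $\Hom_R(-,R)$ yields $M^{**}\cong M_1^{**}\oplus M_2^{**}$, and the reflexivity hypothesis collapses this to $M\cong M_1\oplus M_2$. Since all modules are finitely generated, Fact \ref{csp} then gives that $\zeta$ splits.

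The main obstacle I anticipate is keeping the duality bookkeeping honest: local duality in its cleanest statement requires a complete ring, and one has to verify that the abstract Matlis-dual isomorphism lies in the right category to be re-dualised. The reduction to the complete case via Observation \ref{o1} removes this friction, but one also has to check that reflexivity is preserved under $\fm$-adic completion, which is standard for finitely generated modules over a Noetherian local ring.
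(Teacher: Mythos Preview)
Your proposal is correct and follows essentially the same route as the paper's proof: both extract from Observation \ref{STAR} (generalised to arbitrary $d$, using $\HH^i_\fm(M_1)=0$ for $i<d$ and the injectivity of $\HH^d_\fm(M_1)$) a split short exact sequence at the top spot, then pass through local duality to $M^\ast\cong M_1^\ast\oplus M_2^\ast$, dualise once more, and finish via reflexivity and Fact \ref{csp}. Your explicit reduction to the complete case is a welcome bit of hygiene that the paper leaves implicit.
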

The notation $(-)^\ast$ stands for $\Hom_R(-,R)$.

\begin{proof}
	According to  the proof of Observation \ref{STAR} the sequence 
	$$0\lo \HH^d_{\fm}(M_1)\lo \HH^d_{\fm}(M)\lo \HH^d_{\fm}(M_2)\lo 0$$ splits. Also, its Matlis dual  splits. 
	In the light  of local duality we observe that $M^\ast\cong M_1^\ast\oplus M_2^\ast$.
	Taking another $(-)^\ast$ yields that $M^{\ast\ast}\cong M_1^{\ast\ast}\oplus M_2^{\ast\ast}$.
	By reflexivity,  $$M\cong M^{\ast\ast}\cong M_1^{\ast\ast}\oplus M_2^{\ast\ast}\cong M_1\oplus M_2.$$ According to Fact \ref{csp}
	$\zeta$ splits.
\end{proof}

By $\pd_R(-)$   we mean the projective dimension  of $(-)$.
Corollary \ref{213} is not true if instead of $M_1$ we assume $M$ is free:

\begin{example}\label{dgeq1}
	Let $(R,\fm,k)$ be a Cohen-Macaulay local ring of dimension $d>1$ which is not regular, and look at $\zeta:=0\to\Syz_{d+1}(k)\to R^{\beta_d(k)}\to \Syz_{d}(k)\to 0$. 
Then 	\begin{enumerate}
	\item[a)] $\zeta$ consists of
	reflexive modules.
	\item[b)]  $\HH^{d-1}_{\fm}(M)=\HH^{d-1}_{\fm}(M_1)\oplus\HH^{d-1}_{\fm}(M_2)$.
	\item[c)] $\zeta$ does not split.
\end{enumerate}	 
\end{example}

\begin{proof}Recall that $\zeta$ is a sequence of maximal
	Cohen-Macaulay modules, and so they are reflexive modules, because they are second syzygy modules as $d>1$.  In view of definitions we observe that $$0=\HH^{d-1}_{\fm}(M)\cong\HH^{d-1}_{\fm}(M_1)\oplus\HH^{d-1}_{\fm}(M_2)=0  .$$ Now, suppose on the way of contradiction that $\zeta$ splits. By definition, 
	$\Syz_{d+1}(k)$ is projective, e.g., $\pd_R(k)<\infty$, and consequently, $R$ is regular. This is excluded by the assumption.
\end{proof}

\section{Cohomological splitting}

We start with

\begin{question} \label{1.2}(See \cite[Page 78]{v})
	Let $R$ be $1$-dimensional Cohen-Macaulay local  ring and let  $\zeta$ be the exact sequence $ 0\to M_1\to M \to M_2\to 0$   of finitely generated modules of projective dimension at most one such that $$\tor(M)=\tor(M_1)\oplus\tor(M_2).$$ Is $\zeta$ split?
	\end{question}

\begin{proposition}\label{o3}
	Question \ref{1.2}
is true if   $M$ is torsion-free.
\end{proposition}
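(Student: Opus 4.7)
The plan is to exploit the torsion-free hypothesis on $M$ to transfer torsion-freeness to $M_2$ via the decomposition $\tor(M)=\tor(M_1)\oplus\tor(M_2)$, and then invoke Auslander--Buchsbaum in the narrow setting of a one-dimensional Cohen--Macaulay ring to conclude that $M_2$ is actually free. Once $M_2$ is projective, the splitting of $\zeta$ is automatic.

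First I would use the one-dimensional Cohen--Macaulay hypothesis to identify torsion with local cohomology: since $\fm$ contains a non-zero-divisor, Fact \ref{th} gives $\tor(L)=\HH^0_{\fm}(L)$ for every finitely generated $L$. In particular, $M$ torsion-free means $\tor(M)=0$. The hypothesis $\tor(M)=\tor(M_1)\oplus\tor(M_2)$ then forces $\tor(M_2)=0$, so $M_2$ is torsion-free as well (and $M_1\subseteq M$ is torsion-free for free).

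Next, I would run Auslander--Buchsbaum on $M_2$. Any non-zero-divisor of $R$ contained in $\fm$ is also a non-zero-divisor on the torsion-free module $M_2$, so $\depth_R(M_2)\geq 1$; combined with $\depth_R(M_2)\leq\dim R=1$ this gives $\depth_R(M_2)=1$. Since $\pd_R(M_2)\leq 1<\infty$ by hypothesis, the Auslander--Buchsbaum formula
\[
\pd_R(M_2)+\depth_R(M_2)\;=\;\depth(R)\;=\;1
\]
yields $\pd_R(M_2)=0$, i.e.\ $M_2$ is free. Hence $\zeta$ splits.

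I do not anticipate a genuine obstacle: the torsion-free assumption on $M$ is strong enough to collapse both the torsion of $M_2$ and its depth against the low dimension of $R$, so the heart of Question~\ref{1.2} (which concerns how $\tor$ interacts with non-free direct summands) never appears. The only point to be slightly careful about is checking that ``torsion-free'' in the sense used by Vasconcelos agrees with $\HH^0_{\fm}=0$ in the Cohen--Macaulay one-dimensional setting, which is exactly what Fact \ref{th} provides.
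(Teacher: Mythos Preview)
Your proof is correct and follows the same route as the paper: from $\tor(M)=0$ and the decomposition hypothesis you get $\tor(M_2)=0$, hence $\depth_R(M_2)\geq 1$, and Auslander--Buchsbaum forces $M_2$ to be projective, so $\zeta$ splits. The paper's only extra remark is the harmless reduction $M_2\neq 0$ before invoking positive depth; your invocation of Fact~\ref{th} is not strictly needed (the torsion decomposition alone already gives $\tor(M_2)=0$), but it does no harm.
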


\begin{proof}
 In view of $\tor(M)=\tor(M_1)\oplus\tor(M_2)$
	we deduce that $\tor(M_2)=0$. Without loss of the generality we assume that $M_2\neq 0$.
	This implies that $M_2$ is of positive depth. Thanks to Auslander-Buchsbaum formula,   we know $M_2$ is  projective. By definition, the sequence   $0\to M_1\to M\to M_2\to 0$
splits, as  claimed.
\end{proof}

The above result shows the assumption $d>1$ is essential in Example \ref{dgeq1}. Its proof shows:

\begin{observation}
	Question \ref{1.2}
	is true if in addition $M_2$ is torsion-free.
\end{observation}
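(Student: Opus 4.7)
The plan is to adapt the argument of Proposition~\ref{o3}, in which torsion-freeness of $M$ was used \emph{solely} to force torsion-freeness of $M_2$. Since the present hypothesis places torsion-freeness directly on $M_2$, that intermediate step is bypassed. The case $M_2=0$ is trivial, so assume $M_2\neq 0$. Because $R$ is $1$-dimensional Cohen--Macaulay (and hence equidimensional), Fact~\ref{th} identifies $\HH^0_{\fm}(M_2)$ with $\tor(M_2)$; torsion-freeness then gives $\HH^0_{\fm}(M_2)=0$, equivalently $\depth M_2 \geq 1$.

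Next, the hypothesis $\pd_R M_2 \leq 1 < \infty$ makes the Auslander--Buchsbaum formula available, yielding $\pd_R M_2 + \depth M_2 = \depth R = 1$. Combined with $\depth M_2 \geq 1$ this forces $\pd_R M_2 = 0$, so $M_2$ is free. The surjection $M \to M_2$ then admits a section, and $\zeta$ splits.

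Two structural remarks: first, in this form the torsion-decomposition hypothesis $\tor(M)=\tor(M_1)\oplus\tor(M_2)$ of Question~\ref{1.2} plays no role whatsoever; torsion-freeness of $M_2$ alone suffices. Second, there is no genuine obstacle to the argument, since the only nontrivial ingredient is the dictionary between torsion and depth in dimension one (Fact~\ref{th}), which applies precisely because $1$-dimensional Cohen--Macaulay rings are equidimensional.
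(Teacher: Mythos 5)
Your argument is correct and is essentially the paper's: the paper proves this observation simply by pointing back to the proof of Proposition~\ref{o3}, and that proof, once $\tor(M_2)=0$ is granted (here by hypothesis rather than deduced from torsion-freeness of $M$), is exactly the depth/Auslander--Buchsbaum computation you carry out. Your parenthetical remark that the torsion-decomposition hypothesis $\tor(M)=\tor(M_1)\oplus\tor(M_2)$ becomes superfluous under this extra assumption is accurate, though the paper does not comment on it.
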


By  $\id_R(-)$  we mean the  injective dimension  of $(-)$.

\begin{proposition}\label{o3.c}
	Let $R$ be a local ring of dimension one, $M$ be torsion-free and $M_1$ be of finite injective dimension.
	Let  $$\zeta:= 0\lo M_1\lo M \lo M_2\lo 0$$ be an exact sequence   of finitely generated  modules. If $\tor(M)=\tor(M_1)\oplus\tor(M_2)$, then $\zeta$ splits.
\end{proposition}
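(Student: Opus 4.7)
The proof begins with the same torsion observation as in Proposition \ref{o3}: since $M$ is torsion-free, one has $\tor(M)=0$, and combining with the hypothesis gives $\tor(M_1)\oplus\tor(M_2)=0$, so both $M_1$ and $M_2$ are torsion-free. Splitting $\zeta$ is equivalent to $\Ext^1_R(M_2,M_1)=0$, and the task is to extract this vanishing from the finite injective dimension of $M_1$ together with the torsion-freeness of $M_2$.

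The next step is to upgrade the ring-theoretic hypothesis.  Assume $M_1\neq 0$, otherwise $\zeta$ is trivially split.  Bass's formula gives $\id_R M_1=\depth R$.  If $\depth R=0$, then $M_1$ would be a nonzero finitely generated injective $R$-module, which would force $R$ to be Artinian, contradicting $\dim R=1$.  Hence $\depth R=1$, so $R$ is $1$-dimensional Cohen-Macaulay and $\id M_1=1$.  Over such a ring a torsion-free finitely generated module has depth equal to $\dim R=1$, so both $M_1$ and $M_2$ are maximal Cohen-Macaulay.

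To conclude, I would pass to the completion $\widehat{R}$, which is still $1$-dimensional Cohen-Macaulay and, being a quotient of a regular local ring, admits a canonical module $\omega$.  The hypotheses, the conclusion, and the group $\Ext^1_R(M_2,M_1)$ all commute with faithfully flat base change to $\widehat{R}$ on finitely generated modules, so it is enough to prove the vanishing over $\widehat{R}$.  Now $\widehat{M_1}$ is a maximal Cohen-Macaulay $\widehat{R}$-module of finite injective dimension, so by the Sharp--Foxby correspondence $\Hom_{\widehat{R}}(\omega,\widehat{M_1})$ is maximal Cohen-Macaulay of finite projective dimension, hence (by Auslander-Buchsbaum) free of some rank $k$, and therefore $\widehat{M_1}\cong\omega^k$.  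Since $\widehat{M_2}$ is maximal Cohen-Macaulay, the defining property of $\omega$ yields $\Ext^1_{\widehat{R}}(\widehat{M_2},\omega)=0$; hence $\Ext^1_{\widehat{R}}(\widehat{M_2},\widehat{M_1})=0$, and descending to $R$ gives $\Ext^1_R(M_2,M_1)=0$, so $\zeta$ splits.

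The only real obstacle is the identification $\widehat{M_1}\cong\omega^k$, which relies on the Sharp--Foxby characterization of MCM modules of finite injective dimension over a $1$-dimensional Cohen-Macaulay local ring with canonical module; with that in hand, everything else reduces to standard facts (Bass's formula, faithfully flat base change for Ext of finitely generated modules, and the defining vanishing of $\Ext^1$ into the canonical module).
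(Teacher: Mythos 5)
Your proof is correct, but it follows a genuinely different route from the paper's. The paper, after invoking Bass's theorem to see that $R$ is Cohen--Macaulay with $\id_R M_1=1$ and noting both $M_i$ are torsion-free, picks a nonzerodivisor $y$, tensors $\zeta$ with $R/y^nR$ (exact since $\Tor_1^R(R/y^n,M_2)=0$), and observes that $M_1\otimes R/y^n$ is injective over $R/y^nR$ because the injective dimension drops by one; hence the reduced sequence splits for every $n$. It then applies Striuli's lemma (Fact A in the paper) to conclude $\zeta\in y^n\Ext^1_R(M_2,M_1)$ for all $n$, and finishes with the Krull intersection theorem, getting $\zeta=0$. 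Your argument instead passes to the completion, invokes the Sharp--Foxby theory to identify $\widehat{M_1}$ as a finite direct sum of copies of the canonical module $\omega$, and then quotes the defining Ext-vanishing $\Ext^1_{\widehat R}(\widehat{M_2},\omega)=0$ for the MCM module $\widehat{M_2}$, followed by faithfully flat descent of the vanishing. Both approaches actually prove the stronger statement $\Ext^1_R(M_2,M_1)=0$, not just the splitting of the particular class $\zeta$. Your route is more structural and conceptually transparent once one is willing to use the canonical module and the Foxby equivalence, while the paper's route is more elementary in its toolbox (no canonical module, no completion, no Foxby class) but leans on Striuli's somewhat technical result about classes $\alpha\otimes R/xR$ splitting implying $\alpha\in x\Ext^1$. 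One small remark on your write-up: Proposition~\ref{o3} itself only explicitly records $\tor(M_2)=0$; the simultaneous vanishing of $\tor(M_1)$ and $\tor(M_2)$ is trivial from the hypothesis, as you note, but it is worth keeping in mind that this piece is what makes both $M_1$ and $M_2$ maximal Cohen--Macaulay and hence is indispensable in your reduction.
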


\begin{proof}According to a theorem of Bass, $R$ is Cohen-Macaulay. Also, $\id_R(M_1)=\depth_R(R)=1$.
	From $$\tor(M)=\tor(M_1)\oplus\tor(M_2)$$ we deduce that
	$\tor(M_i)=0$. 
	In particular, if $y$ is a regular element of $R$, it is regular over any of $\{M_1,M_2\}$. Such a thing exists, as $R$ is Cohen-Macaulay.
We repeat this for any powers of $y^n$, and we set $\overline{R}:=\frac{R}{y^n R}$.  It is easy to see $\Tor_1^R( \overline{R},M_2)=\ker (M_2\stackrel{y^n}\lo M_2)=0.$ 
	We apply $-\otimes_R  \overline{R}$ to
	$\zeta$ and obtain $$0=\Tor_1^R(R/y^nR,M_2)\lo M_1\otimes_R  \overline{R}\lo M\otimes_R \overline{R}\lo M_2\otimes_R  \overline{R}\lo 0.$$
	Since $\id_{\overline{R}}(M_2 \otimes_R  \overline{R})= \id_{R}(M_2 )-1=0$, the above sequence splits as an $\overline{R}$-module.
	Recall that $$\Hom_R( M\otimes_R \overline{R}, M_1\otimes_R \overline{R})=\Hom_{\overline{R}}( M\otimes_R \overline{R}, M_1\otimes_R \overline{R}),$$ e.g.,
	the above sequence splits as an $R$-module.  Now, recall  from \cite[Proposition 2.8]{st} that:
	\begin{enumerate}
		\item[Fact]A):  Let $A, B$  be finitely generated, let $x\in R$ be a non-zerodivisor
		on $R$, $A, B$ and let $\alpha:=0\to B\to X_{\alpha}\to A\to 0$ be the short exact sequence.
		Suppose   $\alpha\otimes R/xR$ splits. Then $\alpha\in x \Ext^1_R(A,B)$.
	\end{enumerate}
	In the light of Fact A) we see   $$\zeta\in \bigcap _{n\in\mathbb{N}}y^n \Ext^1_R(M_2,M_1)=0,$$  because of the Krull's intersection theorem. So, $\zeta$ splits.
\end{proof}

\begin{example}
	Let $(R,\fm,k)$ be a 1-dimensional complete local domain which is not Gorenstein. There is an exact sequence$$0\lo M_1\lo M\lo M_2\lo 0$$ of finitely generated modules such that $M_2$ is of finite injective dimension and $$\tor(M)=\tor(M_1)\oplus\tor(M_2),$$ but $M\neq M_1\oplus M_2.$
\end{example}

\begin{proof}
Let $\omega$ be the canonical module.
Since it is not free, the first betti number is not zero.
So,$$\Ext^1_R(\omega, k)^v=\Ext^1_R(\omega, k^v)^v=\Tor^R_1(\omega, k)^{vv}=\Tor^R_1(\omega, k)\neq 0.$$	
Let $\zeta$ be any nonzero element of
$\Ext_R^1(\omega,k)$.
By Baer-Yoneda, there is an $R$-module $K$ such that
	$$\zeta:=0\lo k\lo K\lo \omega\lo 0,$$
	e.g., it does not split. By applying the long exact sequence of local cohomology
modules	we have: $$0\lo\HH^0_{\fm}(k) \lo \HH^0_{\fm}(K) \lo \HH^0_{\fm}(\omega)	=0 .$$In other words,
	 $\tor(K)=k=\tor(k)\oplus\tor(\omega)$.
	 But, $M\neq M_1\oplus M_2,$
	 because $\zeta$ is nonzero.
\end{proof}

\begin{example}
	Let $(R,\fm,k)$ be a 1-dimensional  local domain which is not regular. There is an exact sequence $0\to M_1\to M\to M_2\to 0$ of finitely generated modules such that   $\tor(M)=\tor(M_1)\oplus\tor(M_2),$ but $M\neq M_1\oplus M_2.$
\end{example}

\begin{proof}
	Let $\Omega:=\Syz_1(k)$ and apply the previous proof for $\Omega$ instead of $\omega	$.
\end{proof}
\begin{notation}
	By $\mathcal{Q}(R)$ we mean  the fraction field  of a local domain $R$.
\end{notation}
In Question \ref{1.2} the finitely generated assumption is needed:

\begin{example}
	Let $(R,\fm)$ be a 1-dimensional  local domain which is not complete. There is an exact sequence$$0\lo \mathcal{M}_1\lo \mathcal{M}\lo \mathcal{M}_2\lo 0$$ of  modules  of finite projective dimension such that $$\tor(\mathcal{M})=\tor(\mathcal{M}_1)\oplus\tor(\mathcal{M}_2),$$ but $\mathcal{M}\neq \mathcal{M}_1\oplus \mathcal{M}_2.$
\end{example}

\begin{proof}
	Since $\dim(R)=1$, and
	by a result of Matlis, $\pd(\mathcal{Q}(R))=1$.
	Due to non-complete assumption, and by applying another result of Matlis,
	we know, $\Ext^1_R(\mathcal{Q}(R),R)\neq 0$. By Baer-Yoneda,
	 there is an $R$-module $\mathcal{M}$ such that
	$$\zeta:=0\lo R\lo \mathcal{M}\lo \mathcal{Q}(R)\lo 0,$$
	is exact and not splits. It is easy to see that $\mathcal{M}$ is of finite projective dimension. By applying the long exact sequence of local cohomology
	modules	we have: $$0=\HH^0_{\fm}(R) \lo \HH^0_{\fm}(\mathcal{M}) \lo \HH^0_{\fm}(\mathcal{Q}(R))	=0 .$$In other words,
	$\tor(\mathcal{M})=0=\tor(R)\oplus\tor(\mathcal{Q}(R))$.
	But, $\mathcal{M}\neq R\oplus \mathcal{Q}(R),$
	because $\zeta$ is nonzero.
\end{proof}

In order to present a higher dimensional version of Theorem \ref{1.1}, we borrow some lines from \cite{v}.

\begin{theorem}\label{high}
	Let $(R,\fm)$ be a $d$-dimensional Gorenstein local ring  with $d>0$ and let $\zeta$ be the exact sequence $ 0\to M_1\to M \to M_2\to 0$   of finitely generated modules of projective dimension at most $1$. If $\HH^{d-1}_{\fm}(M)\cong\HH^{d-1}_{\fm}(M_1)\oplus\HH^{d-1}_{\fm}(M_2)$,
	then $\zeta$ splits.
\end{theorem}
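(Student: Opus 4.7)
The plan is induction on $d$, with base case $d=1$ being exactly Theorem \ref{1.1}: Fact \ref{th} identifies $\HH^0_{\fm}(-)$ with $\tor(-)$ over a $1$-dimensional Gorenstein (hence equidimensional Cohen--Macaulay) local ring, so the hypothesis specialises to $\tor(M)=\tor(M_1)\oplus\tor(M_2)$. Fix $d\geq 2$ and assume the statement in dimension $d-1$.

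By the Auslander--Buchsbaum formula, $\depth M_i,\depth M\geq d-1\geq 1$, so prime avoidance on associated primes furnishes $x\in\fm$ that is a nonzerodivisor on each of $R,M_1,M,M_2$ simultaneously. For every $n\geq 1$, set $\overline{R}:=R/x^nR$, a Gorenstein ring of dimension $d-1$. The regularity of $x^n$ on each $M_i$ makes $\overline{\zeta}_n:=\zeta\otimes_R\overline{R}$ a short exact sequence and forces $\pd_{\overline{R}}(M_i/x^nM_i)\leq 1$ by the standard change-of-rings argument (tensor the length-one free resolution of $M_i$ with $\overline{R}$, using $\Tor_1^R(\overline{R},M_i)=0$).

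The key step is transferring the cohomological splitting hypothesis to $\overline{\zeta}_n$. Attached to $0\to M_i\stackrel{x^n}\lo M_i\to M_i/x^nM_i\to 0$, the long exact sequence of $\HH^\bullet_{\fm}$ combined with $\HH^{d-2}_{\fm}(M_i)=0$ (forced by the depth bound) identifies
$$\HH^{d-2}_{\overline{\fm}}(M_i/x^nM_i)\;=\;(0:_{\HH^{d-1}_{\fm}(M_i)}x^n),$$
and the same holds for $M$, using that $\HH^\bullet_{\fm}$ agrees with $\HH^\bullet_{\overline{\fm}}$ on $\overline{R}$-modules. Because $(0:_{-}x^n)$ distributes over direct sums, the assumption $\HH^{d-1}_{\fm}(M)\cong\HH^{d-1}_{\fm}(M_1)\oplus\HH^{d-1}_{\fm}(M_2)$ propagates to
$$\HH^{d-2}_{\overline{\fm}}(M/x^nM)\;\cong\;\HH^{d-2}_{\overline{\fm}}(M_1/x^nM_1)\oplus\HH^{d-2}_{\overline{\fm}}(M_2/x^nM_2),$$
which is precisely the hypothesis of Theorem \ref{high} in dimension $d-1$ applied to $\overline{\zeta}_n$.

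The inductive hypothesis thus splits $\overline{\zeta}_n$ over $\overline{R}$, hence also over $R$, since any $\overline{R}$-linear retraction is automatically $R$-linear. With splittings in hand for every $n$, Observation \ref{o1} applied to $\fa=(x)$ lifts these to a splitting of $\zeta$ itself. The main obstacle is the hypothesis transfer in the third paragraph: it rests on the depth bound collapsing the local cohomology long exact sequence, on the functoriality of local cohomology under the change of base from $R$ to $\overline{R}$, and on the distributivity of $x^n$-annihilators over direct sums; once those three facts are in place, the rest is bookkeeping.
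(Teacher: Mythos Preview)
Your inductive argument is correct, but it is \emph{not} the route the paper takes. The paper gives a direct, non-inductive proof: local duality turns the hypothesis into $\Ext^1_R(M,R)\cong\Ext^1_R(M_1,R)\oplus\Ext^1_R(M_2,R)$; then, because $\pd(N)\leq 1$ makes $\Ext^1_R(N,-)$ right exact for each $N\in\{M,M_1,M_2\}$, Watts' theorem identifies $\Ext^1_R(N,L)\cong\Ext^1_R(N,R)\otimes_R L$, whence $\Ext^1_R(M,M_1)\cong\Ext^1_R(M_1,M_1)\oplus\Ext^1_R(M_2,M_1)$; finally Fact~\ref{csp} forces the natural surjection $\Hom_R(M,M_1)\twoheadrightarrow\Hom_R(M_1,M_1)$, and a preimage of the identity splits $\zeta$. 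Your approach instead consumes Theorem~\ref{1.1} as a black box and descends to it via $R/x^nR$ and Observation~\ref{o1}, transferring the cohomological hypothesis through the long exact sequence (the depth bound $\geq d-1$ killing $\HH^{d-2}_{\fm}$ is exactly what makes that transfer clean). What you gain is that you never touch local duality or Watts' theorem, working entirely with local cohomology and the elementary Observation~\ref{o1}; what the paper's argument buys is self-containment --- it reproves the $d=1$ case rather than assuming it --- and it makes transparent that only the single isomorphism $\Ext^1_R(M,R)\cong\oplus_i\Ext^1_R(M_i,R)$ is really needed.
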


\begin{proof}
	We apply the local duality theorem to see  $\Ext^1_R(M,R)\cong\oplus_i\Ext^1_R(M_i,R)$.
	This yields $$\Ext^1_R(M,R)\otimes_RM_1\cong\oplus_i\Ext^1_R(M_i,R)\otimes_RM_1.$$
Let $\mathcal{C}$
be the family of finitely generated  modules. Let $F:\mathcal{C}\to Ab$ be the functor
defined by 
 $F(L):=\Ext^1_R(M_1,L)$. Since $\pd(M_1)$ is at most $1$,
	this functor is right exact. Also, it  preserves direct sums. By Watts' theorem, $$\Ext^1_R(M_1,L)=F(L)\cong F(R)\otimes_RL=\Ext^1_R(M_1,R)\otimes_RL.$$
	We apply this
	to deduce that  
	$ \Ext^1_R(M_2,M_1)\oplus \Ext^1_R(M_1,M_1)\cong\Ext^1_R(M,M_1)$.
	Thanks to Fact \ref{csp}  the following natural  sequence 
	$$0\lo \Ext^1_R(M_2,M_1)\lo \Ext^1_R(M,M_1)\lo \Ext^1_R(M_1,M_1)\lo 0,$$
	is exact. We apply this along with the long exact sequence of $\Ext$-modules to conclude   $\Hom_R(M,M_1)\to \Hom_R(M_1,M_1) $  is surjective.
	By looking at the preimage of $1\in\Hom_R(M_1,M_1)$,    $\zeta$ splits.
\end{proof}

\begin{corollary}\label{chigh}
	Let $(R,\fm)$ be a $d$-dimensional Gorenstein local ring and let  $\zeta:= 0\to M_1\to M \to M_2\to 0$ be an exact sequence   of finitely generated  modules such that $\HH^{d-1}_{\fm}(M)\cong\HH^{d-1}_{\fm}(M_1)\oplus\HH^{d-1}_{\fm}(M_2)$. If  $M_1$ is free, then $\zeta$ splits.
\end{corollary}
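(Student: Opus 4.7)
The plan is to reduce the splitting of $\zeta$ to the vanishing of its extension class in $\Ext^1_R(M_2,M_1)$, exploit local duality to convert the given cohomological splitting into an abstract isomorphism of $\Ext^1(-,R)$-modules, and then force that vanishing by a surjective-endomorphism argument on a finitely generated Ext-module.

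First I would reduce to the case where $R$ is complete, since the hypothesis on $\zeta$, the freeness of $M_1$, and the desired splitting all descend under $\fm$-adic completion (as in the proof of Observation \ref{o1}). Over a complete Gorenstein ring of dimension $d$, local duality together with Matlis duality converts the abstract isomorphism $\HH^{d-1}_{\fm}(M)\cong\HH^{d-1}_{\fm}(M_1)\oplus\HH^{d-1}_{\fm}(M_2)$ into an abstract $R$-isomorphism $\Ext^1_R(M,R)\cong\Ext^1_R(M_1,R)\oplus\Ext^1_R(M_2,R)$. Since $M_1$ is free we have $\Ext^1_R(M_1,R)=0$, so this collapses to an isomorphism $\Ext^1_R(M,R)\cong\Ext^1_R(M_2,R)$.

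The next step is to write down the long exact sequence of $\Ext_R(-,R)$ associated to $\zeta$. Because $\Ext^i_R(M_1,R)=0$ for all $i>0$, the piece around level one collapses to
\[
\Hom_R(M_1,R)\xrightarrow{\delta'}\Ext^1_R(M_2,R)\xrightarrow{f}\Ext^1_R(M,R)\to 0,
\]
so $f$ is surjective. Composing $f$ with the abstract $R$-isomorphism from the previous paragraph produces a surjective endomorphism of the finitely generated $R$-module $\Ext^1_R(M_2,R)$, and any such endomorphism is automatically an isomorphism. Hence $f$ is injective and exactness forces $\delta'=0$. This is the key technical step; the only subtlety I anticipate is ensuring that the isomorphism coming from local duality is genuinely $R$-linear, so that its composition with $f$ is a legitimate $R$-endomorphism to which the surjective-endomorphism principle applies.

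Finally I would transfer the vanishing of $\delta'$ to the analogous connecting map with coefficients in $M_1$. Writing $M_1\cong R^n$, both $\Hom_R(-,M_1)$ and $\Ext^i_R(-,M_1)$ decompose as $\Hom_R(-,R)^n$ and $\Ext^i_R(-,R)^n$ respectively, so the long exact sequence of $\Ext_R(-,M_1)$ applied to $\zeta$ is the $n$-fold direct sum of the one just used. In particular the connecting map $\delta\colon\Hom_R(M_1,M_1)\to\Ext^1_R(M_2,M_1)$ acts coordinate-wise as $\delta'$, hence vanishes identically. Since the class of $\zeta$ in $\Ext^1_R(M_2,M_1)$ equals $\delta(\id_{M_1})$, that class is zero, which is exactly the statement that $\zeta$ splits.
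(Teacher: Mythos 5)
Your proposal is correct and follows essentially the same route as the paper's proof of Theorem \ref{high}, to which the corollary defers: local duality over the completion converts the cohomological hypothesis into an abstract isomorphism of $\Ext^1_R(-,R)$-modules, the surjective-endomorphism argument on a finitely generated Noetherian module (which is also the engine behind Fact \ref{csp}) forces the map $\Ext^1_R(M_2,R)\to\Ext^1_R(M,R)$ in the truncated long exact sequence to be injective, and the vanishing of the resulting connecting homomorphism then gives the splitting. The only cosmetic difference is that you run the Noetherian trick with $R$-coefficients and transfer to $M_1\cong R^n$ at the very end, whereas the paper tensors the local-duality isomorphism up to $M_1$-coefficients first and argues at the level of $\Ext^1_R(-,M_1)$ — a minor and legitimate rearrangement.
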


\begin{proof}
	This is in the proof of  Theorem \ref{high}.
\end{proof}

\begin{corollary}\label{quasi}
	Let $(R,\fm,k)$ be a $d$-dimensional quasi-Gorenstein local ring and let  $$\zeta:= 0\lo M_1\lo M \lo M_2\lo 0$$ be an exact sequence   of finitely generated reflexive modules such that $\HH^{d-1}_{\fm}(M)=\HH^{d-1}_{\fm}(M_1)\oplus\HH^{d-1}_{\fm}(M_2)$. If  $M_1$ is free, then $\zeta$ splits.
\end{corollary}

\begin{proof} 
	A ring is quasi-Gorenstein if and only if its completion is quasi-Gorenstein.
	Let $L_1:=M$  and $L_2:=M_1\oplus M_2$ be finitely generated modules.   If $L_1\otimes_R\widehat{R}\cong L_2\otimes_R\widehat{R}$, then $L_1\cong L_2$.
According to Fact \ref{csp}, we may and do assume that $R$ is complete.
It follows from the proof of Observation \ref{STAR} that the sequence 
	$$0\lo \HH^d_{\fm}(M_1)\lo \HH^d_{\fm}(M)\lo \HH^d_{\fm}(M_2)\lo 0$$ splits.
		By Grothendieck's vanishing theorem, $\HH^{>d}_{\fm}(-)=0$. So, the functor  $\HH^{d}_{\fm}(-)$ is right exact. Also, it  preserves direct sums.
	  Due to the Watts' theorem, we know that
	 $$\HH^d_{\fm}(-)\cong (-)\otimes_R\HH^d_{\fm}(R) \cong (-)\otimes_RE_R(k).$$
	We combine  this  with the previous observation and conclude that $$\bigoplus_i M_i \otimes_RE_R(k) \cong M  \otimes_RE_R(k).$$ We  apply
	the Matlis functor to see $$\bigoplus_{i=1}^2\Hom_R\left( M_i \otimes_RE_R(k),E_R(k)\right) \cong\Hom_R\left( M \otimes_RE_R(k),E_R(k)\right).$$
Recall that  $\Hom_R(E_R(k),E_R(k))\cong\widehat{R}=R$. In the light  of tensor-hom adjunction we observe $$\oplus_i M_i^\ast\cong \oplus_i
	\Hom_R( M_i ,\Hom_R(E_R(k),E_R(k)) \cong\Hom_R( M ,\Hom_R(E_R(k),E_R(k))\cong M^\ast.$$
	Taking another $(-)^\ast$ yields that $M^{\ast\ast}\cong M_1^{\ast\ast}\oplus M_2^{\ast\ast}$.
		By reflexivity,  $$M\cong M^{\ast\ast}\cong M_1^{\ast\ast}\oplus M_2^{\ast\ast}\cong M_1\oplus M_2.$$  In view of Fact \ref{csp}
	$\zeta$ splits.
\end{proof}

\begin{discussion}\label{ausseq}i)
Let  $P_1\overset{\partial_1}{\rightarrow}P_0 \overset{\partial_0}{\to} M\to0$ be a finite projective presentation of $M$. The \emph{transpose} of $M$ is $\Tr M:=\coker ({\partial_1}^*)$.  There is a useful exact sequence:
$$
\Tor_2^R(\Tr\Syz_nM,N)\rightarrow\Ext^n_R(M,R)\otimes_RN\stackrel{f_n}\longrightarrow\Ext^n_R(M,N)
\rightarrow\Tor_1^R(\Tr\Syz_nM,N)\rightarrow0\quad(\ast)
$$

ii) Concerning Theorem \ref{high}, we present a replacement for Watts' theorem. Since $\pd(M)\leq 1$, $\Syz_1(M)$ is free. In particular, its presentation is given by $0\overset{\partial_1}{\rightarrow}P_0 \overset{\partial_0}{\to} \Syz_1M\to0$. By definition 
$\Tr\Syz_1(M)=0$, and so $$\Tor_2^R(\Tr\Syz_1(M),M_1)=\Tor_1^R(\Tr\Syz_1(M),M_1)=0.$$ In view of $(\ast)$, $\Ext^1_R(M,R)\otimes_RM_1\cong\Ext^1_R(M,M_1)$.

iii) One may control the error term $\coker(f_1)$ in a nontrivial  case. Suppose  $\pd(M)= 2$. Then $$\Tr\Syz_1(M)\cong\Ext^1_R(\Syz_1(M),R)\cong \Ext^2_R(M,R).$$ So,  $\coker(f_1)=\Tor_1^R(\Ext^2_R(M,R),M_1)$.
 \end{discussion}

One may like to conclude
the splitting of a short exact sequence of modules of projective dimension at most one,
from  splitting of the corresponding sequence of torsion modules. This is not the case:

\begin{example}
	Let $(R,\fm)$ be a $2$-dimensional regular local ring and note that $\zeta:=0\to R\to R^2 \to \fm\to 0$ is a short exact sequence of modules of projective dimension at most one such that the corresponding
	sequence of torsions splits, but $\zeta$ is not of splitting type.
\end{example}

Concerning Theorem \ref{high}, the presented bound on projective dimension  is optimal:

\begin{example}
	Let $(R,\fm,k)$ be a $3$-dimensional regular local ring and note that $\zeta:=0\to R\to R^2 \to \fm\to 0$ is a short exact sequence of modules of projective dimension at most two  such that 
$$0=\HH^{d-1}_{\fm}(M)\cong\HH^{d-1}_{\fm}(M_1)\oplus\HH^{d-1}_{\fm}(M_2)=0,$$  but $\zeta$ is not of splitting type. \end{example}\begin{proof}Indeed, we  use  $0\to \fm\to R\to k\to 0$ and the induced long
exact sequence $$0=\HH^{1}_{\fm}(k)\lo \HH^{2}_{\fm}(\fm)\lo \HH^{2}_{\fm}(R)=0$$ to conclude
that $\HH^{d-1}_{\fm}(M_2)=0$. Suppose on the way of contradiction $\zeta$  splits. Then $\fm$ is principal. This implies that  $R$ is 1-dimensional, which is excluded by the assumption.
\end{proof}

Concerning Theorem \ref{high}, one can not replace projective dimension with $G$-dimension:
\begin{example}
	Let $(R,\fm)$ be a $1$-dimensional Gorenstein ring which is not regular. We look at the  exact sequence  $$\zeta:=0\lo \Syz_1(\fm)\lo R^{\beta_0(\fm)} \lo \fm\lo 0$$  of finitely generated  modules of  $G$-dimension  zero such that the corresponding
	sequence of torsions splits. Suppose on the way of contradiction $\zeta$  splits. Then $\fm$ is free. This implies that  $R$ is regular, which is excluded by the assumption.
\end{example}

\begin{fact}\label{mc}(See \cite[Proposition 2.22]{v})
	Let $R$ be a commutative ring and $f:M^m\to M^n$ be a homomorphism of $R$--modules
	given by an $n \times m$ matrix $(a_{ij})$. Then $f$   is injective iff
	the ideal generated by  the minors of order $m$ does not annihilated a nonzero
	element of $M$.
\end{fact}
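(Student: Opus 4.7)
The plan is to prove this by induction on $m$, via the adjugate-matrix argument underlying McCoy's classical theorem transferred to an arbitrary module $M$. Write $I_m$ for the ideal generated by the $m\times m$ minors of $A=(a_{ij})$, with the convention $I_m=0$ when $n<m$. The base case $m=1$ is immediate: the map $f$ sends $y\in M$ to $(a_{11}y,\ldots,a_{n1}y)^T$, so injectivity of $f$ is literally the condition that no nonzero $y$ is killed by every $a_{i1}$.

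For the easier direction I would argue the contrapositive of ``if''. Assume $f(y)=0$ with $y_k\neq 0$. For each $m\times m$ submatrix $B$ of $A$ the equation $Ay=0$ restricts on the rows of $B$ to $By=0$; multiplying on the left by $\operatorname{adj}(B)$ and using $\operatorname{adj}(B)\cdot B=(\det B)I$ gives $(\det B)y=0$, so $\mu y_k=0$ for every $m$-minor $\mu$. Hence $I_m$ annihilates the nonzero element $y_k$.

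For the harder direction (contrapositive of ``only if''), assume $x\in M\setminus\{0\}$ is killed by $I_m$; I would manufacture a nonzero element of $\ker f$. Fix an $m\times m$ submatrix $B$ with $\det B=\mu$ and a column $c=(c_1,\ldots,c_m)^T$ of $\operatorname{adj}(B)$, and set $y_c:=(c_1x,\ldots,c_mx)^T$. A direct computation shows each coordinate $(Ay_c)_r=(Ac)_r\cdot x$, where $(Ac)_r$ equals $\pm\mu$ when $r$ is one of the rows chosen for $B$ (from $B\cdot\operatorname{adj}(B)=\mu I$) and equals the determinant of $B$ with one of its rows replaced by the $r$-th row of $A$ otherwise; in either case $(Ac)_r\in I_m$, so $y_c\in\ker f$. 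If some choice of $B$ and $c$ gives $y_c\neq 0$ we are done. Otherwise every $c_jx=0$; as $B$, the column index, and the row index within the column vary, the entries $c_j$ sweep out (up to sign) every $(m-1)\times(m-1)$ minor of $A$, so $I_{m-1}$ annihilates $x$. Restricting $A$ to any $n\times(m-1)$ column-submatrix $A'$, the ideal of $(m-1)$-minors of $A'$ still annihilates $x$, so by the inductive hypothesis applied to $A'$ there exists $y'\in M^{m-1}\setminus\{0\}$ with $A'y'=0$; extending $y'$ by a zero in the omitted coordinate yields the required element of $\ker f$.

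The main obstacle is the bookkeeping in the inductive step: one must verify that letting $B$ range over all $m\times m$ submatrices of $A$ and the column of $\operatorname{adj}(B)$ vary really does exhaust every $(m-1)\times(m-1)$ minor of $A$, which reduces to a straightforward counting pairing (``row to delete from $B$'', ``column to delete from $B$'') with ($(m-1)$-row subsets of $\{1,\ldots,n\}$, $(m-1)$-column subsets of $\{1,\ldots,m\}$). The degenerate case $n<m$, where $I_m=0$ annihilates everything and a nonzero kernel element must be produced by hand, is absorbed by one more column-descent.
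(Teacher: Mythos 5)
The paper states this result as a Fact with a citation to Vasconcelos and supplies no proof of its own, so there is no in-paper argument to compare against; what follows is an assessment of your argument on its own terms.

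Your adjugate induction is essentially the standard McCoy argument and the core of it is sound. The base case $m=1$, the easy direction via $\operatorname{adj}(B)\cdot B=(\det B)I$, the construction of $y_c$ from a column $c$ of $\operatorname{adj}(B)$, the verification that $(Ac)_r$ is (a sign times) either $\det B$ or the determinant of $B$ with one row replaced by the $r$th row of $A$, hence lies in $I_m$, and the counting that cofactor entries of the $m\times m$ submatrices exhaust all $(m-1)$-minors when $n\geq m$ — all of this is correct.

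The gap is the degenerate case $n<m$, which you explicitly flag and then dismiss with ``absorbed by one more column-descent.'' That dismissal does not hold up. When $n<m$ there are no $m\times m$ submatrices $B$ at all, so the ``if some $y_c\neq 0$'' branch is vacuous, and the ``otherwise'' branch no longer yields $I_{m-1}x=0$: the $(m-1)$-minors of $A$ need not annihilate $x$ just because the (nonexistent) $m$-minors do. And a bare column-descent cannot rescue this: dropping one column produces an $n\times(m-1)$ matrix $A'$ which, by the inductive hypothesis, may very well be \emph{injective} (e.g.\ $n=m-1$ and $A'$ the identity, with $I_{m-1}(A')=R$), so you obtain no kernel element for $A$ from $A'$. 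The clean fix, compatible with your framework, is to pad $A$ with $m-n$ zero rows to get an $m\times m$ matrix $\tilde A$ with $\ker\tilde A=\ker A$ and $I_m(\tilde A)=0$, and then run your $n\geq m$ argument on $\tilde A$; alternatively, run a Cramer-type construction directly on an $(m-1)\times(m-1)$ submatrix whose determinant does not kill $x$, using a spare column of $A$. As written, though, the $n<m$ case is not established, and since that is exactly the case in which $I_m=0$ and the ``only if'' direction has real content, it needs to be fixed rather than waved away.
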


\begin{proposition}\label{p1}
	Let $(R,\fm)$ be a $1$-dimensional Cohen-Macaulay local ring with a canonical
	module $\omega_R$ and let $\alpha:=0\to M_1\to M \to M_2\to 0$ be a short exact sequence of finitely generated modules of projective dimension at most one. The following holds:
	\begin{enumerate}
		\item[a)] The sequence
		$ \zeta:=0\to M_1\otimes\omega_R\to M\otimes\omega_R \to M_2\otimes\omega_R\to 0$ is exact.
		\item[b)] If $\tor(M\otimes\omega_R)=\tor(M_1\otimes\omega_R)\oplus\tor(M_2\otimes\omega_R)$, then $\zeta$ splits.
	\end{enumerate}
\end{proposition}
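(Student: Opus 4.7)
For part~(a), my plan is to read the exactness directly off Fact~\ref{mc}. Each $M_i$ fits in a presentation $0\to R^m\xrightarrow{A}R^n\to M_i\to 0$ (possible since $\pd M_i\le1$); Fact~\ref{mc} says injectivity of $A$ is equivalent to the ideal $I_m(A)$ of maximal minors annihilating no nonzero element of $R$, which in the present one-dimensional Cohen--Macaulay setting forces $I_m(A)$ to contain a non-zerodivisor. Because $\omega_R$ is maximal Cohen--Macaulay, non-zerodivisors of $R$ remain regular on $\omega_R$, so $I_m(A)$ still annihilates no nonzero element of $\omega_R$; Fact~\ref{mc} applied a second time then gives injectivity of $A\otimes\omega_R$. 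In particular $\Tor^R_1(M_2,\omega_R)=0$, which is the exactness of $\zeta$.

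For part~(b) my overall strategy is to convert the torsion hypothesis on $\zeta$ into an $\Ext^1_R(-,R)$ decomposition of the \emph{original} modules, and then replay the Watts-theorem bookkeeping from the proof of Theorem~\ref{high}. First I would pass to $\widehat R$, using Fact~\ref{csp} together with flatness of completion, so that local duality is available. Fact~\ref{th} rewrites the hypothesis as $\HH^0_\fm(M\otimes\omega_R)=\HH^0_\fm(M_1\otimes\omega_R)\oplus\HH^0_\fm(M_2\otimes\omega_R)$, and Matlis-dualising via local duality $\HH^0_\fm(-)\cong\Ext^1_R(-,\omega_R)^v$ yields
\[
\Ext^1_R(M\otimes\omega_R,\omega_R)\ \cong\ \Ext^1_R(M_1\otimes\omega_R,\omega_R)\oplus\Ext^1_R(M_2\otimes\omega_R,\omega_R).
\]

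Next I would identify each $\Ext^1_R(M_i\otimes\omega_R,\omega_R)$ with $\Ext^1_R(M_i,R)$. Picking a projective resolution $P_\bullet\to M_i$ of length $\le1$, part~(a) guarantees that $P_\bullet\otimes\omega_R$ resolves $M_i\otimes\omega_R$ by free $\omega_R$-modules; the rigidity $\Ext^{>0}_R(\omega_R,\omega_R)=0$ of the canonical module makes these terms $\Hom_R(-,\omega_R)$-acyclic, so this resolution can be used to compute $\Ext^*_R(M_i\otimes\omega_R,\omega_R)$. The resulting complex is $\Hom_R(P_\bullet,\Hom_R(\omega_R,\omega_R))=P_\bullet^*$, whose cohomology is $\Ext^*_R(M_i,R)$. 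Substituting, the previous display becomes $\Ext^1_R(M,R)\cong\Ext^1_R(M_1,R)\oplus\Ext^1_R(M_2,R)$.

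At this point I would reprise the proof of Theorem~\ref{high}: Watts' theorem (available because $\pd M_j\le1$) upgrades the decomposition to $\Ext^1_R(M,M_1)\cong\Ext^1_R(M_1,M_1)\oplus\Ext^1_R(M_2,M_1)$, and the long exact $\Hom_R(-,M_1)$-sequence of $\alpha$ (together with $\Ext^2_R(M_2,M_1)=0$) then forces $\Hom_R(M,M_1)\to\Hom_R(M_1,M_1)$ to be surjective; lifting $\id_{M_1}$ splits $\alpha$, and tensoring this splitting with $\omega_R$ splits $\zeta$. The step I expect to be most delicate is the $\Ext$ identification above, which hinges on the rigidity and self-duality properties of the canonical module; everything after that is a faithful replay of Theorem~\ref{high}.
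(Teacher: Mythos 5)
Your part~(a) is essentially the paper's argument, phrased slightly differently: both read injectivity of $A\otimes\omega_R$ off Fact~\ref{mc} by observing that the ideal of maximal minors cannot kill a nonzero element of $\omega_R$; the paper routes this through $\Hom_R(\omega_R,\omega_R)=R$, you route it through $\Ass(\omega_R)=\Ass(R)$ (so a non-zerodivisor on $R$ lying in the minor ideal remains a non-zerodivisor on the maximal Cohen--Macaulay module $\omega_R$). Same content, same conclusion $\Tor_1^R(M_2,\omega_R)=0$.

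Your part~(b) is correct but takes a genuinely different route, and the difference is worth recording. The paper never leaves the ``tensored world'': it applies local duality to $\tor(-\otimes\omega_R)=\HH^0_\fm(-\otimes\omega_R)$, then runs the Watts-theorem argument with the functor $L\mapsto\Ext^1_R(X\otimes\omega_R,L\otimes\omega_R)$ (using $\id_R(L\otimes\omega_R)=1$ for $\pd_RL\le1$ to get right exactness) to obtain $\Ext^1_R(M\otimes\omega_R,M_1\otimes\omega_R)\cong\bigoplus_i\Ext^1_R(M_i\otimes\omega_R,M_1\otimes\omega_R)$, and concludes by lifting $1\in\Hom_R(M_1\otimes\omega_R,M_1\otimes\omega_R)$ that $\zeta$ itself splits. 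You instead un-tensor: the $\Hom$--$\otimes$ adjunction combined with $\Hom_R(\omega_R,\omega_R)=R$ and $\Ext^{>0}_R(\omega_R,\omega_R)=0$ gives the natural identification $\Ext^1_R(M_i\otimes\omega_R,\omega_R)\cong\Ext^1_R(M_i,R)$ (part~(a) is exactly what makes $P_\bullet\otimes\omega_R$ a resolution), which converts the hypothesis into $\Ext^1_R(M,R)\cong\bigoplus_i\Ext^1_R(M_i,R)$, and from there you replay Theorem~\ref{high} verbatim to split the \emph{original} sequence $\alpha$, obtaining $\zeta$ splits as a corollary by applying $-\otimes\omega_R$. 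Both routes rest on Watts' theorem and the Miyata-type criterion Fact~\ref{csp} (here the relevant $\Ext^1$-modules are of finite length, which is what makes the injectivity-at-the-left step honest), so the cost is comparable; what your version buys is the strictly stronger conclusion that $\alpha$ already splits, which is arguably the more natural statement and makes the deduction of the claim about $\zeta$ immediate. One small caution: when you assert that the long exact sequence ``forces'' $\Hom_R(M,M_1)\to\Hom_R(M_1,M_1)$ to be surjective, you should say explicitly (as the paper implicitly does) that this uses the decomposition of $\Ext^1_R(M,M_1)$ plus the finite-length count to get injectivity of $\Ext^1_R(M_2,M_1)\to\Ext^1_R(M,M_1)$; the vanishing $\Ext^2_R(M_2,M_1)=0$ alone gives only exactness on the right.
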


\begin{proof}
	$a)$:
	Let $\xi:=0\to R^m\stackrel{f}\lo R^n\to M_2\to 0$ be a free resolution of $M_2$. Note that $f$ is represents by an $n \times m$ matrix $(a_{ij})$. Let $I$ be the ideal generated by  the minors of order $m$.  In the light of Fact \ref{mc}, $I$
	does not annihilated a nonzero
	element of $R$. Tensor $\xi$ with $\omega_R$ we have the following
	exact sequence	$$ 0\lo\Tor_1^R(M_2,\omega_R)\lo R^m\otimes\omega_R\stackrel{f\otimes1}\lo R^n\otimes\omega_R \lo M_2\otimes\omega_R\lo 0\quad(+)$$ The representing matrix of $f\otimes1$ is given by  $(a_{ij})$.
	Suppose on the way of contradiction that $I$ is  annihilated by a nonzero
	element of $\omega_R$. This in turns imply that $I$  annihilated a nonzero
	element of $\Hom_R(\omega_R,\omega_R)$. Since
	$\Hom_R(\omega_R,\omega_R)=R$ is faithful, we get to a contradiction. Thus,  $I$ is not  annihilated a nonzero
	element of $\omega_R$.  Thanks to Fact \ref{mc}, $f\otimes1$ is injective. In view of $(+)$ we see $\Tor_1^R(M_2,\omega_R)=0$. Tensor  $\alpha$ with $\omega_R$ we have the following
	exact sequence	$$ 0=\Tor_1^R(M_2,\omega_R)\lo M_1\otimes\omega_R\lo M\otimes\omega_R \lo M_2\otimes\omega_R\lo 0,$$ which yields part $a)$.
	
	b): Since $R$ is one-dimensional, and in view of Fact \ref{th}, $\HH^0_{\fm}(-)=\tor(-)$.
	We use this along with our assumption to see  $\HH^0_{\fm}(M\otimes \omega_R)\cong\oplus_i\HH^0_{\fm}(M_i\otimes \omega_R).$ 
	We apply the local
	duality theorem to see  $\Ext^1_R(M\otimes \omega_R,\omega_R)\cong\oplus_i\Ext^1_R(M_i\otimes \omega_R,\omega_R).$
	Tensor this  with  $M_2$, we obtain  $$\Ext^1_R(M\otimes \omega_R,\omega_R)\otimes_R M_1  \cong \oplus_i\Ext^1_R(M_i\otimes \omega_R,\omega_R)\otimes_RM_1.$$ Let $\mathcal{C}$
	be the family of modules of projective dimension at most one. Let $F:\mathcal{C}\to Ab$ be the functor
	defined by 
	$$F(L):=\Ext^1_R(M_2\otimes\omega_R,L\otimes\omega_R).$$
	Let $0\to R^m\to R^n\to L\to 0$ be a free resolution of $L$. We observed in part $a)$
	that  $$ 0 \lo R^m\otimes\omega_R \lo R^n\otimes\omega_R \lo L\otimes\omega_R\lo 0 $$
	is exact. Recall that $\id_R(\oplus\omega_R)<\infty$. From this, $\id_R(L\otimes\omega_R)<\infty$, and so $$\id_R(L\otimes\omega_R)=\depth_R(R)=1.$$
	This implies that the functor $F$ is right exact.  Also, it  preserves direct sums. In the light of Watts' theorem we see $$\Ext^1_R(M_1\otimes\omega_R,L\otimes\omega_R)=F(L)\cong F(R)\otimes_RL=\Ext^1_R(M_2\otimes\omega_R,\omega_R)\otimes_RL.$$  We apply this along with Fact \ref{csp}
	to deduce that the following natural sequence 
	$$0\lo \Ext^1_R(M_2\otimes\omega_R,M_1\otimes\omega_R)\lo \Ext^1_R(M\otimes\omega_R,M_1\otimes\omega_R)\lo \Ext^1_R(M_1\otimes\omega_R,M_1\otimes\omega_R)\lo 0,$$
	is exact. This shows that  $$\Hom_R(M\otimes\omega_R,M_1\otimes\omega_R)\twoheadrightarrow \Hom_R(M_1\otimes\omega_R,M_1\otimes\omega_R) \lo 0$$  is surjective.
	By looking at the preimage of $1\in\Hom_R(M_1\otimes\omega_R,M_1\otimes\omega_R)$ we deduce  $$ M\otimes\omega_R \cong\bigoplus _{i=1}^2 (M_i\otimes\omega_R),$$  and we get the desired claim.
\end{proof}

\section{Subfunctors of cohomology}

We start with

\begin{proposition}\label{ptri}
	Let $(R,\fm)$ be a $d$-dimensional Cohen-Macaulay complete local ring and let $\mathcal{F}$ be an additive functor
	which is a direct summand of $\HH^d_{\fm}(-)$ and  preserves direct sums. Then $\mathcal{F}$ is trivial.
\end{proposition}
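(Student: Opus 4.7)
The plan is to apply Watts' theorem together with local duality, in the same spirit as the argument used in the proof of Corollary \ref{quasi}. First, by Grothendieck's vanishing $\HH^{d+1}_{\fm}(-)=0$, so the long exact sequence of local cohomology forces $\HH^d_{\fm}(-)$ to be right exact. A direct summand of a right exact additive functor is itself right exact, because for any exact sequence the long exact sequence splits as a direct sum along the idempotent decomposition, and a direct sum of sequences is exact precisely when each component is. Hence $\mathcal{F}$ is right exact, and since it also preserves direct sums by hypothesis, Watts' theorem yields a natural isomorphism $\mathcal{F}(-)\cong -\otimes_R\mathcal{F}(R)$.

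Next I would evaluate the natural summand inclusion at $M=R$, producing a split monomorphism $\mathcal{F}(R)\hookrightarrow\HH^d_{\fm}(R)$ of $R$-modules; it will then suffice to verify that $\HH^d_{\fm}(R)$ is indecomposable. Since $R$ is complete Cohen-Macaulay it admits a canonical module $\omega_R$, and local duality gives $\HH^d_{\fm}(R)\cong\omega_R^v$. The standard identity $\End_R(\omega_R)\cong R$, combined with the absence of nontrivial idempotents in the local ring $R$, forces $\omega_R$ to be indecomposable; Matlis duality, a contravariant equivalence between finitely generated and Artinian $R$-modules under the completeness hypothesis, then transports this indecomposability to $\omega_R^v$. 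Therefore either $\mathcal{F}(R)=0$, in which case $\mathcal{F}(-)\cong -\otimes_R 0=0$, or $\mathcal{F}(R)=\HH^d_{\fm}(R)$, in which case applying Watts' theorem to $\HH^d_{\fm}(-)$ itself gives $\HH^d_{\fm}(-)\cong -\otimes_R\HH^d_{\fm}(R)\cong\mathcal{F}(-)$. Either way $\mathcal{F}$ is trivial.

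The main obstacle I anticipate is the indecomposability step beyond the Gorenstein case, where one cannot invoke the quick identification $\HH^d_{\fm}(R)\cong E_R(k)$; the argument must route through $\End_R(\omega_R)\cong R$ and Matlis duality, and it is precisely here that the completeness and Cohen-Macaulay hypotheses are doing the real work.
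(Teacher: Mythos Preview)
Your proof is correct and follows essentially the same approach as the paper: Grothendieck vanishing gives right exactness, Watts' theorem reduces to $\mathcal{F}(R)$, and indecomposability of $\HH^d_{\fm}(R)$ is obtained via local duality and the indecomposability of $\omega_R$ under Matlis duality. You supply a bit more detail (e.g.\ explaining why a direct summand of a right exact functor is right exact, and invoking $\End_R(\omega_R)\cong R$ to justify indecomposability of $\omega_R$), but the route is the same.
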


\begin{proof}
	By Grothendieck's vanishing theorem, $\mathcal{F}$ is right exact.  According to the Watts' theorem, $$\mathcal{F}(\mathcal{L})=F(R)\otimes_R\mathcal{L}.$$
In the light of Matlis theory, $\HH^d_{\fm}(R)$ is indecomposable if and only if $\HH^d_{\fm}(R)^v$
	 is indecomposable. We apply the local duality theorem  to deduce that $\HH^d_{\fm}(R)^v=\omega_R$ which is indecomposable.
	Note that
	$\mathcal{F}(R)$ is a direct  summand of $\HH^d_{\fm}(R)$.
	Since  $\HH^d_{\fm}(R)$ is indecomposable, either $\mathcal{F}(R)=0$ or $\mathcal{F}(R)=\HH^d_{\fm}(R)$.
	We plug this in the previous observation to see either $\mathcal{F}(\mathcal{L})=0$ or   $$\mathcal{F}(\mathcal{L})=\mathcal{F}(R)\otimes_R\mathcal{L}=\HH^d_{\fm}(R)\otimes_R\mathcal{L}\cong \HH^d_{\fm}(\mathcal{L}),$$ i.e., $\mathcal{F}=0$ or $\mathcal{F}(-)=\HH^d_{\fm}(-)$. 
\end{proof}

The Cohen-Macaulay assumption is important:

\begin{example}
	Let $R:=\frac{\mathbb{Q}[[x_1,\ldots,x_4]]}{(x_1,x_2)\cap(x_3,x_4)}$. Then $\HH^2_{(x_1,x_2)}(-)\oplus\HH^2_{(x_3,x_4)}(-)\cong \HH^2_{\fm}(-)$ is a nontrivial
	decomposition of functors.
\end{example}

\begin{proof}
	Recall that $\HH^i_{0}(-)=0$ for all $i>0$.	By Mayer–Vietoris sequence, we have
	$$0=\HH^1_{(x_1,x_2)\cap(x_3,x_4)}(-) \to \HH^2_{(x_1,x_2)}(-)\oplus\HH^2_{(x_3,x_4)}(-)\to \HH^2_{\fm}(-)\to\HH^2_{(x_1,x_2)\cap(x_3,x_4)}(-)=0.$$Thus,
$$\HH^2_{\fm}(-)\cong	\HH^2_{(x_1,x_2)}(-)\oplus\HH^2_{(x_3,x_4)}(-).$$
	Due to Lichtenbaum–Hartshorne vanishing
	theorem, both of $\HH^2_{(x_1,x_2)}(-)$ and $\HH^2_{(x_3,x_4)}(-)$ are nonzero.
\end{proof}

Concerning Proposition  \ref{ptri}, the direct summand assumption is needed: 

\begin{example}
Let $(R,\fm)$ be a $d$-dimensional regular ring and let $\mathcal{F}(-):=\Ext^d_R(R/\fm,-)$. We left to the reader to check that $\mathcal{F}(-)\hookrightarrow  \HH^d_{\fm}(-)$ and that $\mathcal{F}$ is nontrivial.
\end{example}

Finding direct summand of $\HH^{\ast}_{\fm}(-)$  inspired from \cite{comment}, and has the following applications:

\begin{fact}(See \cite[Claim 4.1.A]{a})\label{a}
		Let $M$ be a finitely generated module over any commutative ring and let $\mathcal{L}$ be any module. Then $\Ext^i_R(M,\mathcal{L})$ has no nonzero projective submodule for all $i>0$.
\end{fact}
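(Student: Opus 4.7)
The plan is to argue by contradiction. Suppose $P \subseteq \Ext^i_R(M,\mathcal{L})$ is a nonzero projective submodule with $i > 0$ and $M$ finitely generated. First I would reduce to $i=1$ by dimension shifting: from a finite free presentation $0 \to \Syz_1 M \to F_0 \to M \to 0$, one has $\Ext^j(M,\mathcal{L}) \cong \Ext^{j-1}(\Syz_1 M,\mathcal{L})$ for $j \geq 2$, with $\Syz_1 M$ still finitely generated, so iteration brings us to $i=1$. Next I would localize at a prime $\fp$ where $P_\fp \neq 0$; the hypothesis that $M$ is finitely generated gives $P_\fp \hookrightarrow \Ext^1_{R_\fp}(M_\fp,\mathcal{L}_\fp)$, and Kaplansky's theorem makes $P_\fp$ free over the local ring $R_\fp$, so it contains a copy of $R_\fp$. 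The question thus reduces to ruling out any inclusion $R \hookrightarrow \Ext^1(M,\mathcal{L})$ with $R$ local, equivalently, to showing that every nonzero element of $\Ext^1(M,\mathcal{L})$ has nonzero annihilator.

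For the main argument I would apply the natural exact sequence $(\ast)$ of Discussion \ref{ausseq} with $n=1$:
$$\Tor_2^R(\Tr M,\mathcal{L}) \lo \Ext^1(M,R)\otimes_R \mathcal{L} \stackrel{f_1}{\lo} \Ext^1(M,\mathcal{L}) \lo \Tor_1^R(\Tr M,\mathcal{L}) \lo 0,$$
which filters $\Ext^1(M,\mathcal{L})$ as an extension of $\im(f_1)$, itself a quotient of $\Ext^1(M,R)\otimes_R \mathcal{L}$, by $\Tor_1^R(\Tr M,\mathcal{L})$. The key input is the classical Auslander--Bridger identification $\Ext^1(M,R) \cong \ker(\Tr M \to (\Tr M)^{\ast\ast})$: for any nonzero finitely generated projective submodule $Q$ of $\Ext^1(M,R)$, the composite $Q \hookrightarrow \Tr M \to (\Tr M)^{\ast\ast}$ vanishes, but by naturality of the evaluation map it factors as $Q \cong Q^{\ast\ast} \to (\Tr M)^{\ast\ast}$, forcing $Q^{\ast\ast} \to (\Tr M)^{\ast\ast}$ to be zero; since $\Tr M$ is finitely generated, localizing to the minimal primes of $R$ shows $\ker(\Tr M \to (\Tr M)^{\ast\ast})$ is annihilated by a non-zerodivisor, contradicting $Q\neq 0$. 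Tensoring with $\mathcal{L}$ preserves this annihilation, so $\im(f_1)$ contains no element with trivial annihilator.

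The main obstacle is the quotient layer $\Tor_1^R(\Tr M,\mathcal{L})$ together with the combination step, since an extension of two modules without free submodules can still contain a free submodule, so a naive filtration argument is not enough. I would bridge the gap by analyzing the putative inclusion $R \hookrightarrow \Ext^1(M,\mathcal{L})$ directly: if the image of $1$ projects to a nonzero element $\overline{\alpha}$ of $\Tor_1^R(\Tr M,\mathcal{L})$, a parallel minimal-prime torsion analysis, using that $\Tr M$ is finitely presented and hence flat at the generic points where it becomes a free localization, produces a nonzero annihilator of $\overline{\alpha}$, contradicting the triviality of $\Ann(\alpha)$; if the projection vanishes, then $\alpha$ lifts through $f_1$ to an element of $\Ext^1(M,R)\otimes_R \mathcal{L}$, where the tensor-layer argument applies. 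The delicate content is verifying compatibility of annihilators under the map $f_1$ and the connecting map onto $\Tor_1$; this relies on the presentation-level description of $f_1$ recalled in Discussion \ref{ausseq} and a careful diagram chase keeping track of how torsion propagates across the two layers.
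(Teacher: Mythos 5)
Your reductions to $i=1$ and, via Kaplansky, to the statement that no element of $\Ext^1_R(M,\mathcal{L})$ over a local ring has zero annihilator, are fine. The core argument built on the Auslander--Bridger sequence, however, has two genuine gaps. (1) The claim that $\Ext^1_R(M,R)\cong\ker\bigl(\Tr M\to(\Tr M)^{\ast\ast}\bigr)$ is annihilated by a non-zerodivisor fails outside the reduced case: the statement is asserted over arbitrary commutative rings, and if $R_\fq$ is an artinian non-Gorenstein local ring at some minimal prime $\fq$ (take $R=k[x,y]/(x,y)^2$, $M=k$), then $\Ext^1(M,R)$ is nonzero there while the only non-zerodivisors of $R$ are units; ``localizing to the minimal primes'' does not produce a non-zerodivisor annihilator, and likewise $\Tr M$ need not be free at minimal primes when $R$ is not reduced. (2) The extension problem you flag is real, and the proposed resolution is backwards: if $\overline{\alpha}$ denotes the image of $\alpha$ in $\Tor_1^R(\Tr M,\mathcal{L})$, then $\Ann(\alpha)\subseteq\Ann(\overline{\alpha})$, so exhibiting a nonzero annihilator of $\overline{\alpha}$ is \emph{no} contradiction with $\Ann(\alpha)=0$; it only tells you $r\alpha\in\im(f_1)$, not that $r\alpha=0$. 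So the ``diagram chase keeping track of how torsion propagates'' does not close the gap as stated.

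There is a shorter route that avoids the transpose entirely and covers non-reduced rings. After reducing to $i=1$, choose any $\fp$ with $P_\fp\neq 0$ and then a minimal prime $\fq\subseteq\fp$; since $P_\fp$ is nonzero free over $R_\fp$, also $P_\fq\neq 0$, and localizing (using that $M$ is finitely presented) gives $R_\fq\hookrightarrow\Ext^1_{R_\fq}(M_\fq,\mathcal{L}_\fq)$ with $(R_\fq,\fn)$ artinian local. Take a minimal free presentation over $R_\fq$, so $K:=\Syz_1(M_\fq)\subseteq\fn F_0$ and $\Ext^1_{R_\fq}(M_\fq,\mathcal{L}_\fq)=\coker\bigl(\Hom(F_0,\mathcal{L}_\fq)\to\Hom(K,\mathcal{L}_\fq)\bigr)$. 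For $s\in\Soc(R_\fq)=(0:\fn)$ and any $\phi\in\Hom(K,\mathcal{L}_\fq)$ one has $s\phi(k)=\phi(sk)=0$, since $sK\subseteq s\fn F_0=0$; thus $\Soc(R_\fq)$ annihilates the whole $\Ext^1$, hence annihilates the embedded copy of $R_\fq$, forcing $\Soc(R_\fq)=0$. An artinian local ring with zero socle is a field, over which $\Ext^1=0$, a contradiction. (The paper cites this fact to \cite[Claim 4.1.A]{a} rather than proving it here, so I cannot compare line-by-line, but the minimal-resolution/socle argument is the standard and essentially elementary one; your Auslander--Bridger route, even if repaired, is considerably heavier.)
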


\begin{proposition}\label{idc}
	Let $(R,\fm)$ be a  complete  Gorenstein  local ring and $M$ be finitely generated. Suppose $\HH^i_{\fm}(M)$
	is nonzero and injective. Then	$i=\dim R$.	In particular, the following are equivalent:
	\begin{enumerate}
		\item[a)]  $M$  is  $(S_2)$,
		\item[b)] $M$ is free,
			\item[c)] $M$ is Cohen-Macaulay.
	\end{enumerate}
\end{proposition}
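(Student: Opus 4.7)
The plan is to first establish $i=\dim R$ via local duality and then derive the equivalences (a)–(c) from this information. Since $M$ is finitely generated, $\HH^i_{\fm}(M)$ is Artinian; being Artinian and injective over the complete local ring $(R,\fm)$, its only associated prime is $\fm$, so it decomposes as a finite direct sum $E_R(k)^n$ with $n\geq 1$. Applying local duality over the complete Gorenstein ring gives $\Ext^{d-i}_R(M,R)\cong \HH^i_{\fm}(M)^v\cong R^n$, a nonzero free module. But Fact \ref{a} prohibits any nonzero projective submodule of $\Ext^j_R(M,R)$ when $j>0$; the only escape is $d-i=0$, that is, $i=\dim R$.

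For the three equivalences, (b) $\Rightarrow$ (c) and (c) $\Rightarrow$ (a) are elementary: free modules over the Cohen-Macaulay ring $R$ are Cohen-Macaulay, and Cohen-Macaulay modules satisfy Serre's condition $(S_n)$ for every $n$. The content is in (a) $\Rightarrow$ (b). By the main statement, the hypothesis forces $\HH^d_{\fm}(M)\cong E_R(k)^n$, and local duality then supplies $M^\ast\cong R^n$, whence $M^{\ast\ast}\cong R^n$. What remains is to identify $M$ with its bidual.

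The central obstacle is the reflexivity step. I would invoke the classical result that over a Gorenstein local ring, a finitely generated module is $(S_2)$ if and only if the evaluation map $M\to M^{\ast\ast}$ is an isomorphism. Granted this, $M\cong M^{\ast\ast}\cong R^n$, so $M$ is free and the cycle (b) $\Rightarrow$ (c) $\Rightarrow$ (a) $\Rightarrow$ (b) closes. A self-contained fallback, if one wishes to avoid quoting this characterisation, is to analyse the kernel $K$ and cokernel $C$ of $M\to M^{\ast\ast}$ using the depth bounds from $(S_2)$ and local duality to show $\HH^i_{\fm}(K)$ and $\HH^i_{\fm}(C)$ vanish in the relevant low degrees, and then to conclude $K=C=0$ from the long exact sequences of local cohomology.
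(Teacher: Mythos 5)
Your proof is correct and follows essentially the same route as the paper's. For the first claim, both arguments go through local duality and Fact \ref{a} (no nonzero projective submodule of $\Ext^{j}_R(M,R)$ for $j>0$) to force $d-i=0$; you merely make the Matlis-duality step explicit by identifying $\HH^i_{\fm}(M)\cong E_R(k)^n$ and hence $\Ext^{d-i}_R(M,R)\cong R^n$, where the paper simply records that this Ext is free — the same computation. For (a)$\Rightarrow$(b) both proofs deduce $M^\ast\cong R^n$ from local duality and then pass to $M\cong M^{\ast\ast}$ via the Evans–Griffith reflexivity criterion (the paper cites \cite[Theorem 3.6]{Syz} for exactly the implication you invoke), so your ``fallback'' depth-chase is unnecessary. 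One small note of care: state the criterion in the direction actually used, namely that a finitely generated module satisfying Serre's condition $(S_2)$ over a Gorenstein local ring is reflexive; only that implication is needed, and whether the converse holds verbatim depends on which of the competing conventions for $(S_2)$ one adopts.
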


\begin{proof}
Let $d:=\dim R$. First, assume that $d-i>0$. By local duality,
$\Ext^{d-i}_R(M,R)$ is free. According to Fact \ref{a},  this is possible only if $\Ext^{d-i}_R(M,R)=0$.
In dual words, $\HH^i_{\fm}(M)=0$. But, this case excluded from the assumptions.
Hence,  $i=d$. Now, we prove  the particular case.
The only nontrivial implication is $a)\Rightarrow b)$.
Thus,  we assume that $M$ is $(S_2)$.
Recall that
$\HH^d_{\fm}(M)$
is injective.
In view of   Matlis  duality, $M^{\ast}$
is free. So,  $M^{\ast\ast}$ is as well. Since $M$ is $(S_2)$ it follows from \cite[Theorem 3.6]{Syz} that $M$ is reflexive.
Therefore, $M\cong M^{\ast\ast}$ is free.
\end{proof}

The $(S_2)$-condition is not a consequence of  the injectivity of the nonzero module $\HH^i_{\fm}(M)$. 
\begin{example}	Let $(R,\fm,k)$ be a quasi-Gorenstein ring of  dimension $d\geq2$. In view
	of $0\to \fm\to R\to k\to 0$ we see $\HH^d_{\fm}(\fm)=\HH^d_{\fm}(R)=E_R(k)$ is injective, but  $\fm$ is not free. As another example,
	look at the top local cohomology of $M:=R\oplus k$.
\end{example}

We conclude the following  easy facts from Proposition \ref{idc}.
\begin{corollary}\label{holo}
	Let $R:=k[[x_1,\ldots,x_d]]$ where $k$ is a field of zero characteristic  and let $M$ be a  finitely generated $R$-module.  Then $M$ is holonomic (or, more generally a $\Der(R,k)$-module) if and only if $M$  is free as an  $R$-module. 
\end{corollary}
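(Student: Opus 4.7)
The plan is to reduce everything to Proposition~\ref{idc} by upgrading the $\Der(R,k)$-structure from $M$ to its local cohomology modules. The reverse direction is immediate: any free module $M\cong R^n$ carries the component-wise $\Der(R,k)$-action, and $R$ itself is holonomic. So I focus on the forward implication, assuming $M$ is finitely generated with a compatible $\Der(R,k)$-action (i.e., a Leibniz-rule action of each $\partial_j=\partial/\partial x_j$).

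The heart of the argument, and the main obstacle, is to show that each $N:=\HH^i_{\fm}(M)$ is an injective $R$-module. First I transport the $\Der(R,k)$-structure to $\HH^i_{\fm}(M)$ via the \v{C}ech complex on $x_1,\dots,x_d$: each derivation extends to the localizations $M_{x_j}$ by the quotient rule and descends to cohomology. By Grothendieck, $N$ is artinian and $\fm$-torsion. Pick any nonzero $n\in(0:_N\fm)$; then $[\partial_j,x_i]=\delta_{ij}$ combined with $x_in=0$ yields $x_i\partial^{\alpha}n=-\alpha_i\partial^{\alpha-e_i}n$, and in characteristic zero each integer $\alpha_i$ is invertible, so the $\Der(R,k)$-submodule generated by $n$ is isomorphic to $E_R(k)$. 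Since $E_R(k)$ is $\Der(R,k)$-simple in characteristic zero (Kashiwara) and $N$ is artinian, induction on length decomposes $N$ as a finite direct sum of copies of $E_R(k)$, so $N$ is injective.

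With each $\HH^i_{\fm}(M)$ injective, Proposition~\ref{idc} forces $\HH^i_{\fm}(M)=0$ whenever $i\neq d=\dim R$. Therefore $\depth_R M=d$, so $M$ is Cohen--Macaulay, and the Auslander--Buchsbaum formula gives $\pd_R M=0$, i.e.\ $M$ is free. The delicate point is the Kashiwara-style socle argument; its reliance on the invertibility of positive integers is exactly why the prime characteristic version must be handled separately.
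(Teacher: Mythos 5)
Your proof is correct and reaches the same endgame (injectivity of local cohomology $\Rightarrow$ Proposition~\ref{idc} $\Rightarrow$ maximal Cohen--Macaulay $\Rightarrow$ free via Auslander--Buchsbaum), but it takes a genuinely different route to the key input. The paper picks out the single module $\HH^r_{\fm}(M)$ at $r=\depth_R M$ and obtains its injectivity by quoting Lyubeznik's finiteness theorem for $D$-modules \cite{lyu}: the injective dimension of $\HH^r_{\fm}(M)$ is bounded by its Krull dimension, which is zero because the module is $\fm$-torsion. You instead re-derive this fact from scratch by transporting the $\Der(R,k)$-structure to the \v{C}ech cohomology and running a Kashiwara-type argument: a socle element of the artinian $\fm$-torsion module generates a $D$-submodule that is a quotient of $E_R(k)$, and since $E_R(k)$ is $D$-simple in characteristic zero, that submodule is a copy of $E_R(k)$, whence (using that the socle is finite dimensional) the whole module is a finite direct sum of copies of $E_R(k)$. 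In effect you are reproving the relevant special case of Lyubeznik's theorem rather than citing it, which makes your argument more self-contained but longer, while the paper's citation is shorter and also handles the general $\HH^i_{\fa}(M)$ case without extra work. Your ``induction on length'' sentence is a touch imprecise (it is really Kashiwara's equivalence for modules supported at the closed point, or equivalently the socle-dimension count, that gives the finite direct sum), but the underlying reasoning is sound. Your final observation about invertibility of integers being the obstruction in characteristic $p$ is exactly the reason the paper treats Corollary~\ref{PRIME} separately via $\mathcal F$-modules and \cite{lyu2}.
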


\begin{proof}
	Without loss of the generality we may assume that $M$ is nonzero.
	Suppose  $M$ is holonomic. Let $r:=\depth_R(M)$. Recall that  $\HH^r_{\fm}(M)\neq 0$. Due to  a result of Lyubeznik \cite{lyu} we know
$$\id(\HH^r_{\fm}(M))\leq \dim(\HH^r_{\fm}(M))=0.$$ In the light of Proposition \ref{idc} we see $r=\dim R$.	By definition,  $M$  is  maximal Cohen-Macaulay. We conclude from  Auslander-Buchsbaum formula that $M$ is free. The reverse implication is always true.
\end{proof}

\begin{corollary}\label{PRIME}
	Let $R$ be a regular ring  of  prime characteristic,  and let $M$ be a  finitely generated $R$-module.  Then $M$ is $\mathcal{F}$-module  if and only if $M$  is projective as an  $R$-module. 
\end{corollary}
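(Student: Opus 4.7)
The plan is to imitate the strategy of Corollary \ref{holo}, replacing the holonomic input by the basic structural results on Lyubeznik's $\mathcal{F}$-modules. The reverse implication is routine: a finitely generated projective $R$-module carries a canonical $\mathcal{F}$-module structure since Frobenius is flat on the regular ring $R$, so the Frobenius pullback $F^{*}P$ is naturally isomorphic to $P$. The content is in the forward direction, and I would proceed as follows.

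First I would reduce to the complete local case by a standard faithfully flat descent. Projectivity of a finitely generated module over a Noetherian ring can be tested after localizing at each maximal ideal, and freeness at a local ring can be tested after completion. So fix a maximal ideal $\fm$ with $M_\fm\neq 0$, and replace $(R,M)$ by $(\widehat{R_\fm}, \widehat{M_\fm})$; this is still a regular complete (hence Gorenstein) local ring of prime characteristic carrying an $\mathcal{F}$-module structure, because the category of $\mathcal{F}$-modules is closed under localization and completion at a maximal ideal (this is part of Lyubeznik's foundations).

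Next, set $r:=\depth_R(M)$, so that $\HH^r_{\fm}(M)\neq 0$ by definition. The key inputs are: (i) local cohomology preserves the $\mathcal{F}$-module structure, so $\HH^r_{\fm}(M)$ is itself an $\mathcal{F}$-module; and (ii) Lyubeznik's theorem that for any $\mathcal{F}$-module $N$, one has
\[
\id_R(N)\;\leq\;\dim\Supp_R(N).
\]
Applied to $N=\HH^r_{\fm}(M)$, which is $\fm$-torsion and hence has zero-dimensional support, this forces $\HH^r_{\fm}(M)$ to be injective. Since $R$ is complete Gorenstein, Proposition \ref{idc} now applies verbatim, giving $r=\dim R$, i.e., $M$ is maximal Cohen--Macaulay. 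Over the regular local ring $R$, the Auslander--Buchsbaum formula yields $\pd_R(M)=\depth R-\depth M=0$, so $M$ is free. Undoing the reduction, $M_\fm$ is free over $R_\fm$ for every maximal ideal, hence $M$ is projective over $R$.

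The only point that needs care is step two: that an $\mathcal{F}$-module structure on $M$ induces one on each local cohomology $\HH^i_\fm(M)$, and that this passes through localization and completion. The first is a standard consequence of the fact that the Frobenius functor commutes with local cohomology when the ring is regular (equivalently, when Frobenius is flat), so that the iso $\theta:M\to F^*M$ induces an iso $\HH^i_\fm(M)\to F^*\HH^i_\fm(M)$; the second is the compatibility of $F^*$ with the flat base changes $R\to R_\fm$ and $R_\fm\to\widehat{R_\fm}$. Once these bookkeeping issues are dispatched, the argument is entirely parallel to Corollary \ref{holo}, with Lyubeznik's injective-dimension bound for $\mathcal{F}$-modules playing the role played there by the analogous bound for holonomic $\Der(R,k)$-modules.
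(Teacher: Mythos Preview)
Your proof is correct and follows essentially the same route as the paper: reduce to the local case, use Lyubeznik's injective-dimension bound for $\mathcal{F}$-modules to force $\HH^r_{\fm}(M)$ to be injective, apply Proposition~\ref{idc} to get $r=\dim R$, and conclude by Auslander--Buchsbaum. The paper's own proof is a one-liner (``assume $R$ is local, then argue as in Corollary~\ref{holo}''); your version supplies the details the paper suppresses, in particular the passage to the completion (which Proposition~\ref{idc} actually requires) and the verification that the $\mathcal{F}$-module structure survives localization, completion, and local cohomology.
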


\begin{proof}
	Without loss of the generality we may assume that $R$ is local.
	Along the same lines as Corollary \ref{holo} we get the desired claim.
\end{proof}

\begin{remark}
We leave to the reader to formulate  Proposition \ref{idc} in the setting of Cohen-Macaulay rings.
\end{remark}

\section{Cohomological finiteness}

Recall that a module $\mathcal{M}$ is called cohomologically finite  if
$\Ext^i_R(\mathcal{M},R)$ is finitely generated as an $R$-module for all $i\geq 0$.
Here, we use some ideas of Bredon:

\begin{lemma}\label{lbr}
Let $(R,\fm)$ be a regular local ring of dimension $d>0$  and $\mathcal{M}$ be torsion-free and cohomologically finite.
Then $\Ext^d_R(\mathcal{M},N)=0$ for any finitely generated $R$-module $N$.
\end{lemma}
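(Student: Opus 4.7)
The plan is to reduce the statement to the special case $N=R$, and then deduce $\Ext^d_R(\mathcal{M},R)=0$ from a Nakayama argument using a non-zerodivisor. Throughout, the crucial facts are that $R$ is a regular local ring of global dimension $d$ (so $\Ext^{d+1}_R(-,-)=0$ on any pair of modules) and that $R$ is a domain (so any nonzero element of $\fm$ is a non-zerodivisor on any torsion-free module).

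First I would reduce to $N=R$. Pick a surjection $R^k\twoheadrightarrow N$ with finitely generated kernel $K$. The long exact sequence of $\Ext^*_R(\mathcal{M},-)$ contains the segment
$$\Ext^d_R(\mathcal{M},R)^k\cong \Ext^d_R(\mathcal{M},R^k)\lo \Ext^d_R(\mathcal{M},N)\lo \Ext^{d+1}_R(\mathcal{M},K)=0,$$
so the vanishing of $\Ext^d_R(\mathcal{M},R)$ transfers to $N$.

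Next, I would handle the reduced claim by exhibiting a self-surjection killed by Nakayama. Since $d>0$, choose any nonzero $x\in\fm$. Because $\mathcal{M}$ is torsion-free over the domain $R$, multiplication by $x$ yields a short exact sequence $0\to\mathcal{M}\stackrel{x}{\lo}\mathcal{M}\to \mathcal{M}/x\mathcal{M}\to 0$. Applying $\Hom_R(-,R)$ and invoking $\Ext^{d+1}_R(-,-)=0$ gives
$$\Ext^d_R(\mathcal{M},R)\stackrel{x}{\lo} \Ext^d_R(\mathcal{M},R)\lo \Ext^{d+1}_R(\mathcal{M}/x\mathcal{M},R)=0.$$
Thus multiplication by $x$ is surjective on $E:=\Ext^d_R(\mathcal{M},R)$. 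By the cohomological finiteness hypothesis, $E$ is finitely generated over $R$, so Nakayama's lemma (with $x\in\fm$) forces $E=0$, as required.

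I do not expect a serious obstacle: the proof is a straightforward combination of a dimension shift in the second argument with a self-map trick in the first. The only conceptual point to watch is that, unlike in many $\Ext$-vanishing arguments, $\mathcal{M}$ is not assumed finitely generated, so one must rely on the \emph{global} dimension bound $\Ext^{d+1}_R(-,-)=0$ rather than on Auslander--Buchsbaum or on a finite resolution of $\mathcal{M}$ itself. The torsion-free hypothesis is used precisely to produce the non-zerodivisor $x$ on $\mathcal{M}$ that makes the Nakayama step go through.
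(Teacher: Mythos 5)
Your proof is correct, and it takes a genuinely more direct route than the paper's. The paper first splits $\mathcal{M}$ via the biduality map $f:\mathcal{M}\to\mathcal{M}^{\ast\ast}$, setting $\mathcal{T}:=\ker f$ and $F:=\im f$; it then shows $\Ext^d_R(\mathcal{T},R)$ is divisible (by the same multiplication-by-$r$ trick you use), shows $\pd_R(F)<d$ because $F$ embeds in the free (or at least free-resolvable) module $\mathcal{M}^{\ast\ast}$, deduces $\Ext^d_R(\mathcal{M},R)\cong\Ext^d_R(\mathcal{T},R)$, and finally kills the latter as a finitely generated divisible module over a local domain. You observe, correctly, that the detour through $\mathcal{T}$ and $F$ is superfluous for this lemma: since $\mathcal{M}$ itself is torsion-free, multiplication by any $0\ne x\in\fm$ is already injective on $\mathcal{M}$, and the resulting segment $\Ext^d_R(\mathcal{M},R)\xrightarrow{x}\Ext^d_R(\mathcal{M},R)\to\Ext^{d+1}_R(\mathcal{M}/x\mathcal{M},R)=0$ directly gives $xE=E$ with $E:=\Ext^d_R(\mathcal{M},R)$ finitely generated by hypothesis, so Nakayama finishes. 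Your reduction to $N=R$ via $0\to\Syz_1(N)\to R^k\to N\to 0$ is the same as the paper's. One thing worth noting: the paper's roundabout route is not purely wasteful, since the intermediate observation that $\pd_R(F)<d$ (labeled Fact A) inside the proof) is cited again in the proof of Proposition \ref{br}; your streamlined argument proves the lemma but does not by itself supply that auxiliary fact.
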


%Concerning the cohomologically finite property, we only need  $\Ext^d_R(\mathcal{M},R)$ is finitely %generated: 

\begin{proof}
	First, we show that $\Ext^d_R(\mathcal{M},R)=0$.
	Let $\mathcal{T}:=\ker(\mathcal{M}\stackrel{f}\lo \mathcal{M}^{\ast\ast})$ and $F:=\im(f)$.
There is an exact sequence $$0\lo \mathcal{T}\lo \mathcal{M}\lo F\lo 0.$$
Since any submodule  of a torsion-free is torsion-free,  we see $\mathcal{T}$ is torsion-free.
   Let $r\in R$. Since $\mathcal{T}$ is torsion-free,
 there is an exact sequence $$0\lo \mathcal{T}\stackrel{r}\lo \mathcal{T}\lo \mathcal{T}/r\mathcal{T}\lo 0.$$ This induces
 $$\Ext^d_R(\mathcal{T},R)\stackrel{r}\lo\Ext^d_R(\mathcal{T},R)\lo \Ext^{d+1}_R(\mathcal{T}/r\mathcal{T},R)=0,$$ i.e.,
 $\Ext^d_R(\mathcal{T},R)$
 is divisible. 
 Since $R$ is of global dimension $d$, any submodule of a free module
	is of projective dimension at most $d-1$. Recall that any module of the form 
	$(-)^\ast$ is a submodule of a free module. Since $F\subset\mathcal{M}^{\ast\ast}$ we deduce
the following.	\begin{enumerate}
		\item[Fact]A):  One has $\pd_R(F)<d$.
	\end{enumerate}
In view of  $$0=\Ext^d_R(F,R)\to\Ext^d_R(\mathcal{M},R)\lo\Ext^d_R(\mathcal{T},R)\lo \Ext^{d+1}_R(F,R)=0\quad(\ast)$$
	we see $\Ext^d_R(\mathcal{T},R)$ is finitely generated, and recall that  it is
 divisible. These yield that $\Ext^d_R(\mathcal{T},R)=0$.
By another use of $(\ast)$, we get $\Ext^d_R(\mathcal{M},R)=0$.

Now, let $N$ be a finitely generated $R$-module. We look at $$0\lo\Syz_1(N)\lo R^n\lo N\lo 0.$$
This induces the following$$0=\Ext^d_R(\mathcal{M},R^n)\lo\Ext^d_R(\mathcal{M},N) \lo\Ext^{d+1}_R(\mathcal{M},\Syz_1(N))=0,$$ and consequently, $\Ext^d_R(\mathcal{M},N)=0$.
\end{proof}

\begin{proposition}\label{br}
	Let $(R,\fm)$ be a $PID$  which is not complete and $\mathcal{M}$ be torsion-free and cohomologically finite.
	Then $\mathcal{M}$ is finitely generated.
\end{proposition}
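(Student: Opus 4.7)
The plan is to combine Lemma \ref{lbr} with the finiteness of $\mathcal{M}^{\ast}:=\Hom_R(\mathcal{M},R)$ to carve out a finitely generated free submodule $N\subseteq\mathcal{M}$, and then to use the non-completeness of $R$ to force $\mathcal{M}=N$ via a single $\Ext$ computation. Let $\pi$ be a uniformizer of the DVR $R$ and $k=R/\pi R$.

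Since $R$ is a $1$-dimensional regular local ring, Lemma \ref{lbr} gives $\Ext^1_R(\mathcal{M},L)=0$ for every finitely generated $R$-module $L$; in particular $\Ext^1_R(\mathcal{M},R)=0$, so $\mathcal{M}^{\ast}$ is finitely generated and torsion-free over the PID $R$, whence $\mathcal{M}^{\ast}\cong R^n$ for some $n\geq 0$. Applying $\Hom_R(\mathcal{M},-)$ to $0\to R\xrightarrow{\pi^{k}}R\to R/\pi^{k}\to 0$ and using this $\Ext^{1}$-vanishing yields $\Hom_R(\mathcal{M}/\pi^{k}\mathcal{M},R/\pi^{k})\cong\mathcal{M}^{\ast}/\pi^{k}\mathcal{M}^{\ast}\cong(R/\pi^{k})^{n}$. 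Since $R/\pi^{k}$ is Artinian Gorenstein, Matlis duality then forces $\mathcal{M}/\pi^{k}\mathcal{M}$ to be finitely generated of length $kn$; in particular $\dim_k(\mathcal{M}/\pi\mathcal{M})=n$. I next lift a $k$-basis of $\mathcal{M}/\pi\mathcal{M}$ to $e_1,\dots,e_n\in\mathcal{M}$ and set $N:=Re_1+\cdots+Re_n$; torsion-freeness of $\mathcal{M}$ together with $\bigcap_k\pi^{k}R=0$ forces $N\cong R^{n}$, while a Nakayama iteration gives $N+\pi^{k}\mathcal{M}=\mathcal{M}$ for every $k$. The length computation now shows $N/\pi^{k}N\to\mathcal{M}/\pi^{k}\mathcal{M}$ is an isomorphism, hence $N\cap\pi^{k}\mathcal{M}=\pi^{k}N$: $N$ is pure in $\mathcal{M}$, so $Q:=\mathcal{M}/N$ is torsion-free. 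Being $\pi$-divisible as well, $Q$ is a vector space over $K:=\operatorname{Frac}(R)$, say $Q\cong K^{(\kappa)}$.

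The hard step, and the only point at which the non-completeness of $R$ enters decisively, is to force $\kappa=0$. Applying $\Hom_R(-,R)$ to $0\to N\to\mathcal{M}\to Q\to 0$ and using $\Ext^1_R(N,R)=0=\Ext^1_R(\mathcal{M},R)$ together with $\Hom_R(Q,R)=0$ (as $Q$ is divisible and $R$ reduced) produces the short exact sequence
$$0\lo\mathcal{M}^{\ast}\lo N^{\ast}\lo\Ext^1_R(Q,R)\lo 0,$$
so $\Ext^1_R(Q,R)$ is a quotient of $R^n$ and hence finitely generated. On the other hand, $\Ext^1_R(K^{(\kappa)},R)\cong(\Ext^1_R(K,R))^{\kappa}$, and the sequence $0\to R\to K\to K/R\to 0$ combined with $\Ext^1_R(K/R,R)\cong\varprojlim_n R/\pi^n=\widehat{R}$ (via $\varprojlim^{1}\Hom_R(R/\pi^n,R)=0$) identifies $\Ext^1_R(K,R)\cong\widehat{R}/R$. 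Because $R$ is not complete, $\widehat{R}/R\neq 0$; moreover it is $\pi$-divisible, so Nakayama forbids it from being finitely generated, and any $(\widehat{R}/R)^{\kappa}$ with $\kappa\geq 1$ surjects onto $\widehat{R}/R$ and is likewise not finitely generated. Comparing the two descriptions of $\Ext^1_R(Q,R)$ forces $\kappa=0$, whence $Q=0$ and $\mathcal{M}=N\cong R^n$ is finitely generated.
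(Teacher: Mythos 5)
Your proof is correct, but it takes a genuinely different route from the paper's. Both arguments open the same way, using Lemma \ref{lbr} to obtain $\Ext^1_R(\mathcal{M},R)=0$ and hence $\mathcal{M}^\ast\cong R^n$. The paper then factors $\mathcal{M}$ through the biduality map: it sets $\mathcal{T}=\ker(\mathcal{M}\to\mathcal{M}^{\ast\ast})$ and $F=\operatorname{im}(\mathcal{M}\to\mathcal{M}^{\ast\ast})$, observes $F$ is finitely generated free (so the sequence splits and $\mathcal{M}\cong\mathcal{T}\oplus F$), checks $\mathcal{T}^\ast=0=\Ext^1_R(\mathcal{T},R)$, and then kills $\mathcal{T}$ in a single stroke by invoking Nunke's theorem \cite[Theorem 8.5]{n} over a non-complete Dedekind ring. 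You instead exhibit a different finitely generated free piece: you first prove each $\mathcal{M}/\pi^k\mathcal{M}$ has finite length via $\Ext^1$-vanishing and Matlis duality over the Artinian Gorenstein ring $R/\pi^k$, lift a basis to a pure free submodule $N$, identify $Q=\mathcal{M}/N$ as a $K$-vector space, and then kill $Q$ by the explicit computation $\Ext^1_R(K,R)\cong\widehat{R}/R$, using non-completeness only to see that this module is nonzero (and divisible, hence not finitely generated). In effect you have inlined the piece of Nunke's argument that the paper cites as a black box; the gain is a self-contained, fully elementary proof, at the cost of length. One small point worth spelling out: the inference from ``$\Hom_{R/\pi^k}(\mathcal{M}/\pi^k\mathcal{M},R/\pi^k)$ has finite length'' to ``$\mathcal{M}/\pi^k\mathcal{M}$ has finite length'' is not a direct citation of Matlis duality (which is stated for modules already known to be finitely generated or Artinian); one first needs the observation that if $X/\mathfrak{m}X$ were infinite-dimensional then $X$ would surject onto $k^{(I)}$ for $I$ infinite, making $\Hom(X,R/\pi^k)\supseteq k^I$ infinite-dimensional. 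This is easy but should be said.
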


\begin{proof}
	Let $\mathcal{T}:=\ker(\mathcal{M}\stackrel{f}\lo \mathcal{M}^{\ast\ast})$ and $F:=\im(f)$. By definition,
	$\mathcal{T}^\ast=0$. Due to Fact \ref{lbr}.A),  $F$ is free and of finite rank (recall that $\mathcal{M}^\ast$ is finitely generated). According to the proof of Lemma \ref{lbr} we know$$\Ext^1_R(\mathcal{T},R)=\Ext^1_R(\mathcal{M},R)=0.$$
	By a result of Nunke \cite[Theorem 8.5]{n},
	$\mathcal{T}=0$. In view of the exact sequence $$0\lo \mathcal{T}\lo \mathcal{M}\lo F\lo 0,$$  we deduce   $\mathcal{M}$ is free and of finite rank.
\end{proof}

\begin{observation}
Let $(R,\fm)$ be a 1-dimensional complete local domain. Then $\mathcal{Q}(R)/R$ is cohomologically finite.	
\end{observation}

\begin{proof}
	By a result of Matlis, $\pd(\mathcal{Q}(R))=1$. Following $0\to R\to \mathcal{Q}(R)\to\mathcal{Q}(R)/R\to 0$,
	we observe that $\Ext^{>1}_R(\mathcal{Q}(R)/R,R) =0$.
	Also, $\Hom_R(\mathcal{Q}(R)/R,R)=0$. 
	Since $R$ is complete, 
	$\Ext^1_R(\mathcal{Q}(R),R)=0$. We put this in$$0=\Hom_R(\mathcal{Q}(R),R)\to \Hom_R(R,R)\to\Ext^{1}_R(\mathcal{Q}(R)/R,R)\to \Ext^{1}_R(\mathcal{Q}(R),R) =0,$$
	and conclude that $\Ext^{1}_R(\mathcal{Q}(R)/R,R)=R$. So, $\Ext^{\ast}_R(\mathcal{Q}(R)/R,R)$ is finitely generated (and free). By definition, $\mathcal{Q}/R$ is cohomologically finite.
\end{proof}
We recall that a ring is slender if for each countable family   $\{\mathcal{M}_n\}$ the natural map $$\bigoplus_n \Hom_R(\mathcal{M}_n,R)\stackrel{\cong}\lo \Hom_R(\prod_{n=1}^{\infty}\mathcal{M}_n,R)$$ is an isomorphism.
Here, we present a connection to set theory of rings.

\begin{corollary}
	Let $R$ be a discrete valuation domain of positive dimension. The following are equivalent:
	\begin{enumerate}
		\item[a)]  cohomologically finite and finite are equivalent notions  over torsion-free modules;
		\item[b)] $R$ is slender.
	\end{enumerate}
\end{corollary}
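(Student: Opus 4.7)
The plan is to prove that each of $(a)$ and $(b)$ is equivalent to the statement that $R$ is not $\fm$-adically complete; chaining these two equivalences then yields $(a)\iff(b)$. Since $R$ is a DVR of positive dimension, non-completeness is the single topological dichotomy in play, and both Proposition \ref{br} and the classical slenderness characterization of DVRs are tailored to it.

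First I would establish $(a)\iff R$ is not complete. The direction $(\Leftarrow)$ is immediate from Proposition \ref{br}: a DVR is a PID, so if $R$ is not complete then every torsion-free cohomologically finite module is finitely generated, while the reverse implication is trivial. For $(\Rightarrow)$ I argue by contraposition. If $R$ is complete, set $K:=\mathrm{Frac}(R)$; then $K$ is torsion-free, and not finitely generated over $R$ (because $\dim R>0$ forces $K\neq R$). By Auslander's cohomological vanishing theorem for the fraction field of a complete local domain, recalled in \S 6, $\Ext^i_R(K,R)=0$ for every $i\geq 0$, so $K$ is cohomologically zero and in particular cohomologically finite. This exhibits a torsion-free cohomologically finite module which is not finitely generated, contradicting $(a)$.

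Next I would establish $(b)\iff R$ is not complete. This is the classical slenderness dichotomy for discrete valuation domains, tracing back to the same body of work of Nunke that is already invoked in Proposition \ref{br}. If $R$ is not complete, the standard Nunke-type argument shows that for a countable family $\{\mathcal{M}_n\}$ any homomorphism $\prod_n\mathcal{M}_n\to R$ must vanish on all elements whose coordinates lie in progressively higher powers of $\fm$ (otherwise such an element would map to a non-convergent Cauchy sequence in $R$), and this forces the natural map $\bigoplus_n\Hom_R(\mathcal{M}_n,R)\to\Hom_R(\prod_n\mathcal{M}_n,R)$ to be an isomorphism. Conversely, if $R$ is complete, a convergent series $\sum_n r_n e_n$ with $r_n\in\fm^n$ but $r_n\neq 0$ defines a homomorphism $R^{\mathbb{N}}\to R$ nontrivial on infinitely many basis vectors, violating the isomorphism required in $(b)$.

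The main obstacle is the equivalence $(b)\iff$ ``$R$ not complete'', because it sits outside the local cohomology framework developed in the paper and appeals to classical set-theoretic algebra. The $(a)$ side, by contrast, is essentially a direct assembly of Proposition \ref{br} with Auslander's vanishing result from \S 6. Once both equivalences are in hand, $(a)\iff(b)$ follows formally by composing them through the common intermediate condition of non-completeness.
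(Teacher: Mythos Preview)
Your proposal is correct and takes essentially the same route as the paper: both pivot on the intermediate condition ``$R$ is not $\fm$-adically complete'', invoking Proposition~\ref{br} and Auslander's Fact~\ref{au} for the equivalence with~$(a)$, and the classical slenderness characterization of DVRs for~$(b)$. The only difference is that the paper simply cites \cite[Theorem 2.9]{ek} for the latter equivalence rather than sketching the Nunke-type argument.
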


\begin{proof}
	$ a)\Rightarrow b)$: Suppose on the way of contradiction that $R$ is not slender.
	By \cite[Theorem 2.9]{ek} $R$ is complete. In view of Fact \ref{au} (see below), the fraction field
	of $R$ is cohomologically finite. It is well-known and easy to see that
	the fraction field of $R$ is not finitely generated. Since it is torsion-free, we get to a contradiction.
	
	$ b)\Rightarrow a)$: In view of \cite[Theorem 2.9]{ek},
	$R$ is not complete. In  particular, we are in the situation of Proposition \ref{br}
	to deduce a).
\end{proof}

\begin{definition}
	A module $\mathcal{M}$ is called cohomologically  of finite length   if
	$\Ext^i_R(\mathcal{M},R)$ is of finite length as an $R$-module  for all $i\geq 0$.
\end{definition}

\begin{proposition}
	Let $G$ be an abelian group. Then $G$ is cohomologically  of finite length  
	iff $\ell_{\mathbb{Z}}(G)<\infty$. 
\end{proposition}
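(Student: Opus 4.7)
The plan is to treat each direction separately. The reverse implication is a direct calculation: $\ell_{\mathbb{Z}}(G)<\infty$ forces $G$ to be a finite abelian group, so by the structure theorem $G\cong\bigoplus_{i=1}^t\mathbb{Z}/n_i\mathbb{Z}$, and the resolution $0\to\mathbb{Z}\stackrel{n}{\to}\mathbb{Z}\to\mathbb{Z}/n\to 0$ gives $\Hom(\mathbb{Z}/n,\mathbb{Z})=0$, $\Ext^1_{\mathbb{Z}}(\mathbb{Z}/n,\mathbb{Z})\cong\mathbb{Z}/n$, and vanishing in higher degrees, all of finite length. For the forward direction, since the global dimension of $\mathbb{Z}$ is one, the only $\Ext$ modules to control are $G^\ast$ and $\Ext^1(G,\mathbb{Z})$, both assumed of finite length.

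The first reduction would be to show $G$ is torsion. Set $F:=G/\tor(G)$ and apply $\Hom(-,\mathbb{Z})$ to $0\to \tor(G)\to G\to F\to 0$. Since $\tor(G)^\ast=0$, one obtains an isomorphism $F^\ast\cong G^\ast$ and an injection $\Ext^1(F,\mathbb{Z})\hookrightarrow \Ext^1(G,\mathbb{Z})$, so both groups inherit finite length. A torsion-free abelian group of finite length must be zero, whence $F^\ast=0$. Imitating the argument of Lemma \ref{lbr} with $d=1$: for each nonzero $r\in \mathbb{Z}$, the sequence $0\to F\stackrel{r}{\to} F\to F/rF\to 0$ is exact, and $\Ext^2=0$ gives a surjection $\Ext^1(F,\mathbb{Z})\stackrel{r}{\to}\Ext^1(F,\mathbb{Z})$, so $\Ext^1(F,\mathbb{Z})$ is divisible. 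A divisible abelian group of finite length must vanish, so $\Ext^1(F,\mathbb{Z})=0$. Then \cite[Theorem 8.5]{n}, invoked exactly as in Proposition \ref{br}, forces $F=0$; hence $G=\tor(G)$ is torsion.

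The second step handles the torsion case. The short exact sequence $0\to\mathbb{Z}\to\mathbb{Q}\to\mathbb{Q}/\mathbb{Z}\to 0$, together with $\Hom(G,\mathbb{Q})=0$ (since $G$ is torsion) and $\Ext^1(G,\mathbb{Q})=0$ (since $\mathbb{Q}$ is an injective $\mathbb{Z}$-module), yields the natural isomorphism $\Ext^1(G,\mathbb{Z})\cong \Hom(G,\mathbb{Q}/\mathbb{Z})$, which is therefore finite; call this finite group $H$. Because $\mathbb{Q}/\mathbb{Z}$ is a cogenerator of $\mathbb{Z}$-modules, the evaluation map $G\to \Hom(H,\mathbb{Q}/\mathbb{Z})$ is injective: for every $0\neq g\in G$, a nonzero homomorphism $\langle g\rangle\to\mathbb{Q}/\mathbb{Z}$ extends to $G$ by injectivity of $\mathbb{Q}/\mathbb{Z}$. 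Pontryagin duality for finite abelian groups gives $\Hom(H,\mathbb{Q}/\mathbb{Z})\cong H$, so $G$ embeds into the finite group $H$ and is itself finite; equivalently, $\ell_{\mathbb{Z}}(G)<\infty$.

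The step I expect to be the main obstacle is the appeal to \cite[Theorem 8.5]{n} inside the first reduction: reducing a torsion-free abelian group $F$ with vanishing $F^\ast$ and $\Ext^1(F,\mathbb{Z})$ to zero is precisely Nunke's content, and the cohomological divisibility trick, available because $\gd(\mathbb{Z})=1$, is what promotes ``finite length'' to ``zero'' and thereby unlocks that theorem. Once $G$ is known to be torsion, the Pontryagin-duality endgame of the second step is routine.
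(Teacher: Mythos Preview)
Your proof is correct and more self-contained than the paper's. The paper's argument is a one-liner: it cites \cite[Proposition V.14.7]{br} to conclude that a cohomologically finite abelian group is finitely generated, then applies the structure theorem and notes that $\Hom(G,\mathbb{Z})\cong\mathbb{Z}^n$ having finite length forces $n=0$. Your argument instead unpacks the finiteness step directly: you first kill the torsion-free quotient $F=G/\tor(G)$ by promoting ``finite length'' to ``zero'' via the divisibility trick and then invoking Nunke's Theorem~8.5 (which applies to $\mathbb{Z}$ as a Dedekind domain with infinitely many primes, not only to local PIDs as in Proposition~\ref{br}), and you finish the torsion case by the Pontryagin-duality identification $\Ext^1(G,\mathbb{Z})\cong\Hom(G,\mathbb{Q}/\mathbb{Z})$ together with the cogenerator property of $\mathbb{Q}/\mathbb{Z}$. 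What the paper's route buys is brevity at the cost of importing a result from sheaf theory; what your route buys is that every ingredient (Nunke's theorem, injective cogenerators) is already in play elsewhere in the paper, so nothing external is needed. Both are valid; yours is arguably the more transparent proof.
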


\begin{proof}
Thanks to \cite[Proposition V.14.7]{br} we know that $G$ is finitely generated. By fundamental theorem of finitely generated abelian groups, $G\cong \mathbb{Z}^n\oplus \bigoplus_i \frac{\mathbb{Z}^{r_i}}{n_i\mathbb{Z}^{r_i}}$ for some $n, n_i,r_i\in \mathbb{N}_0$. Since
 $\Hom(G,\mathbb{Z}) $ is of finite length, it follows that $n=0$. In other words, $G$ is of finite length.
\end{proof}

\begin{definition}
	Let $(R,\fm)$ be local.	A module $\mathcal{M}$ is called H-finite  if
	$\HH^i_{\fm}(\mathcal{M})$ is artinian as an $R$-module for all $i\geq 0$.
\end{definition}

By $\mathcal{Q}(R)$ we mean  the fraction field  of a local domain $R$.

\begin{remark}
For example, any finite module is  H-finite, but not the converse. Indeed, let $R$  be a local domain $R$ of positive dimension.  Since 
$\HH^i_{\fm}(\mathcal{Q}(R))=0$ for all $i\geq 0$, it is non-finite and H-finite.
\end{remark}

	\begin{proposition}\label{hf}
		Let  $(R,\fm)$  be a complete quasi-Gorenstein ring and  $\mathcal{M}$ be reflexive and H-finite. Then $\mathcal{M}$
		is finite.
	\end{proposition}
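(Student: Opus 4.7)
The plan is to mimic the strategy used at the end of the proof of Corollary \ref{quasi}, where completeness and the quasi-Gorenstein hypothesis are exploited to turn $\HH^d_{\fm}(-)$ into the functor $(-)\otimes_R E_R(k)$, and then to read off information about $\mathcal{M}^{\ast}$ from Matlis duality.

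First, since $R$ is a $d$-dimensional quasi-Gorenstein local ring we have $\HH^d_{\fm}(R)\cong E_R(k)$. Grothendieck's vanishing theorem makes $\HH^d_{\fm}(-)$ right exact, and local cohomology with respect to a finitely generated ideal preserves direct sums, so Watts' theorem yields a natural isomorphism
$$\HH^d_{\fm}(\mathcal{M})\cong \mathcal{M}\otimes_R E_R(k).$$
The H-finite hypothesis says that $\HH^d_{\fm}(\mathcal{M})$ is artinian, hence so is $\mathcal{M}\otimes_R E_R(k)$.

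Next, I would take Matlis duals. Using tensor-hom adjunction together with the fact that $\Hom_R(E_R(k),E_R(k))\cong \widehat{R}=R$ (since $R$ is complete), I compute
$$\bigl(\mathcal{M}\otimes_R E_R(k)\bigr)^{v}=\Hom_R(\mathcal{M}\otimes_R E_R(k),E_R(k))\cong \Hom_R(\mathcal{M},\Hom_R(E_R(k),E_R(k)))\cong \mathcal{M}^{\ast}.$$
Because $R$ is complete, the Matlis dual of an artinian module is finitely generated, so $\mathcal{M}^{\ast}$ is finitely generated.

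Finally, choose a surjection $R^{n}\twoheadrightarrow \mathcal{M}^{\ast}$ and dualize once more to obtain an embedding $\mathcal{M}^{\ast\ast}\hookrightarrow (R^{n})^{\ast}=R^{n}$. Noetherianity of $R$ forces $\mathcal{M}^{\ast\ast}$ to be finitely generated, and reflexivity of $\mathcal{M}$ then gives $\mathcal{M}\cong \mathcal{M}^{\ast\ast}$, which is finite, as desired. The only point requiring care is the identification in the Watts step; I do not expect a genuine obstacle, since all ingredients (quasi-Gorenstein identification of $\HH^d_{\fm}(R)$, completeness, and the standard Matlis duality dictionary between artinian and finitely generated modules) have already been used in this way in the earlier proofs of the paper.
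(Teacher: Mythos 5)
Your proof is correct and follows essentially the same route as the paper's: identify $\HH^d_{\fm}(-)$ with $-\otimes_R E_R(k)$ via Watts, use H-finiteness and Matlis duality to see that $\mathcal{M}^{\ast}$ is finitely generated, then dualize once more and invoke reflexivity. The only difference is that you spell out the last step (free presentation of $\mathcal{M}^{\ast}$, dualize, embed $\mathcal{M}^{\ast\ast}$ in a finite free module, use noetherianity) that the paper leaves implicit.
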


	\begin{proof}
Let $d:=\dim R$. Let $L$  be an $R$-module. According to Watts' theorem, $$\HH^d_{\fm}(L)=\HH^d_{\fm}(R)\otimes_RL=E_R(k)\otimes_RL.$$
	Thanks to Matlis theory,   $\HH^d_{\fm}(\mathcal{M})^v$ is finitely generated.
		In the light of  tensor-hom adjunction we observe $$\HH^d_{\fm}(\mathcal{M})^v\cong\Hom_R( \mathcal{M} ,\Hom_R(E_R(k),E_R(k))\cong \mathcal{M}^\ast $$ is finitely generated. We apply another $(-)^\ast$ and use the
			 reflexivity assumption, to see $\mathcal{M}$ is finitely generated.
	\end{proof}

	\begin{remark} 
Concerning Proposition \ref{hf}, the first (resp. second) item, see below,  shows that the reflexivity (resp. H-finite) assumption  is important:
			\begin{enumerate}
	\item[a)]  Let $(R,\fm,k)$ be a  local ring of positive dimension. Then $E_R(k)$ is H-finite, and is not finite.
	\item[b)] Let $(R,\fm)$ be a discrete valuation domain which is not complete. Clearly, $\mathcal{M}:=\oplus_{\mathbb{N}}R$ is not finitely generated. It is easy to see $\mathcal{M}^\ast=\prod_{\mathbb{N}} R$. Since $R$ is Slender, $$\Hom_R(\prod_{\mathbb{N}} R,R)\cong \bigoplus_n \Hom_R( R,R)=\mathcal{M}.$$So, $\mathcal{M}$ is reflexive.
\end{enumerate} 	
\end{remark}

		\begin{proposition}
		Let  $(R,\fm)$  be a complete  Gorenstein ring. Then $\mathcal{M}$ is  H-finite iff  $\mathcal{M}$
		is cohomologically finite.
	\end{proposition}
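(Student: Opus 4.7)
The plan is to prove and apply the duality isomorphism
$$\HH^i_\fm(\mathcal{M})^v \;\cong\; \Ext^{d-i}_R(\mathcal{M},R),\qquad 0\le i\le d:=\dim R,$$
valid for \emph{every} $R$-module $\mathcal{M}$ over a complete Gorenstein local ring. Combined with the Matlis correspondence between artinian and finitely generated modules, this immediately swaps the two finiteness notions.

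First I would derive the displayed formula. Since $R$ is Cohen-Macaulay, pick a regular sequence $\ux=x_1,\ldots,x_d$ with $\sqrt{(\ux)}=\fm$. The {\v C}ech complex $C^\bullet(\ux)$ is a bounded complex of flat $R$-modules, so it is $K$-flat and computes $\HH^\bullet_\fm(-)$. Over complete Gorenstein $R$ one has $\HH^i_\fm(R)=0$ for $i\neq d$ and $\HH^d_\fm(R)\cong E_R(k)$; hence truncation furnishes a derived-category quasi-isomorphism $C^\bullet(\ux)\simeq E_R(k)[-d]$, whence $\HH^i_\fm(\mathcal{M})\cong \Tor^R_{d-i}(E_R(k),\mathcal{M})$ for every $\mathcal{M}$. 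Applying the exact Matlis functor, the Hom-tensor adjunction, and the identification $E_R(k)^v=\Hom_R(E_R(k),E_R(k))\cong\widehat{R}=R$, produces
$$\HH^i_\fm(\mathcal{M})^v \;\cong\; \Tor^R_{d-i}(E_R(k),\mathcal{M})^v \;\cong\; \Ext^{d-i}_R(\mathcal{M},E_R(k)^v) \;\cong\; \Ext^{d-i}_R(\mathcal{M},R).$$

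With this in hand both implications are short. If $\mathcal{M}$ is H-finite, each $\HH^i_\fm(\mathcal{M})$ is artinian, so its Matlis dual $\Ext^{d-i}_R(\mathcal{M},R)$ is finitely generated for $0\le i\le d$; combined with the Gorenstein equality $\id_R(R)=d$, which forces $\Ext^{j}_R(\mathcal{M},R)=0$ for $j>d$, we conclude that $\mathcal{M}$ is cohomologically finite. Conversely, suppose $\mathcal{M}$ is cohomologically finite; then $\HH^i_\fm(\mathcal{M})^v$ is finitely generated for each $i$, hence $\HH^i_\fm(\mathcal{M})^{vv}$ is artinian. Since $\HH^i_\fm(\mathcal{M})$ is $\fm$-torsion, its injective envelope is a direct sum of copies of $E_R(k)$, which forces the canonical map $\HH^i_\fm(\mathcal{M})\to \HH^i_\fm(\mathcal{M})^{vv}$ to be injective; thus $\HH^i_\fm(\mathcal{M})$ is artinian as a submodule of an artinian module.

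The main point requiring care is the establishment of the duality formula for arbitrary, not necessarily finitely generated, $\mathcal{M}$---classical local duality only handles the finitely generated case---and the {\v C}ech/truncation route above is the step that does the genuine work. Once the duality formula is in place, the rest is a direct Matlis-duality translation, in the same spirit as the preceding H-finite proposition.
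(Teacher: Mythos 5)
Your argument is correct and follows the same overall strategy as the paper: establish the local duality isomorphism $\HH^{d-i}_{\fm}(\mathcal{M})^v\cong\Ext^i_R(\mathcal{M},R)$ for \emph{arbitrary} modules over the complete Gorenstein ring, then translate between H-finite and cohomologically finite by Matlis duality. The one difference is in how that duality isomorphism is obtained: the paper takes the top-degree case $\HH^d_{\fm}(\mathcal{M})^v\cong\mathcal{M}^\ast$ (via Watts' theorem, from the preceding proposition) and dimension-shifts along a projective resolution, whereas you get all degrees at once by viewing the \v{C}ech complex as a bounded flat resolution, shifted, of $E_R(k)$, so that $\HH^i_{\fm}(\mathcal{M})\cong\Tor^R_{d-i}(E_R(k),\mathcal{M})$; both are standard, and the resulting proofs are of comparable length. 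A small remark: your $\fm$-torsion argument for injectivity of $\HH^i_{\fm}(\mathcal{M})\to\HH^i_{\fm}(\mathcal{M})^{vv}$ is more than needed, since $E_R(k)$ is an injective cogenerator over any local ring and the biduality map is therefore injective for every module.
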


	\begin{proof}Let $d:=\dim R$.
	The point is to establish the local duality theorem for   H-finite modules over complete  Gorenstein rings. Note that  $f^0_{\mathcal{M}}:\HH^d_{\fm}(\mathcal{M})^v\stackrel{\cong}\lo \mathcal{M}^\ast,$ as we observed in the previous proposition. Since $\HH^{d-i}_{\fm}(-)$
	and $\Ext^i_R(-,R)$ are zero over projective modules and all $i>0$, it turns out from an inductive argument that  $\HH^{d-i}_{\fm}(\mathcal{M})^v\cong\Ext_R^i( \mathcal{M} ,R)$. This completes the proof.
	\end{proof}

In general, H-finite and cohomologically finite are not the same.
For example, let $R$ be  a local  domain of positive dimension.
Then, $\HH^{i}_{\fm}(\mathcal{Q}(R))=0$, i.e., $\mathcal{Q}(R)$ is H-finite.
By a result of Jensen \cite{j} there are cases for which $\Ext^1_R(\mathcal{Q}(R),R)$ is a non-empty
direct sum of  $\mathcal{Q}(R)$, i.e.,  $\mathcal{Q}(R)$ is not cohomologically finite.
The next section talks more on this topic.

\section{Cohomologically  zero injective modules} 

	By  $\eSupp(-)$, we mean $\bigcup_i\Supp(\Ext^i_R(-,R)).$
	
\begin{definition}
	A module  $(-)$ is called cohomologically zero if $\eSupp(-)=\emptyset$.
\end{definition}

Recall that a finitely generated module is cohomologically zero iff it is zero. 
Let us connect to the previous section:
\begin{corollary}
	Let $(R,\fm)$ be a $PID$. Then
	$\mathcal{Q}(R)$ is cohomologically finite iff $R$ is complete.
\end{corollary}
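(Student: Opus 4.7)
The plan is to combine Proposition \ref{br} with Fact \ref{au} (the Auslander result invoked just above, asserting that the fraction field of a complete local domain is cohomologically finite). Since $R$ is a local $PID$, either $R$ is a field (in which case $\mathcal{Q}(R)=R$ is tautologically cohomologically finite and $R$ is trivially complete, so the equivalence holds vacuously), or $R$ is a $DVR$ of positive dimension; I will focus on the latter.

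For the direction ``$R$ complete $\Rightarrow$ $\mathcal{Q}(R)$ cohomologically finite,'' I would simply invoke Fact \ref{au}: since $R$ is a complete local domain, its fraction field is cohomologically finite, and there is nothing more to do.

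For the converse, suppose $\mathcal{Q}(R)$ is cohomologically finite but, for contradiction, $R$ is not complete. Then $\mathcal{M}:=\mathcal{Q}(R)$ is torsion-free and cohomologically finite, so Proposition \ref{br} applies and forces $\mathcal{Q}(R)$ to be finitely generated as an $R$-module. But $\mathcal{Q}(R)$ is divisible: if $\pi\in\fm$ is a uniformizer, every $a/b\in\mathcal{Q}(R)$ equals $\pi\cdot(a/(\pi b))$, so $\fm\cdot\mathcal{Q}(R)=\mathcal{Q}(R)$. Nakayama's lemma then yields $\mathcal{Q}(R)=0$, which is absurd since $R$ is a positive-dimensional domain. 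Hence $R$ must be complete.

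The main obstacle is really only conceptual, namely recognizing that the nontrivial content of the forward implication is packaged into Fact \ref{au}; once that is granted, both directions reduce to short applications of earlier results (Proposition \ref{br} and Nakayama). No further computation is required.
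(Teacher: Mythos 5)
Your proof is correct and takes essentially the same route the paper indicates: Fact \ref{au} (which actually gives \emph{cohomologically zero}, a fortiori cohomologically finite) for the forward direction, and Proposition \ref{br} for the converse, after reducing to the positive-dimensional (DVR) case. You usefully spell out the step the paper leaves implicit---that $\mathcal{Q}(R)$ cannot be finitely generated---via divisibility and Nakayama.
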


To see the corollary, we may assume that it is of positive dimension, then it follows by Proposition \ref{br} and the following result of  
 Auslander  \cite[Page 166]{comment}:
 
\begin{fact}\label{au}
	Let  $(R,\fm)$  be a  complete local integral domain of positive dimension. Then
 $\mathcal{Q}(R)$ is  cohomologically zero.
\end{fact}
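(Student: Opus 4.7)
The plan is to show $\Ext^i_R(\mathcal{Q}(R),R)=0$ for every $i\geq 0$, which is exactly the assertion that $\mathcal{Q}(R)$ is cohomologically zero. My strategy is to Matlis-dualize the Ext computation into a Tor computation, after which the flatness of $\mathcal{Q}(R)$ does most of the work. The two hypotheses will enter at distinct points: completeness is used to identify $E_R(k)^v$ with $R$, and positive dimension is used to guarantee $\fm\neq 0$.

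The key ingredient is the natural isomorphism
$$\Tor^R_i(M,N)^v\;\cong\;\Ext^i_R(M,N^v)\qquad (i\geq 0),$$
valid for arbitrary $R$-modules $M,N$. To obtain it, I take a projective resolution $P_\bullet\to M$ and combine hom–tensor adjunction $\Hom_R(P_\bullet\otimes_R N,E_R(k))=\Hom_R(P_\bullet,N^v)$ with the exactness of $(-)^v$ (since $E_R(k)$ is injective), which lets $\Hom_R(-,E_R(k))$ pass through cohomology. Specialising $N:=E_R(k)$ and using completeness, $E_R(k)^v=\Hom_R(E_R(k),E_R(k))=\widehat{R}=R$, so
$$\Ext^i_R(\mathcal{Q}(R),R)\;\cong\;\Tor^R_i(\mathcal{Q}(R),E_R(k))^v.$$
The problem is therefore reduced to verifying that $\Tor^R_i(\mathcal{Q}(R),E_R(k))=0$ for all $i\geq 0$.

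For $i\geq 1$ this is immediate: $\mathcal{Q}(R)=(R\setminus\{0\})^{-1}R$ is flat over $R$. For $i=0$ one checks $\mathcal{Q}(R)\otimes_R E_R(k)=0$. Take a simple tensor $q\otimes e$. As $E_R(k)$ is $\fm$-torsion there is $n\geq 1$ with $\fm^n e=0$. Since $\dim R>0$ and $R$ is a domain, $\fm$ contains a nonzero element $r$; then $r^n e=0$ while $r^n$ is invertible in $\mathcal{Q}(R)$, so
$$q\otimes e\;=\;\tfrac{q}{r^n}\cdot r^n\otimes e\;=\;\tfrac{q}{r^n}\otimes r^n e\;=\;0.$$
All simple tensors vanish, hence $\mathcal{Q}(R)\otimes_R E_R(k)=0$.

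Putting the pieces together yields $\Ext^i_R(\mathcal{Q}(R),R)\cong 0^v=0$ for every $i\geq 0$, so $\eSupp(\mathcal{Q}(R))=\emptyset$ as required. I do not anticipate a serious obstacle: the only step with any subtlety is the adjunction isomorphism in the second paragraph, but it is a standard consequence of the injectivity of $E_R(k)$; the rest of the argument is elementary once the completeness and positive-dimension hypotheses are inserted in the right places.
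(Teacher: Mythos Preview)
Your argument is correct. The Matlis-duality isomorphism $\Ext^i_R(M,N^v)\cong\Tor^R_i(M,N)^v$ is valid as stated, and specialising to $N=E_R(k)$ with $E_R(k)^v\cong\widehat R=R$ (by completeness) reduces the claim to the vanishing of $\Tor^R_i(\mathcal{Q}(R),E_R(k))$, which follows from flatness for $i>0$ and from $\fm$-torsion of $E_R(k)$ together with $\dim R>0$ for $i=0$. All steps are clean.

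The paper itself does not give a direct proof of this fact; it attributes it to Auslander and then establishes a broader version (Observation~\ref{ausg}) that drops the noetherian and local hypotheses by invoking a result of Schenzel: for any flat module $F$ with $F\otimes_R R/I=0$ over a ring complete in the $I$-adic topology, one has $\Ext^i_R(F,M^{\widehat I})=0$. Your route is more elementary and entirely self-contained for the complete local case, trading the Schenzel black box for the explicit adjunction $\Hom_R(P_\bullet\otimes_R N,E_R(k))\cong\Hom_R(P_\bullet,N^v)$; the paper's route, in exchange, works outside the noetherian local setting where Matlis duality in the form $E_R(k)^v\cong R$ is unavailable. Incidentally, the isomorphism you use is exactly the one the paper records earlier (see Lemma~\ref{dmat}), so your proof fits naturally into the paper's toolkit.
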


This result of Auslander can be extended  in  $3$ different directions:

\begin{observation}\label{ausg}
	Let $R$ be   any   integral domain of positive dimension which is not necessarily  noetherian. Suppose $R$ is complete with respect to  a nontrivial ideal $I$. Then
	$\Ext^i_R(\mathcal{Q}(R),R)=0$ for all $i$.
\end{observation}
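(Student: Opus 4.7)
Fix any nonzero $a\in I$. The plan is to compute $\Ext^\ast_R(R_a,R)$ directly via an explicit two-term free resolution of $R_a$, showing those groups vanish, and then to transfer the vanishing to $\mathcal{Q}(R)$ through a Grothendieck spectral sequence, exploiting that $\mathcal{Q}(R)$ is naturally an $R_a$-module and that $R\to R_a$ is flat.

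\textbf{Step 1 (vanishing for $R_a$).} Since $R_a=\varinjlim(R\xrightarrow{a}R\xrightarrow{a}R\to\cdots)$, the standard telescope gives the free two-term resolution
$$0\longrightarrow \bigoplus_{n\in\NN}R\xrightarrow{\;\phi\;}\bigoplus_{n\in\NN}R\longrightarrow R_a\longrightarrow 0,\qquad \phi(e_n)=e_n-ae_{n+1}.$$
Applying $\Hom_R(-,R)$ yields a two-term complex $\prod_n R\xrightarrow{\phi^\ast}\prod_n R$ with $\phi^\ast(y_0,y_1,\dots)=(y_0-ay_1,\;y_1-ay_2,\dots)$. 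A cocycle $(y_n)$ satisfies $y_k=a^m y_{k+m}\in\bigcap_m a^m R\subseteq\bigcap_m I^m=0$ by $I$-adic separatedness, so $\Hom_R(R_a,R)=0$. For surjectivity of $\phi^\ast$, given $(z_n)$ set $y_k:=\sum_{n\geq 0}a^n z_{k+n}$; the partial sums are Cauchy in the $I$-adic topology since $a^n\in I^n$, hence converge in $R$ by $I$-adic completeness, and a direct substitution shows $\phi^\ast((y_k))=(z_k)$. Consequently $\Ext^1_R(R_a,R)=0$, and higher $\Ext$'s vanish because the resolution has length one.

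\textbf{Step 2 (transfer to $\mathcal{Q}(R)$).} Since localization is flat, $\Hom_R(R_a,-)$ sends $R$-injectives to $R_a$-injectives via the adjunction $\Hom_{R_a}(-,\Hom_R(R_a,I))\cong\Hom_R(-,I)$. Viewing $\mathcal{Q}(R)$ as an $R_a$-module, the identity $\Hom_{R_a}(\mathcal{Q}(R),\Hom_R(R_a,-))=\Hom_R(\mathcal{Q}(R),-)$ combined with the Grothendieck composition-of-functors machinery yields the spectral sequence
$$E_2^{p,q}=\Ext^p_{R_a}\bigl(\mathcal{Q}(R),\Ext^q_R(R_a,R)\bigr)\Longrightarrow \Ext^{p+q}_R(\mathcal{Q}(R),R).$$
Step 1 annihilates the entire $E_2$-page, so $\Ext^i_R(\mathcal{Q}(R),R)=0$ for every $i\geq 0$.

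\textbf{Main obstacle.} The delicate point is that $R$ is not assumed $a$-adically complete, only $I$-adically complete; the convergence of $\sum_n a^n z_{k+n}$ therefore has to be argued in the coarser $I$-adic topology, which is precisely what the inclusion $a^nR\subseteq I^n$ allows. A secondary (but routine) check is that $\Hom_R(R_a,-)$ preserves injectivity, which is the flatness input required for the spectral sequence to be valid.
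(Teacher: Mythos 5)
Your proof is correct, but it takes a genuinely different route from the paper. The paper's proof is a one-line invocation of Schenzel's theorem \cite[Theorem 4.4]{sch2}, which says that for a flat module $F$ with $F\otimes_R R/I=0$ one has $\Ext^i_R(F,M^{\widehat I})=0$ for all $i$ and all $M$; one then simply notes that $\mathcal{Q}(R)$ is flat and $\mathcal{Q}(R)\otimes_R R/I=0$. You instead rebuild a special case of that machinery from scratch: the telescope resolution of $R_a$ gives $\Ext^{\geq 2}_R(R_a,R)=0$ for free, $I$-adic separatedness kills $\Hom_R(R_a,R)$, and $I$-adic completeness (via convergence of $\sum_n a^n z_{k+n}$, using $a^nR\subseteq I^n$) gives surjectivity of $\phi^\ast$, hence $\Ext^1_R(R_a,R)=0$. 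The change-of-rings spectral sequence for the flat map $R\to R_a$ then transports the vanishing to $\mathcal{Q}(R)$, regarded as an $R_a$-module. What you gain is a self-contained argument that makes transparent exactly where completeness and separatedness enter; you also extract the intermediate fact $\Ext^\ast_R(R_a,R)=0$, which Schenzel's theorem subsumes but the paper never states. What you lose is brevity and generality: Schenzel's result handles all flat $F$ killed modulo $I$ in one stroke and avoids the spectral-sequence step. One small blemish: in Step 2 you reuse the symbol $I$ for both the ideal and a generic injective module in the adjunction $\Hom_{R_a}(-,\Hom_R(R_a,I))\cong\Hom_R(-,I)$; the mathematics is clear but the notation clashes with the standing hypothesis.
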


\begin{proof} Let $F$ be a flat module with the property that $F\otimes_R R/I=0$.
	The desired claim follows  immediately from \cite[Theorem 4.4]{sch2}, where it were shown that $\Ext^i_R(F,M^{\widehat I})=0$ 	for any $M$.
  It remains to note that $\mathcal{Q}(R)$ is flat and
	that  $\mathcal{Q}(R)\otimes _RR/I=0$.
\end{proof}

\begin{remark}
Recall from \cite{mat2} that a module $\mathcal{M}$ is called strongly cotorsion if 	$\Ext^i_R(\mathcal{Q}(R),\mathcal{M})=0$ for all $i$. By this terminology, the previous observation says that $R$  is  strongly cotorsion.
\end{remark}

Let $(R,\fm) $ be a complete-local integral domain.
In   \cite[Corollary 2]{matcanada},  Matlis proved
 that $\Ext^1_R(E_R(k),R)\neq  0$ iff $\dim R=1$. This is true for another class of rings as item b) of the next result indicates:

\begin{proposition}\label{extk}
	Let $(R,\fm,k)$ be a local Gorenstein integral domain of dimension $d>0$. Then \begin{equation*}
	\Ext^i_R(E_R(k),R)\simeq \left\{
	\begin{array}{rl}
	\widehat{R} & \  \   \   \   \   \ \  \   \   \   \   \ \text{if } i=d\\
	0&\  \   \   \   \   \ \  \   \   \   \   \ \text{otherwise} 
	\end{array} \right.
	\end{equation*}
	In particular, 	the following properties are true for the non-finitely generated module $E_R(k)$:
		\begin{enumerate}
		\item[a)]  	$E_R(k)$ is cohomologically finite iff $R$ is complete,
		\item[b)]	$\Ext^1_R(E_R(k),R)\neq  0$ iff $\dim R=1$,
			\item[c)] $E_R(k)$ is not cohomologically zero,
			\item[d)]	$\eSupp(E_R(k))=\Spec(R) \neq \{\fm\}=\Supp(E_R(k))$.
	\end{enumerate}
\end{proposition}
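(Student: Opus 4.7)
The plan is to compute $\Ext^i_R(E_R(k),R)$ directly from the minimal injective resolution of $R$. Because $R$ is Gorenstein of dimension $d$, Bass's theorem provides
\[0\lo R\lo I^0\lo I^1\lo\cdots\lo I^d\lo 0,\qquad I^i\;=\;\bigoplus_{\Ht(\fp)=i} E_R(R/\fp).\]
In particular $I^d=E_R(R/\fm)=E_R(k)$, whereas for $i<d$ every summand of $I^i$ has the form $E_R(R/\fp)$ with $\fp\neq\fm$. Applying $\Hom_R(E_R(k),-)$ to this resolution reduces the computation of all the $\Ext$'s to understanding these two kinds of $\Hom$ groups.

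The key input is the vanishing $\Hom_R(E_R(k),E_R(R/\fp))=0$ for every prime $\fp\neq\fm$. To see this, choose $a\in\fm\setminus\fp$. Since $\Ass_R E_R(R/\fp)=\{\fp\}$, multiplication by $a$ is injective on $E_R(R/\fp)$; and because $E_R(R/\fp)$ carries a natural $R_{\fp}$-module structure, the same $a$ in fact acts invertibly on it. Hence $E_R(R/\fp)$ is $\fm$-torsion free, while every element of $E_R(k)$ is $\fm$-torsion, so any $R$-linear map between them must be zero. Combined with the classical Matlis identification $\End_R(E_R(k))\cong\widehat{R}$, the complex $\Hom_R(E_R(k),I^{\bullet})$ collapses to $0\to 0\to\cdots\to 0\to\widehat{R}\to 0$, concentrated in degree $d$. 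Taking cohomology yields $\Ext^d_R(E_R(k),R)\cong\widehat{R}$ and $\Ext^i_R(E_R(k),R)=0$ for $i\neq d$.

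The four numbered consequences then fall out immediately. For (a), cohomological finiteness of $E_R(k)$ is equivalent to $\widehat{R}$ being finitely generated over $R$, which characterises completeness of $R$. For (b), the only degree in which the Ext can be nonzero is $d$, so $\Ext^1_R(E_R(k),R)\neq 0$ precisely when $d=1$. For (c) one simply notes $\widehat{R}\neq 0$. For (d), since $E_R(k)$ is $\fm$-torsion one has $\Supp_R E_R(k)=\{\fm\}$, while faithful flatness of $R\to\widehat{R}$ gives $\Supp_R(\widehat{R})=\Spec(R)$, so $\eSupp(E_R(k))=\Supp_R(\widehat{R})=\Spec(R)$; the hypothesis $d>0$ ensures these two sets are distinct.

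The only mildly delicate step is the vanishing lemma $\Hom_R(E_R(k),E_R(R/\fp))=0$ for $\fp\neq\fm$; once this is in hand, everything else is a direct application of Bass's structure theorem, Matlis duality, and the faithful flatness of completion, so I do not anticipate any serious obstacle.
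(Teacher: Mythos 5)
Your proposal is correct and follows essentially the same route as the paper: apply $\Hom_R(E_R(k),-)$ to the Bass minimal injective resolution of the Gorenstein ring $R$, use the vanishing $\Hom_R(E_R(k),E_R(R/\fp))=0$ for $\fp\neq\fm$ together with $\End_R(E_R(k))\cong\widehat{R}$, and read off the cohomology concentrated in degree $d$. The only difference is that you spell out the torsion argument for the Hom-vanishing (which the paper simply recalls) and give a cleaner, purely degree-counting justification for item (b).
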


\begin{proof}
	The injective resolution of $R$ is given by   $$0\to R\to  \oplus_{\Ht(\fq)=0}E_R(R/\fq)\to\ldots\to \oplus_{\Ht(\fq)=d-1}E_R(R/\fq)\to E_R(k)\to 0.$$ Recall that $\Hom_R(E_R(k),R)=0$ because the image any map is finitely generated and divisible. Now, we recall that  \begin{equation*}
	\Hom_R(E_R(k),E_R(R/\fp))\simeq \left\{
	\begin{array}{rl}
	\widehat{R} & \  \   \   \   \   \ \  \   \   \   \   \ \text{if } \fp=\fm\\
	0&\  \   \   \   \   \ \  \   \   \   \   \ \text{otherwise} 
	\end{array} \right.
	\end{equation*} 
	Thus, $$ 	\Ext^i_R(E_R(R/ \fp),R)=H^i(0\to\  0\to\ldots \to 0\to \widehat{R}\to 0).$$ This proves the first part.

 a): Note that	$\widehat{R}$ is finitely generated
	as an $R$-module iff $R$ is complete.
	
b): The existence of a finitely generated injective module implies that the ring is artinian.	

Items  c) and $d)$ are clear, because $d>0$.
\end{proof}

 Here, we deal with the corresponding property of $E_R(R/ \fp)$ for a middle $\fp$.

\begin{lemma}
	Let $(R,\fm,k)$ be a regular  local ring  of dimension $d>0$ and $\fp$ be a prime ideal of height $d-1$. There is a number ${\mu}$ (finite, or infinite) and a module $H$ isomorphic to $\widehat{R_{\fp}}$ such that \begin{equation*}
	\Ext^i_R(E_R(R/ \fp),R)\simeq \left\{
	\begin{array}{rl}
	\frac{	\widehat{R_{\fp}^{\mu}} }{H}& \  \   \   \   \   \ \  \   \   \   \   \ \text{if } i=d\\
	0&\  \   \   \   \   \ \  \   \   \   \   \ \text{otherwise} 
	\end{array} \right.
	\end{equation*}
\end{lemma}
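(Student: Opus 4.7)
The plan is to mimic the computation of Proposition \ref{extk}, adapting it to a prime $\fp$ of middle height $d-1$ rather than the maximal ideal. I start from the minimal injective resolution of the regular local ring $R$,
$$0\lo R\lo \bigoplus_{\Ht\fq=0}E_R(R/\fq)\lo\cdots\lo \bigoplus_{\Ht\fq=d-1}E_R(R/\fq)\lo E_R(k)\lo 0,$$
and compute $\Ext^i_R(E_R(R/\fp),R)$ as the cohomology of the complex obtained by applying $\Hom_R(E_R(R/\fp),-)$.

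The first step is a vanishing principle: $\Hom_R(E_R(R/\fp),E_R(R/\fq))=0$ unless $\fp\subseteq\fq$. Indeed, the image of any such nonzero homomorphism is a submodule of $E_R(R/\fq)$, hence has $\fq$ as its only associated prime; but this image is also a quotient of $E_R(R/\fp)$, so its support is contained in $\Supp E_R(R/\fp)=\V(\fp)$, which forces $\fp\subseteq\fq$. Since $\Ht\fp=d-1$, the primes $\fq$ occurring in the resolution and containing $\fp$ are exactly $\fq=\fp$ (in cohomological degree $d-1$) and $\fq=\fm$ (in cohomological degree $d$). Consequently, the terms of the complex $\Hom_R(E_R(R/\fp),I^\bullet)$ outside degrees $d-1$ and $d$ vanish, which immediately yields $\Ext^i_R(E_R(R/\fp),R)=0$ for $i\notin\{d-1,d\}$.

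Next I identify the two surviving terms. Viewing $E_R(R/\fp)=E_{R_\fp}(k(\fp))$ and applying Matlis self-duality over the complete regular local ring $\widehat{R_\fp}$, the degree-$(d-1)$ term reads $\Hom_R(E_R(R/\fp),E_R(R/\fp))\cong\widehat{R_\fp}$. The degree-$d$ term $\Hom_R(E_R(R/\fp),E_R(k))$ inherits an $\widehat{R_\fp}$-module structure from the source; invoking the structure theory of Matlis duals of indecomposable injective modules (equivalently, a direct limit calculation along the filtration $E_R(R/\fp)=\bigcup_n(0:_{E_R(R/\fp)}\fp^n)$), it is isomorphic to a completion of a free $\widehat{R_\fp}$-module, i.e., to $\widehat{R_\fp^{\mu}}$ for a suitable cardinal $\mu$. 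The complex $\Hom_R(E_R(R/\fp),I^\bullet)$ therefore reduces to
$$0\lo \widehat{R_\fp}\stackrel{\phi}\lo \widehat{R_\fp^{\mu}}\lo 0$$
in degrees $d-1$ and $d$.

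The final step is to analyze $\phi$, which arises by post-composition with the differential $E_R(R/\fp)\to E_R(k)$ of the minimal injective resolution. Minimality forces that differential to be nonzero on every indecomposable summand, and the induced map on $\Hom_R(E_R(R/\fp),-)$ is then injective. Granting this, setting $H:=\im\phi\cong\widehat{R_\fp}$ gives $\Ext^{d-1}_R(E_R(R/\fp),R)=\ker\phi=0$ and $\Ext^d_R(E_R(R/\fp),R)=\coker\phi=\widehat{R_\fp^{\mu}}/H$, exactly as claimed. The main obstacle is the structural description of $\Hom_R(E_R(R/\fp),E_R(k))$ as $\widehat{R_\fp^{\mu}}$ together with the injectivity of $\phi$; both require careful use of Matlis theory over $\widehat{R_\fp}$ and of the minimality of the resolution, while the rest of the argument is formal.
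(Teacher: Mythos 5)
Your overall strategy coincides with the paper's: apply $\Hom_R(E_R(R/\fp),-)$ to the minimal injective resolution of $R$, observe that only the terms in degrees $d-1$ and $d$ survive (via $\Hom_R(E_R(R/\fp),E_R(R/\fq))=0$ unless $\fp\subseteq\fq$), identify those terms as $\widehat{R_\fp}$ and $\widehat{R_\fp^\mu}$ respectively, and then show the connecting map $\phi$ is injective with image $H\cong\widehat{R_\fp}$. So far so good.

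The gap is in the injectivity step, and it is a real one. You write that minimality forces the differential $E_R(R/\fp)\to E_R(k)$ to be nonzero on the summand and that ``the induced map on $\Hom_R(E_R(R/\fp),-)$ is then injective.'' That implication is not valid in general: post-composition with a nonzero map need not be injective, and ``nonzero differential'' only gives you that $\phi\neq 0$, not $\ker\phi=0$. The paper closes this precisely by using that $\phi$ is $\widehat{R_\fp}$-linear, that a $\widehat{R_\fp}$-linear map $\widehat{R_\fp}\to\widehat{R_\fp^\mu}$ is given by componentwise multiplication by a tuple $(r_i)$, and---crucially---that $\widehat{R_\fp}$ is an \emph{integral domain} (which holds because $R$ is regular, so $R_\fp$ and hence $\widehat{R_\fp}$ are regular local). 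Nonzero plus domain then gives $\ker\phi=0$. Your proposal never invokes the domain property (your flagged ``obstacle'' points only to Matlis theory and minimality), so as written the injectivity of $\phi$ is asserted rather than proved, and the stated justification would not survive scrutiny. Everything else in your argument---the vanishing outside degrees $d-1,d$, the identification of the degree-$(d-1)$ term as $\widehat{R_\fp}$ via Matlis self-duality over $\widehat{R_\fp}$, and the identification of the degree-$d$ term as $\widehat{R_\fp^\mu}$ (the paper cites Schenzel for $E_R(R/\fp)^v\cong\widehat{R_\fp^\mu}$)---matches the paper and is fine.
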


\begin{proof}
	Let $X(i)=\{\fq\in\Spec(R):\Ht(\fq)=i\}$. The minimal  injective resolution of $R$ is given by  $$\zeta:=0\to R\to  \oplus_{\fq\in X(0)}E_R(R/\fq)\to\ldots\to\oplus_{\fq\in X(d-2)}E_R(R/\fq) \stackrel{f}\lo \oplus_{\fq\in X(d-1)}E_R(R/\fq)\stackrel{g}\lo E_R(k)\to 0.$$ Recall that $\Hom(E_R(k),R)=0$ because the image of any such map  at the same time is both finitely generated and divisible. Suppose $i>0$. Recall that$$\Hom_R(E_R(R/ \fp),E_R(R/ \fp))=\Hom_{R_{\fp}}(E_R(R/ \fp)_{\fp},E_R(R/ \fp)_{\fp})=\widehat{R_{\fp}}.$$ Also, there is a $\mu$ such that $E_R(R/ \fp)^v=\widehat{R_{\fp}^\mu}$ (see \cite[Page 2392]{sch}).
	Now, use the fact:  \begin{equation*}
	\Hom_R(E_R(R/ \fp),E_R(R/\fq))\simeq \left\{
	\begin{array}{rl}
	\widehat{R_{\fp}} & \  \   \   \   \   \ \  \   \   \   \   \ \text{if } \fp=\fq\\
	\widehat{R_{\fp}^{\mu}}&\  \   \   \   \   \ \  \   \   \   \   \ \text{if }  \fq=\fm
	\\
	0&\  \   \   \   \   \ \  \   \   \   \   \ \text{otherwise} 
	\end{array} \right.
	\end{equation*}
	In particular, $$	\Ext^i_R(E_R(R/ \fp),R)=H^i\left(0\to\  0\to\ldots \to 0\to 	\widehat{R_{\fp}}\stackrel{h}\lo \widehat{R_{\fp}^{\mu}}\to 0\right)\quad(+)$$
	
	Let $g:=(g_{\fq})$ where $g_{\fq}:E_R(R/ \fq)\to E_R(R/ \fm)$. Suppose on the way of contradiction that $g_{\fq}=0$. Then $E_R(R/ \fq)\subset\ker(g)=\im(f)$. Let $f_1$ be the composition map $$\oplus_{\fp\in X(d-2)}E_R(R/\fp) \stackrel{f}\lo \oplus_{Q\in X(d-1) }E_R(R/Q)\twoheadrightarrow \oplus_{Q\in X(d-1) \setminus{\fq}}E_R(R/Q).$$ We look at 
	$$	\oplus_{\fq\in X(d-2)}E_R(R/\fq) \stackrel{f_1}\lo \oplus_{\fq\in X(d-1) \setminus{\fq}}E_R(R/\fq)\stackrel{g_1}\lo E_R(k)\to 0,$$ and observe that
	
		\begin{enumerate}
		\item[a)]  $\im(f_1)\oplus E_R(R/\fq)=\im(f)$ and
		\item[b)]	 $\ker(g)=\ker(g_1)\oplus E_R(R/\fq)$.
	\end{enumerate}

	Since
	$\ker(g)=\im(f)$ we deduce that $\ker(g_1)=\im(f_1)$.
	This is in contradiction with the minimality of $\zeta$. In sum, $g_{\fq}\neq 0$.
	
	Since  $g_{\fp}:E_R(R/ \fp)\to E_R(R/ \fm) $ is nonzero, we know $h:=\Hom(E_R(R/ \fp),g_{\fp})$ is nonzero.
	Let $A$ be a ring and $F:A\to \oplus_{i\in I} A$ be nonzero. We composite this with a suitable projection to the $i$-th component, $F$ induces a nonzero map  $$A\stackrel{F}\lo  \oplus_{i\in I} A\twoheadrightarrow A,$$and denote the composite map by $f_i$. In particular, $f_i$ is a multiplication by  some $r_i\in R$. By assumption, $\widehat{R_{\fp}}$
	is an integral domain. From these, we deduce that $h$ is injective.
	
	Let $H:=\im(h)$.
	Since $h$ is injective, it is isomorphic  to $\widehat{R_{\fp}}$, and in view of $(+)$ we deduce that  
	\begin{equation*}
	\Ext^i_R(E_R(R/ \fp),R)\simeq \left\{
	\begin{array}{rl}
	\frac{	\widehat{R_{\fp}^{\mu}} }{H}	& \  \   \   \   \   \ \  \   \   \   \   \ \text{if } i=d\\
	0&\  \   \   \   \   \ \  \   \   \   \   \ \text{otherwise} 
	\end{array} \right.
	\end{equation*}
This is what we want to prove.
\end{proof}

\begin{corollary}
Adopt the above notation and suppose in addition $R/ \fp$ is not complete. Then  $E_R(R/ \fp)$
is not cohomologically  zero.
\end{corollary}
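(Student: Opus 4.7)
My plan is to extract the corollary as a direct rank-counting consequence of the preceding lemma, combined with the Schenzel description of $E_R(R/\fp)^v$ that was recalled there.

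By the lemma, $\Ext^i_R(E_R(R/\fp),R)$ vanishes for every $i\neq d$, while
$$\Ext^d_R(E_R(R/\fp),R)\;\cong\;\widehat{R_\fp^\mu}/H,\qquad H\cong\widehat{R_\fp},$$
with $H$ embedded in $\widehat{R_\fp^\mu}$ via the map $h$ constructed there. Thus $E_R(R/\fp)$ is cohomologically zero if and only if this single quotient vanishes, equivalently if and only if the embedding $H\hookrightarrow \widehat{R_\fp^\mu}$ is surjective. I argue by contraposition: assume surjectivity. Since $\widehat{R_\fp^\mu}$ is a free $\widehat{R_\fp}$-module of rank $\mu$ while $H$ is free of rank one, surjectivity forces $\mu=1$, and then forces $h$ to be an isomorphism. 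Combining $\mu=1$ with the identification $E_R(R/\fp)^v\cong\widehat{R_\fp^\mu}$ used inside the lemma yields the equality $E_R(R/\fp)^v\cong\widehat{R_\fp}$.

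The last step is to translate this Matlis-dual identification into the statement that $R/\fp$ is complete. I would mimic the argument of Proposition \ref{extk}(a) applied to the $1$-dimensional local Gorenstein ring $S:=R/\fp$: note that $E_R(R/\fp)=E_S(S/\fn)$ with $\fn:=\fm/\fp$, apply the Matlis functor over the complete local ring $\widehat{S}$, and use that the $\widehat{R_\fp}$-rank of $E_R(R/\fp)^v$ collapses to one exactly when the canonical map $S\to \widehat{S}$ is an isomorphism. Equivalently, $\widehat{S}$ as an $S$-module equals $S$ itself, i.e.\ $R/\fp$ is complete, contradicting the hypothesis.

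\textbf{Main obstacle.} The delicate point is the step $\mu=1\Longrightarrow R/\fp$ complete, which depends on the structural content of the Schenzel formula \cite[Page 2392]{sch} cited in the lemma: the cardinal $\mu$ encodes the discrepancy between $R/\fp$ and $\widehat{R/\fp}$, and is strictly larger than one (in fact infinite) exactly when $R/\fp$ fails to be complete. Everything else in the argument is bookkeeping around the explicit description of $\Ext^d_R(E_R(R/\fp),R)$ already obtained.
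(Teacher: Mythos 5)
Your argument is organized as a contraposition but uses exactly the same two ingredients as the paper: the structure of $\Ext^d_R(E_R(R/\fp),R)\cong\widehat{R_\fp^\mu}/H$ with $H\cong\widehat{R_\fp}$ from the preceding lemma, and Schenzel's characterization relating $\mu$ to the completeness of $R/\fp$. The rank-counting step (surjectivity of $H\hookrightarrow\widehat{R_\fp^\mu}$ forces $\mu=1$) is sound. The paper's own proof runs the direct implication, citing \cite[Proposition 4.3]{sch} to obtain $\mu>1$ from the non-completeness of $R/\fp$, and then observes the quotient is nonzero; logically these are the same.

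However, your proposed \emph{derivation} of the Schenzel implication ($\mu=1\Rightarrow R/\fp$ complete) contains a genuine error. The identification $E_R(R/\fp)=E_S(S/\fn)$ with $S:=R/\fp$ is false: for a nonmaximal prime $\fp$, one has $E_R(R/\fp)\cong E_{R_\fp}(\kappa(\fp))$, obtained by \emph{localizing} at $\fp$, not by passing to the quotient ring $R/\fp$ and taking an envelope there. For instance, with $R=k[[x,y]]$ and $\fp=(x)$, $E_R(R/\fp)$ is an $R_{(x)}$-module supported at the generic point of $V(\fp)$, whereas $E_S(S/\fn)$ for $S=k[[y]]$ is the $\fm/\fp$-torsion module $E_{k[[y]]}(k)$; these are unrelated. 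Also, $S=R/\fp$ is merely a one-dimensional local domain, not necessarily Gorenstein, so one cannot simply invoke Proposition \ref{extk}(a) for it. The remedy is to do exactly what the paper does: cite \cite[Proposition 4.3]{sch} for the fact that $\mu>1$ whenever $R/\fp$ is not complete, rather than trying to rederive it by translating the problem to $E_S(S/\fn)$.
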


\begin{proof}
 By a result of Schenzel \cite[Proposition 4.3]{sch} we know $\mu>1$.
 Since 	$$\Ext^d_R(E_R(R/ \fp),R)=	{	\widehat{R_{\fp}^{\mu}} }/{H}$$  and $H\cong\widehat{R_{\fp} }$, we deduce that $\Ext^d_R(E_R(R/ \fp),R)$ is not zero.
\end{proof}
\iffalse
Suppose $R$ is a local ring of a polynomial ring over the real numbers in $m$ variables with $m\geq n+3$. Then
Osofsky \cite{os} proved that $$\pd_R(\mathcal{Q}(R))=n+2 \Longleftrightarrow2^{\aleph_n}=\aleph_{n+1}.$$\fi
Now, we state three corollaries on projective dimension of certain injective modules.

\begin{corollary}
	Let $(R,\fm)$ be a regular  local ring  of dimension $d>0$ and $\fp$ be a prime ideal of height $d-1$ such that $R/ \fp$ is not complete. Then  $\pd_R(E_R(R/ \fp))=d$.
\end{corollary}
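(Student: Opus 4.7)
The plan is short because the preceding lemma and corollary already carry the main content. Since $R$ is regular local of Krull dimension $d$, the Auslander–Buchsbaum–Serre theorem gives $\gd(R)=d$, hence automatically $\pd_R(E_R(R/\fp)) \le d$. The task is to establish the matching lower bound $\pd_R(E_R(R/\fp)) \ge d$.

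For the lower bound I would invoke the standard characterization of projective dimension via Ext-vanishing: if $\Ext^d_R(E_R(R/\fp),R) \ne 0$ then necessarily $\pd_R(E_R(R/\fp)) \ge d$. But this is exactly the content of the corollary immediately preceding the statement, which asserts that under the hypothesis that $R/\fp$ is \emph{not} complete one has $\Ext^d_R(E_R(R/\fp),R) \ne 0$ (coming from Schenzel's inequality $\mu > 1$ together with the computation $\Ext^d_R(E_R(R/\fp),R) \cong \widehat{R_{\fp}^{\mu}}/H$ with $H \cong \widehat{R_{\fp}}$, so the quotient is nonzero).

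Combining the two bounds yields $\pd_R(E_R(R/\fp)) = d$. I do not foresee any serious obstacle here; the only delicate point has already been handled, namely ensuring that the Ext module at the top spot is genuinely nonzero, which is where the non-completeness of $R/\fp$ enters via Schenzel's result. A minor remark worth including in the write-up is that this recovers, in the spirit of Osofsky, an explicit computation of $\pd_R$ of an injective envelope $E_R(R/\fp)$ for primes $\fp$ of coheight one, whereas the classical Osofsky theorem concerns $\pd_R(\mathcal{Q}(R))$ and is sensitive to the generalized continuum hypothesis — here, by contrast, the answer is simply $d$ without any set-theoretic hypothesis, as soon as $R/\fp$ fails to be complete.
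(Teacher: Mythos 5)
Your argument is correct and is essentially identical to the paper's: both deduce the lower bound $\pd_R(E_R(R/\fp)) \ge d$ from the nonvanishing $\Ext^d_R(E_R(R/\fp),R) \ne 0$ established in the preceding corollary, and both obtain the matching upper bound from $\gd(R)=d$. No further comment is needed.
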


\begin{proof}
	We observed in the previous corollary  that	$\Ext^d_R(E_R(R/ \fp),R)$ is not zero. Since global-dimension
	of $R$ is $d$ we get the desired claim.
\end{proof}

\begin{observation}\label{oi}(After Matlis)
	Let $(R,\frak{m})$ be a  Gorenstein local ring. Then $\pd(E_R(R/\fm))\leq \dim R$.
\end{observation}

\begin{proof}  Let $d:=\dim R$ and $\underline{x}=x_1,
	\dots, x_d$ be a system of parameters for $R$. Recall that the \v{C}ech complex of $R$ with respect to $\underline{x}$
	has the form $$\check{C}:=0\longrightarrow R \longrightarrow \oplus R_{x_i} \longrightarrow \dots \longrightarrow \oplus
	R_{x_1\dots \hat x_i \dots x_d}\longrightarrow R_{x_1 \dots x_d} \longrightarrow 0,$$ and it provides a flat resolution
	for the $R$-module $\text{H}_{\frak{m}}^d(R)=E_R(R/\fm)$.
	Thus,
	it has finite projective dimension. It turns out that
	$\pd(E_R(R/\fm))\leq \dim R$.
\end{proof}

Let us show that this is sharp:

\begin{corollary}
	Let $(R,\fm,k)$ be a regular  local ring  of dimension $d$. Then  $\pd_R(E_R(k))=d$.
\end{corollary}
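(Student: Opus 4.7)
The plan is to sandwich $\pd_R(E_R(k))$ between two values that agree. For the upper bound, I would use that a regular local ring of dimension $d$ has global dimension $d$, so immediately $\pd_R(E_R(k))\leq d$. The whole content is therefore the lower bound $\pd_R(E_R(k))\geq d$, and for this I would invoke Proposition \ref{extk} directly: since a regular local ring is a Gorenstein integral domain of dimension $d>0$, the hypotheses of that proposition are met, and it yields $\Ext^d_R(E_R(k),R)\cong \widehat{R}$.

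Because $\widehat{R}$ is a nonzero $R$-module (it contains $R$ faithfully), we conclude $\Ext^d_R(E_R(k),R)\neq 0$, hence $\pd_R(E_R(k))\geq d$. Combining both inequalities gives equality. Structurally this mirrors the preceding corollary about $E_R(R/\fp)$ with $\Ht(\fp)=d-1$, where non-vanishing of the top $\Ext$ plus the global-dimension bound forced equality; here the setup is even cleaner because we do not need the Schenzel estimate on the Bass number, we only need that $\widehat{R}\neq 0$.

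I do not foresee a serious obstacle: the only subtle point is noticing that Proposition \ref{extk} applies verbatim (regular $\Rightarrow$ Gorenstein $\Rightarrow$ applicable, and regular local rings are domains), after which the argument is two lines. If one wanted to avoid citing Proposition \ref{extk}, one could directly compute $\Hom_R(E_R(k),-)$ along the minimal injective resolution $0\to R\to \bigoplus_{\Ht(\fq)=0}E_R(R/\fq)\to\cdots\to E_R(k)\to 0$ of $R$, using that $\Hom_R(E_R(k),E_R(R/\fq))$ vanishes unless $\fq=\fm$ and equals $\widehat{R}$ in that case; the complex collapses to $\widehat{R}$ in cohomological degree $d$, giving the same conclusion.
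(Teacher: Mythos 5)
Your argument is exactly the paper's: the paper likewise cites Proposition \ref{extk} to get $\Ext^d_R(E_R(k),R)\neq 0$ and combines it with $\gd(R)=d$ to conclude $\pd_R(E_R(k))=d$. Your sketch of a direct computation via the minimal injective resolution is just the proof of Proposition \ref{extk} itself, so no new route is being taken.
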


\begin{proof}
We may assume $d>0$.	Recall from  Proposition  \ref{extk} that	$\Ext^d_R(E_R(k),R)\neq0$. Since global-dimension
	of $R$ is $d$ we get the desired claim.
\end{proof}
In Observation \ref{oic}, see below, we will replace the above regularity assumption with the   complete and Gorenstien assumption.
\begin{corollary}
	Let $(R,\fm)$ be a regular  ring  and $\fp$ be a prime ideal. Then  $\pd_R(E_R(R/ \fp))\geq \Ht(\fp)$.
\end{corollary}

\begin{proof}
It is easy to see that $\pd_{R}(\mathcal{M})\geq \pd_{R_\fp}(\mathcal{M}_\fp)$ for any $R$-module $\mathcal{M}$ and any $\fp$. Since $(E_R(R/ \fp))_{\fp}=E_{R_\fp}(k(\fp))$, the desired claim is in the previous corollary.
\end{proof}

\section{The realization problem}
We start with
\begin{definition}	(Jensen-Nunke) Let $i>0$.
	Recall from \cite{j2} that a module $\mathcal{M}$ is called $i$-realizable if there are modules $\mathcal{N},\mathcal{L}$
	such that $\mathcal{M}\cong \Ext^i_R(\mathcal{N},\mathcal{L})$. When $i:=\dim R<\infty$ we  say  $\mathcal{M}$ is realizable.\end{definition}

The following
result is not new. It was proved by Jensen \cite[Theorem 2]{j2} via some spectral sequences and derived functors $\vpl^{(i)}$. The following proof is  elementary.

\begin{corollary}\label{cr}
	Let $(R,\fm)$ be a   regular of positive dimension   $d$    and $M$ be finitely generated. Then
	$\widehat{M}\cong \Ext^d_R(E_R(k),M)$. In particular,
	if $M$ is complete then $M$ is realizable.
\end{corollary}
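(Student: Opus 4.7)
The plan is to identify the functor $F(-) := \Ext^d_R(E_R(k),-)$ as $\widehat{R}\otimes_R(-)$ on finitely generated modules, and then specialize to $M$ to read off the first claim; the "in particular" assertion is then immediate from the definition of realizability.

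First I would note that, since $R$ is a regular local ring of dimension $d$, it is in particular a Gorenstein local integral domain, so Proposition \ref{extk} applies and yields the base computation $F(R) = \Ext^d_R(E_R(k),R)\cong \widehat{R}$, with $\Ext^i_R(E_R(k),R)=0$ for $i\neq d$. Next, because $\gd(R)=d$, the higher Ext $\Ext^{d+1}_R(E_R(k),-)$ vanishes identically, and so the top Ext $F=\Ext^d_R(E_R(k),-)$ is automatically right exact; it also preserves finite direct sums for the usual formal reason. This mirrors the Watts-style arguments used repeatedly in the paper (see the proofs of Theorem \ref{high} and Corollary \ref{quasi}).

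With $F$ established as right exact and additive, pick a finite free presentation $R^m\xrightarrow{A} R^n\to M\to 0$. Applying $F$ gives the exact sequence
$$F(R^m)\longrightarrow F(R^n)\longrightarrow F(M)\longrightarrow 0,$$
and by the additivity identification $F(R^m)\cong \widehat{R}^m$, $F(R^n)\cong \widehat{R}^n$. Naturality of the isomorphism $F(R)\cong \widehat{R}$ under multiplication by $r\in R$ (which on $\widehat{R}$ is again multiplication by the image of $r$) shows that the induced map $\widehat{R}^m\to \widehat{R}^n$ is given by the matrix $A$ viewed in $\widehat{R}$. Hence
$$F(M)\;\cong\;\coker\bigl(\widehat{R}^m\xrightarrow{A}\widehat{R}^n\bigr)\;\cong\;\widehat{R}\otimes_R\coker(A)\;\cong\;\widehat{R}\otimes_R M\;\cong\;\widehat{M},$$
the last isomorphism being the standard fact that completion of a finitely generated module is the tensor with $\widehat{R}$. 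For the second assertion, if $M$ is already complete then $M\cong \widehat{M}\cong \Ext^d_R(E_R(k),M)$, so taking $\mathcal{N}=E_R(k)$ and $\mathcal{L}=M$ in the definition exhibits $M$ as realizable.

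The only non-routine point is Step 3, verifying that under the natural identification $F(R^n)\cong \widehat{R}^n$ the morphism $F(A)$ really is multiplication by $A$ on $\widehat{R}^n$; this is purely formal via functoriality and additivity of $F$, but it is the step one must not skip, since it is what turns the abstract isomorphism $F(R)\cong \widehat{R}$ into a natural isomorphism of functors on finitely generated modules.
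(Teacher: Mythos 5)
Your proof is correct and takes essentially the same route as the paper: both identify $F(-) := \Ext^d_R(E_R(k),-)$ as $\widehat{R}\otimes_R(-)$ on finitely generated modules by establishing right-exactness, additivity, and the base value $F(R)\cong\widehat{R}$ from Proposition~\ref{extk}. The only cosmetic difference is that you unwind Watts' theorem by hand through a free presentation, whereas the paper simply invokes Watts' theorem directly.
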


\begin{proof}
	Let $(-)$ be a finitely generated module and set $F(-):=\Ext^d_R(E_R(k),-)$. Since injective dimension
	of any module is at most $d$, $F(-)$ is right exact. Also, it preserves finite direct sum.
	By Watts' theorem, $$F(-)\cong F(R)\otimes_R (-)$$ for any finitely generated module.
	Recall from  Proposition \ref{extk} that $F(R)=\widehat{R}$. By plugging this in Watts' isomorphism, we get that $$\widehat{M}\cong M\otimes_R\widehat{R}\cong \Ext^d_R(E_R(k),M),$$as claimed.
\end{proof}

\begin{remark}
  If  we allow the module $\mathcal{N}$ in the above definition to be finitely generated, the story
	of Corollary \ref{cr}        will changes, see Fact \ref{a}.\end{remark}
	
\begin{example} The finitely generated assumption of $M$ in Corollary \ref{cr} is important:
	Let $R$ be a discrete valuation domain which is not a field. Then $\pd_R(\mathcal{Q}(R))=1$.
	There is a module $\mathcal{M}$ such that $\Ext^1_R(\mathcal{Q}(R),\mathcal{M})\neq 0$. Let $d\mathcal{M}$ be the largest divisible submodule of $\mathcal{M}$. In view of
	\cite[Theorem 7.1]{n}, $\frac{\mathcal{M}}{d\mathcal{M}}$ is not realizable.
\end{example}

Recall from \cite{mat2} that a module is called corosion if  $\Hom_R(\mathcal{Q}(R),\mathcal{M})=\Ext^1_R(\mathcal{Q}(R),\mathcal{M})=0$.
\begin{example}(Matlis)
Any cotorsion module $\mathcal{M}$ over an integral domain is $1$-realizable.
\end{example}

\begin{proof}
We apply  $\Hom_R(-,\mathcal{M})$ to $0\to R\to \mathcal{Q}(R)\to \frac{\mathcal{Q}(R)}{R}\to0$ and observe $$0=\Hom_R(\mathcal{Q}(R),\mathcal{M})\to \Hom_R(R,\mathcal{M})\lo \Ext^1_R(\mathcal{Q}(R)/R,\mathcal{M})\lo\Ext^1_R(\mathcal{Q}(R),\mathcal{M})=0,$$i.e., $\mathcal{M}\cong \Hom_R(R,\mathcal{M})\cong \Ext^1_R(\mathcal{Q}(R)/R,\mathcal{M}).$
\end{proof}

In order to extend  Corollary \ref{cr}, we need to state the following:

\begin{definition}	 Let $i>0$.
	We say a module $\mathcal{M}$ is called  homologically realizable at level $i$ if there are modules $\mathcal{N},\mathcal{L}$
	such that $\mathcal{M}\cong \Tor^R_i(\mathcal{N},\mathcal{L})$. When $i:=\dim R<\infty$ we  say  $\mathcal{M}$ is homologically  realizable.
 \end{definition}

\begin{fact}\label{toreal}(Matlis)
Let $R$ be a  domain.	Any torsion module  $\mathcal{M}$  is  homologically  realizable at level one. In particular, over $1$-dimensional rings a module is  homologically  realizable iff it is torsion.
\end{fact}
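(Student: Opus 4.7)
The plan is to realize any torsion module $\mathcal{M}$ as a $\Tor_1$ by exploiting the tautological short exact sequence $0\to R\to \mathcal{Q}(R)\to \mathcal{Q}(R)/R\to 0$. Since $\mathcal{Q}(R)$ is a flat $R$-module (being a localization), tensoring with $\mathcal{M}$ and taking the long exact sequence of $\Tor$ yields
$$0\to \Tor_1^R(\mathcal{Q}(R)/R,\mathcal{M})\to \mathcal{M}\stackrel{\alpha}\lo \mathcal{Q}(R)\otimes_R\mathcal{M}\to (\mathcal{Q}(R)/R)\otimes_R\mathcal{M}\to 0,$$
where $\alpha(m)=1\otimes m$. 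The only nontrivial ingredient is to identify $\ker(\alpha)$ with $\tor(\mathcal{M})$: with $S:=R\setminus\{0\}$ one has $\mathcal{Q}(R)\otimes_R\mathcal{M}\cong S^{-1}\mathcal{M}$, and an element $m\in \mathcal{M}$ dies in $S^{-1}\mathcal{M}$ iff $sm=0$ for some $s\in S$. Hence $\ker(\alpha)=\tor(\mathcal{M})$, which coincides with $\mathcal{M}$ when $\mathcal{M}$ is torsion. Setting $\mathcal{N}:=\mathcal{Q}(R)/R$ and $\mathcal{L}:=\mathcal{M}$ gives $\mathcal{M}\cong \Tor_1^R(\mathcal{N},\mathcal{L})$, proving the first assertion.

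For the ``in particular'' part, the assumption $\dim R=1$ is exactly what forces homologically realizable to mean $\Tor_1$. One direction is the first assertion. For the converse, I would exploit that over a domain any higher $\Tor$ is automatically torsion: if $\mathcal{M}\cong \Tor_1^R(\mathcal{N},\mathcal{L})$, then because localization is exact and commutes with $\Tor$,
$$S^{-1}\mathcal{M}\cong \Tor_1^{\mathcal{Q}(R)}(S^{-1}\mathcal{N},S^{-1}\mathcal{L})=0,$$
since $\mathcal{Q}(R)$ is a field and every module over a field is flat. Thus every element of $\mathcal{M}$ is annihilated by some nonzero $s\in R$, i.e., $\mathcal{M}$ is torsion.

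Neither step poses a serious obstacle: the construction is essentially forced once one remembers the sequence $0\to R\to \mathcal{Q}(R)\to \mathcal{Q}(R)/R\to 0$, and the converse only uses flat base change and that fields have global dimension zero. The single point where care is needed is the identification $\ker(\alpha)=\tor(\mathcal{M})$, which is the standard description of the kernel of $\mathcal{M}\to S^{-1}\mathcal{M}$.
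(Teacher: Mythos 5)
Your proof is correct and takes essentially the same route as the paper: tensoring the short exact sequence $0\to R\to \mathcal{Q}(R)\to \mathcal{Q}(R)/R\to 0$ with $\mathcal{M}$ and using flatness of $\mathcal{Q}(R)$ together with the vanishing of $\mathcal{Q}(R)\otimes_R\mathcal{M}$ for torsion $\mathcal{M}$ (the paper invokes this vanishing directly rather than via the kernel description $\ker(\mathcal{M}\to S^{-1}\mathcal{M})=\tor(\mathcal{M})$, but this is only a cosmetic difference). You also supply a proof of the converse in the ``in particular'' clause (localizing $\Tor_1$ at $S=R\setminus\{0\}$ to see it vanishes over the field $\mathcal{Q}(R)$), which the paper leaves unremarked; this is a worthwhile addition.
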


\begin{proof}
Since $\mathcal{M}$ is torsion, $\mathcal{M}\otimes_R\mathcal{Q}(R)=0$. Apply $-\otimes_R\mathcal{M}$
to $0\to R\to \mathcal{Q}(R) \to \frac{\mathcal{Q}(R)}{R}\to 0$ to deduce
$$ 0=\Tor^{R}_1(\mathcal{Q}(R),\mathcal{M})\to \Tor^{R}_1( {\mathcal{Q}(R)}/{R},\mathcal{M}) \to  R\otimes_R\mathcal{M} \to \mathcal{M}\otimes_R\mathcal{Q}(R)=0,$$because $\mathcal{Q}(R)$ is flat. So, $\mathcal{M} \cong\Tor^{R}_1({\mathcal{Q}(R)}/{R},\mathcal{M})$.
\end{proof}

\iffalse
Let $(R,\fm)$ be a complete Cohen-Macaulay local ring     of  dimension  $d>0$    and let $M$ be finitely generated.
Then $M\cong \Ext^{d}_R(\HH^{d}_{\fm}(R),M)$.\fi

\begin{proposition}\label{rtor}
	Let $(R,\fm)$ be a  local ring , $\underline{x}:=x_1,\ldots,x_n$ be a regular sequence
	and let $ \mathcal{A}$ be artinian.
	Then $ \mathcal{A}$ is homologically realizable at level $i$ for each $0<i\leq n$. In fact,
	$ \mathcal{A}\cong \Tor^R_i(\HH^i_{\underline{x}_i}(R), \mathcal{A})$ where $\underline{x}_i:=x_1,\ldots,x_i$.
\end{proposition}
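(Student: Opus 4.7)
The plan is to use the \v Cech complex $\check{C}^{\bullet}(\underline{x}_i;R)$ of the regular sequence $\underline{x}_i=x_1,\ldots,x_i$ as a flat resolution (up to a shift) of $\HH^i_{\underline{x}_i}(R)$. Each term $\check{C}^k=\bigoplus_{1\le j_1<\cdots<j_k\le i}R_{x_{j_1}\cdots x_{j_k}}$ is a localization, hence flat over $R$. Because $\underline{x}_i$ is a regular sequence of length $i$, the standard fact $\HH^j_{\underline{x}_i}(R)=0$ for $j\ne i$ shows that $\check{C}^{\bullet}(\underline{x}_i;R)$ has cohomology concentrated in degree $i$, and there it equals $\HH^i_{\underline{x}_i}(R)$. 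Re-indexing homologically by $C_k:=\check{C}^{i-k}$, one obtains a flat resolution of $\HH^i_{\underline{x}_i}(R)$ of length $i$.

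Consequently $\Tor^R_j(\HH^i_{\underline{x}_i}(R),\mathcal{A})$ equals the $j$-th homology of $\check{C}^{\bullet}(\underline{x}_i;R)\otimes_R\mathcal{A}$ (with the homological indexing above). The key observation now is that since $\mathcal{A}$ is artinian and each $x_j$ lies in $\fm$, multiplication by $x_j$ on $\mathcal{A}$ is locally nilpotent; therefore $\mathcal{A}_{x_{j_1}\cdots x_{j_k}}=0$ whenever $k\ge 1$. This forces $\check{C}^k\otimes_R\mathcal{A}=0$ for every $k\ge 1$, so the tensored complex collapses to $\check{C}^0\otimes_R\mathcal{A}=\mathcal{A}$ sitting in (cohomological) degree $0$, i.e.\ (homological) degree $i$, with every differential trivially zero.

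Reading off the homology finishes the argument: $\Tor^R_j(\HH^i_{\underline{x}_i}(R),\mathcal{A})=0$ for $j\ne i$, while $\Tor^R_i(\HH^i_{\underline{x}_i}(R),\mathcal{A})\cong\mathcal{A}$, which is exactly the claimed realization and also realizability at each level $0<i\le n$. The only point requiring care is bookkeeping of the shift that turns the \v Cech cocomplex into a flat resolution in homological degree, together with the identification that the Tor of a module with a bounded complex of flats can be computed via the tensored complex; neither is substantive, so I expect no real obstacle. A sanity check at $i=1$ recovers the exact sequence $0\to\Tor^R_1(R_{x_1}/R,\mathcal{A})\to\mathcal{A}\to\mathcal{A}_{x_1}=0$, recovering Fact~\ref{toreal} in this artinian setting and confirming the base case.
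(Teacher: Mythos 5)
Your proof is correct and follows essentially the same route as the paper: view the \v Cech complex $\check{C}(\underline{x}_i;R)$ as a flat resolution of $\HH^i_{\underline{x}_i}(R)$, tensor with $\mathcal{A}$, and exploit that $\mathcal{A}$ is $\fm$-torsion. The only cosmetic difference is that you make the collapse explicit by noting $\mathcal{A}_{x_{j_1}\cdots x_{j_k}}=0$ for $k\ge 1$, whereas the paper invokes the identification $\HH_j\bigl(\check{C}(\underline{x}_i;R)\otimes_R\mathcal{A}\bigr)\cong\HH^{i-j}_{\underline{x}_i}(\mathcal{A})$ and reads off the value at $j=i$; these amount to the same computation.
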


 The following proof works for $\fm$-torsion modules.
 
\begin{proof}
Let $\fa_i:=(x_1,\ldots,x_i)$, and recall the natural isomorphism  $\HH^0_{\fa_i}( \mathcal{A})= \mathcal{A}$. This holds, because $ \mathcal{A}$  is an	$\fm$-torsion module.
Denote the $\check{C}heck$ complex of $R$  with respect to $\fa_i$ by $\check{C}(x_1,\ldots,x_i;R)$.
Since $\underline{x}$ is a regular sequence, it follows that $\check{C}(x_1,\ldots,x_i;R)$ 
is a flat resolution of $\HH^i_{\fa_i}(R)$, and consequently, it can be used for commuting tor-modules. Namely,
$$\Tor^R_j(\HH^i_{\fa_i}(R), \mathcal{A})\cong\HH_{j}(\check{C}(x_1,\ldots,x_i;R)\otimes_R \mathcal{A})\cong \HH^{j-i}_{\fa_i}( \mathcal{A}).$$
 Thus, $$ \Tor^R_i(\HH^i_{\fa_i}(R), \mathcal{A})\cong \HH^{0}_{\fa_i}( \mathcal{A})\cong  \mathcal{A},$$as claimed.
\end{proof}

\begin{corollary}
	Let $(R,\fm)$ be a  Cohen-Macaulay local ring of positive dimension and let $\{ \mathcal{A}_j\}$ be a directed family of artinian modules.
	Then $ \mathcal{A}:=\vil_j  \mathcal{A}_j$ is homologically realizable.
\end{corollary}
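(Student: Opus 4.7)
The plan is to reduce directly to the previous proposition by passing through the direct limit. Let $d:=\dim R>0$. Since $R$ is Cohen-Macaulay, there exists a system of parameters $\underline{x}:=x_1,\ldots,x_d$ that is a regular sequence. By definition, homological realizability is required at level $d$, so the candidate realization will be
$$\mathcal{A}\cong \Tor^R_d(\HH^d_{\underline{x}}(R),\mathcal{A}).$$

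First I would observe that every artinian $R$-module is $\fm$-torsion, and that the class of $\fm$-torsion modules is closed under direct limits, so $\mathcal{A}=\vil_j\mathcal{A}_j$ is itself $\fm$-torsion. By the remark preceding the proof of Proposition \ref{rtor}, the argument given there actually works for any $\fm$-torsion module, not only for artinian ones; one simply uses that $\HH^0_{\fm}(\mathcal{A})=\mathcal{A}$ for any $\fm$-torsion $\mathcal{A}$ together with the fact that the \v{C}ech complex $\check{C}(\underline{x};R)$ is a flat resolution of $\HH^d_{\underline{x}}(R)$ (this last point uses the regularity of $\underline{x}$). Therefore I could finish in one step by applying that proof directly to $\mathcal{A}$.

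Alternatively, and what I would actually write, is to invoke Proposition \ref{rtor} term-by-term and then pass the limit through $\Tor$. For each $j$ the module $\mathcal{A}_j$ is artinian, hence
$$\mathcal{A}_j\cong \Tor^R_d(\HH^d_{\underline{x}}(R),\mathcal{A}_j).$$
Since $\Tor$ commutes with direct limits in either variable,
$$\Tor^R_d(\HH^d_{\underline{x}}(R),\mathcal{A})
=\Tor^R_d\bigl(\HH^d_{\underline{x}}(R),\vil_j\mathcal{A}_j\bigr)
\cong \vil_j\Tor^R_d(\HH^d_{\underline{x}}(R),\mathcal{A}_j)
\cong \vil_j\mathcal{A}_j=\mathcal{A}.$$
Taking $\mathcal{N}:=\HH^d_{\underline{x}}(R)$ and $\mathcal{L}:=\mathcal{A}$ exhibits $\mathcal{A}$ as homologically realizable.

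The only point that requires a little care is the legitimacy of extending Proposition \ref{rtor} from the artinian to the $\fm$-torsion setting (or, in the alternative presentation, the fact that $\Tor$ commutes with filtered colimits, which is standard). Neither is a genuine obstacle: the first is explicitly flagged in the paper, and the second follows at once from the construction of $\Tor$ via a flat resolution of the first argument, because tensor product commutes with direct limits.
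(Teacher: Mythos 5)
Your main argument (apply Proposition \ref{rtor} term-by-term and commute $\Tor$ with the direct limit) is exactly the paper's proof, up to the cosmetic point that you write $\HH^d_{\underline{x}}(R)$ where the paper writes $\HH^d_{\fm}(R)$; these agree since $\underline{x}$ is a full system of parameters. Your first suggested shortcut (note that $\mathcal{A}$ is $\fm$-torsion and apply the remark preceding Proposition \ref{rtor} directly) is also correct and a bit more direct, but both routes are essentially the same as the paper's.
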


\begin{proof}Let $d:=\dim R$.
In view of Proposition \ref{rtor}, we know $ \Tor^R_d(\HH^d_{\fm}(R), \mathcal{A}_j) \cong  \mathcal{A}_j.$ Since $\Tor$-functors commutes with directed limits, we have $$ \Tor^R_d(\HH^d_{\fm}(R), \mathcal{A})\cong \vil\Tor^R_d(\HH^d_{\fm}(R), \mathcal{A}_j) \cong \vil  \mathcal{A}_j= \mathcal{A}.$$This is what we want to prove it.
\end{proof}

Let us give some examples that can't be written as a direct limit of artinian:

\begin{example}
	Let $I$ be any ideal. Then the conormal module $\frac{I}{I^2}$ is homologically realizable at level one.
There is a choice for $I$ such that the conormal module is not of finite length. Indeed, use  the natural isomorphism $\Tor^{R}_1(\frac{R}{I},\frac{R}{J})\cong \frac{I \cap J}{IJ}$.
\end{example}

Let $M,N $ be finitely generated  modules. The notation
$P(M, N)$ stands  for the set of  all $M\to N$ which   factor through free modules. By   stable-hom,  we mean
 $\underline{\Hom}_R(M,N) :=\frac{\Hom_R(M, N)}{P(M, N)}$.

\begin{example}(Auslander)
	The stable-hom    is homologically realizable at level one. Indeed, $$\underline{\Hom}_R(M,N)=\coker(f:\Hom_R(M,R)\otimes_RN\to\Hom_R(M,N)).$$ Recall from  Discussion \ref{ausseq} that
	$\coker(f)=\Tor_1^R(\Tr M,N)$.
\end{example}

Now, we  present a simple proof of \cite[Proposition 3]{j2} by Jensen:

\begin{proposition}\label{j3}
	Let  $(R,\fm)$  be a  complete local ring,  $\underline{x}:=x_1,\ldots,x_n$ be a regular sequence
	and let $M$ be finitely generated. Then $M\cong \Ext^i_R(\HH^i_{{\underline{x}_i}}(R),M)$  for each $0<i\leq n$. In particular,
$M$ is  realizable at level $i$. 
\end{proposition}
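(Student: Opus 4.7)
The plan is to reduce the statement to Proposition \ref{rtor} (the $\Tor$-version for artinian modules) by passing through Matlis duality twice. Put $\fa_i:=(x_1,\ldots,x_i)$.

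First, since $R$ is complete local noetherian and $M$ is finitely generated, Matlis duality tells us that $M^v=\Hom_R(M,E_R(k))$ is an artinian $R$-module (in particular $\fm$-torsion, hence $\fa_i$-torsion) and that the biduality map $M\to M^{vv}$ is an isomorphism. Applying Proposition \ref{rtor} to the artinian module $M^v$ with the regular sequence $\underline{x}_i$, I obtain
$$M^v\;\cong\;\Tor^R_i\bigl(\HH^i_{\underline{x}_i}(R),M^v\bigr)\qquad(0<i\leq n).$$

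Next, I would invoke the standard Ext--Tor adjunction
$$\Tor^R_i(A,B)^v\;\cong\;\Ext^i_R(A,B^v),$$
which follows by taking a projective resolution $P_\bullet\to A$, using the exactness of $\Hom_R(-,E_R(k))$ (since $E_R(k)$ is injective) to commute cohomology with the Matlis functor, and then applying Hom--tensor adjunction termwise. Combining this with the previous display and with $M^{vv}\cong M$ yields
$$M\;\cong\;M^{vv}\;\cong\;\bigl(\Tor^R_i(\HH^i_{\underline{x}_i}(R),M^v)\bigr)^{v}\;\cong\;\Ext^i_R\bigl(\HH^i_{\underline{x}_i}(R),M^{vv}\bigr)\;\cong\;\Ext^i_R\bigl(\HH^i_{\underline{x}_i}(R),M\bigr),$$
which is the asserted isomorphism. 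The ``in particular'' clause is then immediate: taking $\mathcal{N}:=\HH^i_{\underline{x}_i}(R)$ and $\mathcal{L}:=M$ exhibits $M$ as an $i$-th Ext, so $M$ is $i$-realizable in the sense of the Jensen--Nunke definition.

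The only non-formal step is the Ext--Tor adjunction, and this is where one must be careful, but it is standard and requires no hypothesis beyond the injectivity of $E_R(k)$. Everything else is a bookkeeping consequence of the completeness of $R$ (used solely to guarantee $M\cong M^{vv}$ and the artinianness of $M^v$) together with the already-established Proposition \ref{rtor}. Thus the main work has already been done in \S7 and the new content of this proposition is precisely the Matlis-dualization of Proposition \ref{rtor}.
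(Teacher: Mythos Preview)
Your proposal is correct and mirrors the paper's own proof essentially line for line: set $\mathcal{A}:=M^v$, apply Proposition \ref{rtor} to the artinian module $\mathcal{A}$, then Matlis-dualize using the standard isomorphism $\Tor^R_i(A,B)^v\cong\Ext^i_R(A,B^v)$ together with $M\cong M^{vv}$. The paper is terser (it bundles the Ext--Tor adjunction under the phrase ``Matlis theory''), but the argument is identical.
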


\begin{proof}
 Let $\fa_i:=(x_1,\ldots,x_i)$ and $ \mathcal{A}:=M^v$. Then $ \mathcal{A}$ is artinian. Thanks to Proposition \ref{rtor} we know
	$$ \mathcal{A}\cong \Tor^R_i(\HH^i_{\fa_i}(R), \mathcal{A}).$$
	By taking another Matlis duality and using Matlis theory we observe that
$$
	M \cong M^{vv} = \mathcal{A}^v \cong\Tor^R_i(\HH^i_{\fa_i}(R),M^v)^v 
	\cong\Ext^i_R(\HH^i_{\fa_i}(R),M^{vv}) 
	\cong\Ext^i_R(\HH^i_{\fa_i}(R),M),$$
as claimed.
\end{proof}

\begin{corollary}\label{ci}
	Let $(R,\fm)$ be a complete Cohen-Macaulay local ring     of  dimension  $d>0$    and let $M$ be finitely generated.
	Then $M\cong \Ext^{d}_R(\HH^{d}_{\fm}(R),M)$. In particular,
	$M$ is  realizable.
\end{corollary}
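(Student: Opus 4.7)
The plan is to obtain this as a direct specialization of Proposition \ref{j3}, once the top local cohomology at $\fm$ is identified with the top local cohomology at a parameter ideal. The Cohen-Macaulay hypothesis is exactly what is needed to promote a system of parameters to a regular sequence of length $d$.

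First I would invoke the Cohen-Macaulay assumption to choose a system of parameters $\underline{x}:=x_1,\ldots,x_d$ in $\fm$. Since $R$ is Cohen-Macaulay of dimension $d$, such a system of parameters is automatically an $R$-regular sequence of length $d$. Moreover, $\sqrt{(\underline{x})}=\fm$, so by the independence of local cohomology on the radical of the defining ideal, one has
\[
\HH^d_{\fm}(R)\;\cong\;\HH^d_{(\underline{x})}(R).
\]

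Next, I would apply Proposition \ref{j3} directly, taking $n:=d$, the regular sequence $\underline{x}$, and the index $i:=d$. Since $R$ is complete and $M$ is finitely generated, the proposition yields
\[
M\;\cong\;\Ext^d_R\bigl(\HH^d_{(\underline{x})}(R),M\bigr).
\]
Combining this with the identification of $\HH^d_{\fm}(R)$ in the previous step gives $M\cong \Ext^d_R(\HH^d_{\fm}(R),M)$, which is the displayed isomorphism.

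Finally, since $d=\dim R<\infty$, this isomorphism exhibits $M$ in the form $\Ext^d_R(\mathcal{N},\mathcal{L})$ with $\mathcal{N}:=\HH^d_{\fm}(R)$ and $\mathcal{L}:=M$, so by the definition recalled at the start of \S7, $M$ is realizable. There is no real obstacle here; the only subtle point is verifying that a system of parameters in a Cohen-Macaulay local ring is a regular sequence so that Proposition \ref{j3} applies with $i=n=d$. Once that is in place, the result is immediate.
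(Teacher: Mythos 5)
Your proof is correct and follows essentially the same route as the paper: identify $\HH^d_{\fm}(R)$ with $\HH^d_{(\underline{x})}(R)$ for a system of parameters $\underline{x}$, use the Cohen-Macaulay hypothesis to see that $\underline{x}$ is a regular sequence, and then apply Proposition \ref{j3} with $i=n=d$. You have merely made explicit the step (system of parameters is a regular sequence) that the paper leaves tacit.
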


\begin{proof}
It is enough to note that $\HH^{d}_{\fm}(R)\cong \HH^{d}_{\underline{x}}(R)$, where $\underline{x}$ is a full parameter sequence.
\end{proof}

\begin{observation}\label{oic}
	Let $(R,\frak{m})$ be a complete Gorenstein local ring. Then $\pd(E_R(R/\fm))=\dim R$.
\end{observation}

\begin{proof}  Recall from Observation \ref{oi} that
	$\pd(E_R(R/\fm))\leq \dim R$.
	Recall from Corollary \ref{ci}  that $ \Ext^{d}_R(E_R(R/\fm),R)\cong R\neq 0$. So, $\pd(E_R(R/\fm))=\dim R$.
\end{proof}The following result extends a result of Matlis \cite[Theorem 6]{matcanada} from 1-dimensional case to any dimension. Also, a Cohen-Macaulay assumption from \cite[Corollary 3.13]{matne}.
\begin{corollary}\label{mat2}
	Let $(R,\fm)$ be a  local ring. Then      $R$ is Gorenstein iff $\pd(E_R(R/\fm))<\infty$.
\end{corollary}

\begin{proof}
One side direction is in Observation \ref{oic}, it is easy to remove the complete assumption.
Now, suppose $t:=\pd(E_R(R/\fm))<\infty$. Let $$0\lo F_t\lo \ldots\lo F_0\lo E_R(R/\fm)\lo 0$$ be a free resolution of $E_R(R/\fm)$. Apply Matlis duality, gives us the following injective resolution $$0\lo \widehat{R}\lo F_0^v\lo\ldots \lo F_t^v\lo 0. $$We see $i:=\id_R(\hat{R})<\infty$.
Following Cartan-Eilenberg (see \cite[VI.4.1.3]{ce}) we have $$0=\Ext^j_R(R/\fm, \hat{R})\cong\Ext^j_{\hat{R}}(R/\fm\otimes_R\hat{R}, \hat{R})\cong\Ext^j_{\hat{R}}(\hat{R}/\fm_{\hat{R}} , \hat{R})$$for all $j>i$. 
From this, $\id_{\hat{R}}(\hat{R})<\infty$, i.e.,  $\hat{R}$ is Gorenstein. Consequently, $R$  is Gorenstein.
\end{proof}

The non Cohen--Macaulay locus of an excellent ring $R$ is $$\nCM( R):=\{\fp\in\Spec(R):R_{\fp}\textit{ is not Cohen--Macaulay}\}$$ which is a closed subset of $\Spec(R)$ with respect to the Zariski topology. So, there is an ideal $I$ such that $ nCM( R)=\V(I)$ and we call it the ideal of definition.

\begin{corollary}
	Let $(R,\fm)$ be excellent with the ideal of definition $I$. Then $\pd_R(E_R(R/\fp))= \infty$
	for all $\fp\in\V(I)$.
\end{corollary}

\begin{proof}
	Let  $\fp\in\V(I)$. Then $R_{\fp}$
	is not Cohen-Macaulay. In particular, it is not Gorenstein. By Corollary \ref{mat2}
	$\pd_{R_{\fp}}(E_{R_{\fp}}(R_{\fp}/\fp R_{\fp}))=\infty$. Since $\pd_R(E_R(R/\fp))\geq  \pd_{R_{\fp}}(E_{R_{\fp}}(R_{\fp}/\fp R_{\fp}))$ we conclude that $\pd_R(E_R(R/\fp))= \infty$. 
\end{proof}Here, we collect some example of realizable  flat module over non-artinian rings that are not finitely generated:
\begin{example} 
\begin{enumerate}
\item[i)] Let  $(R,\fm)$  be a Gorenstein local ring. Then $\widehat{R}$ is realizable.
\item[ii)] Let  $(R,\fm)$  be a complete Cohen-Macaulay local ring and let $\{M_j\}$ be a family of finitely generated modules. Then $\prod_{j}M_j$ is $i$-realizable for any $0<i\leq \dim R$. 
\item[iii)] Let  $R$ be as item ii). Then $\prod_{\mathbb{N}}R$ is realizable.
\end{enumerate}
\end{example}

\begin{proof}
	Let $d:=\dim R$.\begin{enumerate}
	\item[i)] In view of Proposition \ref{extk} we know that $\widehat{R}\cong\Ext^d_R(\HH^d_{\fm}(R),R)$. By definition, $\widehat{R}$ is realizable.	
	\item[ii)] For simplicity, we assume $i=d$.
	By the above corollary we know that $M_j=\Ext^d_R(\HH^d_{\fm}(R),M_j)$.
	Thus, $\prod_{j}M_j\cong\Ext^d_R(\HH^d_{\fm}(R),\prod_{j}M_j)$, and so  $\prod_{j}M_j$ is realizable. 
	\item[iii)] This is a special case of ii)
\end{enumerate}
\end{proof}

The following result was proved by Jensen \cite[Proposition 5]{j}. 
	Here, we present an elementary proof:

\begin{fact}\label{bf}
	Let $R$ be a complete local ring and $\{M_i\}$ be an inverse system of finitely generated modules. Suppose $\id_R(M_i)\leq n$ for all $i$. Then $\id_R(\vpl M_i) \leq n$.
\end{fact}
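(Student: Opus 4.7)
The plan is to move the statement across Matlis duality, where injective dimension becomes flat dimension and inverse limits become direct limits. Set $M := \vpl M_i$ and write $(-)^v := \Hom_R(-,E_R(k))$. Since $R$ is complete and each $M_i$ is finitely generated, Matlis reflexivity gives natural isomorphisms $M_i \cong \Hom_R(M_i^v, E_R(k))$. Because $\Hom_R(-,E_R(k))$ sends direct limits to inverse limits, one obtains
\[\vpl_i M_i \;=\; \vpl_i \Hom_R(M_i^v, E_R(k)) \;\cong\; \Hom_R\!\left(\vil_i M_i^v,\, E_R(k)\right) \;=\; N^v,\]
where $N := \vil_i M_i^v$ is the induced direct limit of artinian modules.

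Next I would translate the hypothesis $\id_R(M_i)\le n$ into $\fd_R(M_i^v)\le n$. For any finitely generated $X$, choose a resolution by finitely generated free modules $P_\bullet\to X$ and apply the adjunction $\Hom_R(P_\bullet,\Hom_R(M_i^v,E_R(k)))\cong \Hom_R(P_\bullet\otimes_R M_i^v,E_R(k))$ together with the exactness of $(-)^v$; taking cohomology yields
\[\Ext^j_R(X,M_i) \;\cong\; \Tor_j^R(X,M_i^v)^v.\]
Each $\Tor_j^R(X,M_i^v)$ is a subquotient of a finite direct sum of copies of the artinian module $M_i^v$, hence artinian and therefore Matlis reflexive, so dualizing once more gives $\Tor_j^R(X,M_i^v)\cong \Ext^j_R(X,M_i)^v$. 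The hypothesis $\id_R(M_i)\le n$ then forces $\Tor_{n+1}^R(X,M_i^v)=0$ for every finitely generated $X$, i.e.\ $\fd_R(M_i^v)\le n$. Since $\Tor$ commutes with direct limits, $\fd_R(N) = \fd_R(\vil_i M_i^v)\le n$.

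For the last step, the same adjunction applied to an arbitrary $X$ and to the module $N$ (without any finiteness) gives $\Ext^j_R(X,N^v)\cong \Tor_j^R(X,N)^v$, using only exactness of $(-)^v$ and hom–tensor adjunction. Thus $\Ext^{n+1}_R(X,N^v)=0$ for all $X$, whence $\id_R(N^v)\le n$; combined with $M=N^v$, this yields $\id_R(M)\le n$. The subtlest point is the very first identification $\vpl M_i = N^v$, which genuinely uses completeness of $R$ through Matlis double duality together with the commutation of $\Hom_R(-,E_R(k))$ with direct limits; a secondary nuisance is keeping track of Matlis reflexivity when going back and forth between $\Ext^{\ast}_R(X,M_i)$ and $\Tor_{\ast}^R(X,M_i^v)$, but that is handled uniformly by the artinianness of the $\Tor$-modules involved.
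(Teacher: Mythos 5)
Your proof is correct and takes essentially the same route as the paper: pass through Matlis duality to turn the inverse limit into a direct limit of artinian duals, use that Tor commutes with direct limits to bound the flat dimension of that direct limit, and dualize back using completeness to recover $\vpl M_i$. The paper phrases the same argument slightly more economically by directly dualizing injective resolutions into flat resolutions and back, rather than passing through $\Ext$/$\Tor$ isomorphisms, but the mechanism is identical.
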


\begin{proof}
	Dualizing the injective resolution   $0\to M_i\to E_0\to\cdots\to E_n\to0$,  yields the following flat resolution
	$$0\lo E_n^v\lo\cdots\lo E_0^v\lo M_i^v\lo 0.$$ Since
	$\Tor$ commutes with directed limit, it follows that $\fd_R(\vil M_i^v) \leq n$.
	Let $$0\lo F_n\lo\cdots\lo F_0\lo \vil (M_i^v)\lo 0$$ be a flat resolution. Then
	$$0\lo(\vil M_i^v)^v\lo F_0^v\lo \cdots\lo  F_n^v\lo 0$$ is an injective resolution.
	Recall that $$(\vil M_i^v)^v=\vpl M_i^{vv}=\vpl M_i.$$ This completes the proof.
\end{proof}

In fact, one may use Fact \ref{bf} and show that:

\begin{fact}
	Let $R$ be a complete local ring and $\{ \mathcal{A}_i\}$ be an inverse system of Matlis reflexive  modules  (e.g. artinian modules). Suppose $\id_R( \mathcal{A}_i)\leq n$ for all $i$. Then $\id_R(\vpl  \mathcal{A}_i) \leq n$.
\end{fact}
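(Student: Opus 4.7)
The plan is to adapt the argument of the preceding fact verbatim, the only substantive change being that the identification $(-)^{vv}\cong(-)$ must now come from the Matlis-reflexivity hypothesis rather than from finite generation over a complete local ring. The proof is a duality sandwich: injective dimension on one side $\longleftrightarrow$ flat dimension on the other, with the direct-limit/inverse-limit interchange sitting in the middle.

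First, for each $i$ fix an injective resolution $0\to\mathcal{A}_i\to E_0^{(i)}\to\cdots\to E_n^{(i)}\to 0$ of length $\le n$. Applying the Matlis functor $(-)^{v}$ yields a complex
\[
0\to (E_n^{(i)})^v\to\cdots\to (E_0^{(i)})^v\to \mathcal{A}_i^v\to 0,
\]
which is exact (since $(-)^v$ is exact, as $E_R(k)$ is injective) and consists of flat modules in positive degrees (since the Matlis dual of an injective over a Noetherian ring is flat). Thus $\fd_R(\mathcal{A}_i^v)\le n$ for every $i$.

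Next, the inverse system $\{\mathcal{A}_i\}$ gives a directed system $\{\mathcal{A}_i^v\}$. Because $\Tor$ commutes with directed limits, $\fd_R(\vil \mathcal{A}_i^v)\le n$. Choose a flat resolution $0\to F_n\to\cdots\to F_0\to \vil \mathcal{A}_i^v\to 0$. Dualizing again produces an exact sequence
\[
0\to (\vil \mathcal{A}_i^v)^v\to F_0^v\to\cdots\to F_n^v\to 0,
\]
in which each $F_j^v$ is injective (the Matlis dual of a flat module over a Noetherian ring is injective). To conclude we use the standard adjunction $(\vil X_j)^v\cong\vpl (X_j^v)$, so that
\[
(\vil \mathcal{A}_i^v)^v\cong \vpl \mathcal{A}_i^{vv}\cong \vpl \mathcal{A}_i,
\]
where the last isomorphism is exactly the Matlis-reflexivity assumption, applied term by term. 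Hence $\id_R(\vpl \mathcal{A}_i)\le n$.

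The only nontrivial step is the last identification $\mathcal{A}_i^{vv}\cong\mathcal{A}_i$; this is where the Matlis-reflexivity hypothesis is essential and is precisely the point at which the argument generalizes the preceding fact about finitely generated modules (which are automatically Matlis reflexive over a complete local ring). Since the parenthetical example mentions artinian modules, one should also note that artinian modules over a complete local ring are Matlis reflexive, so the artinian case is a direct consequence.
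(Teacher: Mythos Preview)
Your proof is correct and is precisely the argument the paper intends: the statement is introduced with ``In fact:'' immediately after the finitely generated version, and the paper gives no separate proof, signalling that one should rerun the same duality sandwich with $\mathcal{A}_i^{vv}\cong\mathcal{A}_i$ now supplied by the Matlis-reflexivity hypothesis rather than by finite generation over a complete ring. Your write-up matches this step for step.
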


\section{Homologically zero flat modules}

We recall the following elementary result of Auslander:

\begin{fact}(See \cite[Proposition 4.3]{comment})
Let $(R,\fm)$ be a complete local, $M$ be finitely generated. Then $\Ext^p_R(\vil \mathcal{A}_i , M)\cong\vpl\Ext^p_R(\mathcal{A}_i,M)$ for any directed family $\{\mathcal{A}_i\}$.
\end{fact}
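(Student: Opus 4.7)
The idea is to convert the $\Ext$ whose first input is a colimit into a $\Tor$ (which does commute with directed colimits) via Matlis duality, perform the limit interchange in the $\Tor$ world, and then convert back. Since $R$ is complete local and $M$ is finitely generated, Matlis reflexivity gives $M\cong M^{vv}=\Hom_R(M^v,E_R(k))$. Combining this identification with Hom--tensor adjunction yields a natural isomorphism $\Hom_R(N,M)\cong\Hom_R(N\otimes_R M^v,E_R(k))$ for every $R$-module $N$.

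I would next lift this to the level of derived functors. Choose a projective resolution $P_\bullet\to N$; the adjunction above gives an isomorphism of cochain complexes $\Hom_R(P_\bullet,M)\cong\Hom_R(P_\bullet\otimes_R M^v,E_R(k))$. Because $E_R(k)$ is injective the contravariant functor $(-)^v$ is exact and therefore commutes with the formation of (co)homology; applied to the chain complex $P_\bullet\otimes_R M^v$ whose homology computes $\Tor^R_\ast(N,M^v)$, this produces the key natural isomorphism
\[
\Ext^p_R(N,M)\cong\Tor^R_p(N,M^v)^v\qquad(p\geq 0).
\]

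It remains to specialize $N:=\vil_i\mathcal{A}_i$ and combine two standard facts: $\Tor$ commutes with directed colimits in either argument, and $(-)^v=\Hom_R(-,E_R(k))$ sends direct limits to inverse limits. The resulting chain of natural isomorphisms
\begin{align*}
\Ext^p_R(\vil \mathcal{A}_i,M)
&\cong\Tor^R_p(\vil \mathcal{A}_i,M^v)^v
\cong\bigl(\vil\Tor^R_p(\mathcal{A}_i,M^v)\bigr)^v \\
&\cong\vpl\bigl(\Tor^R_p(\mathcal{A}_i,M^v)^v\bigr)
\cong\vpl\Ext^p_R(\mathcal{A}_i,M)
\end{align*}
then completes the argument. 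The delicate step is the passage from the unadorned Hom--tensor adjunction to its derived version, since this uses injectivity of $E_R(k)$ to commute the Matlis functor past the cohomology of the complex computing $\Tor$; this is the only part of the proof that requires careful bookkeeping, while everything else reduces to the commutation of $\Tor$ with colimits and of $\Hom(-,E_R(k))$ with colimit/limit.
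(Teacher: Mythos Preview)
Your argument is correct. The paper does not actually supply a proof of this statement; it is recorded as a \emph{Fact} with a citation to Auslander's original paper \cite[Proposition 4.3]{comment}, and is then immediately invoked to deduce Corollary~\ref{f}. So there is no in-paper proof to compare against.

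That said, your Matlis-duality route is very much in the spirit of the surrounding arguments. The identity $\Ext^p_R(N,M)\cong\Tor^R_p(N,M^v)^v$, valid because $M\cong M^{vv}$ over a complete local ring, is exactly the kind of Ext--Tor interchange the paper exploits elsewhere (compare Lemma~\ref{dmat} and Proposition~\ref{j3}). One small remark: your chain of isomorphisms is natural in the $\mathcal{A}_i$, which is what makes the final identification $\vpl\bigl(\Tor^R_p(\mathcal{A}_i,M^v)^v\bigr)\cong\vpl\Ext^p_R(\mathcal{A}_i,M)$ legitimate as an isomorphism of inverse systems, not merely a termwise bijection; you should say this explicitly. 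Otherwise the bookkeeping is fine.
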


We simplify  the following result of Jensen and Buchweitz-Flenner: 

\begin{corollary}(See \cite[Corollary 1]{bf})\label{f}
 If $(R,\fm)$ is a complete local noetherian ring and $\mathcal{F}$ is a flat $R$-module then
	$\Ext^p_R(\mathcal{F}, M) = 0$ for all $p \geq 1$ and all finite $R$-modules $M$.
\end{corollary}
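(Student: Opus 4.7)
The plan is to combine Lazard's theorem with Auslander's natural transformation (the Fact stated immediately before this corollary) to reduce the computation to the trivial case of finitely generated free modules. This is precisely what the introductory paragraph to \S8 is hinting at.

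First, I would invoke Lazard's theorem to write the flat $R$-module $\mathcal{F}$ as a directed limit $\mathcal{F} \cong \vil_i F_i$, where each $F_i$ is a finitely generated free $R$-module. Since $(R,\fm)$ is complete local and $M$ is finitely generated, the Auslander fact quoted just above Corollary \ref{f} applies, giving the natural isomorphism
$$
\Ext^p_R(\mathcal{F},M) \;=\; \Ext^p_R\bigl(\vil_i F_i,\, M\bigr) \;\cong\; \vpl_i \Ext^p_R(F_i,M).
$$

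Because each $F_i$ is finitely generated free, we have $\Ext^p_R(F_i,M)=0$ for every $p\geq 1$. Hence the inverse system on the right side is identically zero, and its limit vanishes. This gives $\Ext^p_R(\mathcal{F},M)=0$ for all $p\geq 1$, as desired.

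There is essentially no real obstacle: the proof only requires that the cited Auslander fact is applicable to a directed system obtained from Lazard's theorem, and that $M$ is finitely generated (both hypotheses are given). The whole simplification over \cite{bf} comes from replacing their more technical handling of flat modules with the clean ``pull the direct limit out of $\Ext$ into an inverse limit'' formula, which is valid precisely because we work over a complete local ring with finitely generated second argument.
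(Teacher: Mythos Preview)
Your proof is correct and is essentially identical to the paper's own argument: both invoke Lazard's theorem to write $\mathcal{F}=\vil_i F_i$ with $F_i$ finitely generated free, apply Auslander's fact to pull the direct limit through $\Ext$ as an inverse limit, and conclude from $\Ext^p_R(F_i,M)=0$ for $p\geq 1$.
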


\begin{proof}Due to Lazard's theorem we know that any flat module $\mathcal{F}$ is a direct limit of free modules $\{F_i\}$.
By the above fact,  $$\Ext^p_R(\mathcal{F} , M)=\Ext^p_R(\vil F_i , M)\cong\vpl\Ext^p_R(F_i,M)=0,$$as claimed. 
\end{proof}
As for as I know, the history of the next result comes back to the beautiful  red book of Kaplansky \cite[theorem 19]{kap54}:

\begin{corollary}(Compare with \cite[Theorem 7.1]{n})\label{n1}
	Let $(R,\fm)$ be a $PID$. The following are equivalent:
	\begin{enumerate}	
		\item[a)] $\Ext^1_R(\mathcal{F},N)=0$   for  any torsion-free  module $\mathcal{F}$ and any finitely generated $R$-module $N$,
		\item[b)] $\Ext^1_R(\mathcal{F},N)$ is  finitely generated   for  any torsion-free  module $\mathcal{F}$ and any finitely generated $R$-module $N$,
		\item[c)]	$\Ext^1_R(\mathcal{F},R)$ is  finitely generated   for  any torsion-free  module $\mathcal{F}$,
		\item[d)]	$\Ext^1_R(\mathcal{F},R)=0$  for  any countably generated torsion-free  module $\mathcal{F}$,
		\item[e)]	$\Ext^1_R(\mathcal{F},R)=0$  for  any torsion-free  module $\mathcal{F}$,	\item[f)]  	$R$ is complete,
		\item[g)]  	$R$ is realizable,
		\item[h)]  	 any finitely generated module is realizable.
	\end{enumerate}
\end{corollary}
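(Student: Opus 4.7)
The plan is to establish all eight equivalences via a cycle. The purely formal implications are $a) \Rightarrow b) \Rightarrow c)$ (weakening), $a) \Rightarrow e) \Rightarrow d)$ (specializing $N=R$ and restricting to countable type), and $h) \Rightarrow g)$ (take the finitely generated module to be $R$ itself). What must be supplied are $c) \Rightarrow f)$, $d) \Rightarrow f)$, $f) \Rightarrow a)$, $f) \Rightarrow h)$, and $g) \Rightarrow f)$; the last of these will be the main obstacle.

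I would handle $c) \Rightarrow f)$ and $d) \Rightarrow f)$ uniformly by feeding $\mathcal{F} = \mathcal{Q}(R)$ into the hypothesis. Since $(R,\fm)$ is a local PID of positive dimension it is a DVR, so $\mathcal{Q}(R) = R[\pi^{-1}]$ is countably generated by $\{\pi^{-n}\}_{n\ge 0}$; moreover $\Hom_R(\mathcal{Q}(R), R) = 0$ because $\mathcal{Q}(R)$ is divisible while $R$ admits no nonzero divisible submodule, and $\Ext^{\ge 2}_R(-, R) = 0$ since $\gd R = 1$. Hypothesis $c)$ (or $d)$, applied to the countably generated $\mathcal{Q}(R)$) then makes $\mathcal{Q}(R)$ cohomologically finite with respect to $R$, whereupon the earlier corollary of Section 6 stating that $\mathcal{Q}(R)$ is cohomologically finite iff $R$ is complete yields $f)$.

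For $f) \Rightarrow a)$ I would invoke Corollary \ref{f}: on a complete local noetherian ring every flat module has vanishing higher $\Ext$ against any finitely generated module, and over a PID torsion-free equals flat. For $f) \Rightarrow h)$ I would invoke Corollary \ref{cr}: when $R$ is complete, any finitely generated $M$ is its own completion, so the isomorphism $M = \widehat{M} \cong \Ext^1_R(E_R(k), M)$ exhibits $M$ as realizable at level $\dim R = 1$.

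The delicate link is $g) \Rightarrow f)$. Here I would appeal to the classical fact of Matlis cotorsion theory that over a Dedekind domain every module of the form $\Ext^1_R(\mathcal{N}, \mathcal{L})$ is cotorsion, i.e., $\Ext^1_R(\mathcal{Q}(R), \Ext^1_R(\mathcal{N}, \mathcal{L})) = 0$. Hence if $R \cong \Ext^1_R(\mathcal{N}, \mathcal{L})$ is realizable, then $R$ itself is cotorsion; combined with the vanishing of $\Hom_R(\mathcal{Q}(R), R)$ this makes $\mathcal{Q}(R)$ cohomologically finite with respect to $R$, and the same characterization used above forces $R$ to be complete. The main obstacle is this reliance on the external cotorsion input; once it is granted, the rest of the cycle uses only the tools developed in this paper.
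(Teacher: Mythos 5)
Your proposal is correct, and it follows a genuinely different graph of implications than the paper, which closes the cycle $a)\Rightarrow b)\Rightarrow\cdots\Rightarrow h)\Rightarrow a)$ in order. You instead treat $f)$ as a hub. For $c)\Rightarrow f)$ and $d)\Rightarrow f)$ the paper routes through Lemma \ref{lbr} and then invokes Jensen's theorem \cite[Theorem 1]{j2} at the step $e)\Rightarrow f)$; you bypass Jensen entirely by specializing the hypothesis to the single countably generated torsion-free module $\mathcal{Q}(R)$ and then invoking the paper's own Section~6 corollary that $\mathcal{Q}(R)$ cohomologically finite forces $R$ complete (itself resting on Proposition \ref{br} and Nunke's Theorem 8.5). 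That is a cleaner use of the machinery already built in the paper. The one place you import an external fact is $g)\Rightarrow f)$, where you cite that $\Ext^1_R(\mathcal{N},\mathcal{L})$ is always cotorsion over a Dedekind domain; note that the paper does not escape this either --- its step ``$g)\Rightarrow h)$: it follows that any finitely generated module is complete'' tacitly presupposes $g)\Rightarrow f)$, which is precisely the content of Nunke's Theorem 7.1 that the statement itself asks the reader to ``compare with.'' So your treatment is if anything more explicit about where the nontrivial input lives; a one-line citation of Nunke there would make it fully self-contained. The remaining spokes $f)\Rightarrow a)$ (Corollary \ref{f}) and $f)\Rightarrow h)$ (Corollary \ref{cr}, using that finitely generated modules over a complete local ring equal their completions) agree with the paper's $h)\Rightarrow a)$ and $f)\Rightarrow g)$ in substance.
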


\begin{proof}
	$a)\Rightarrow b)\Rightarrow c)$:  These are trivial.
	$c)\Rightarrow d)$: This is in Lemma \ref{lbr}.
	$d)\Rightarrow e):$ This is trivial. $e)\Rightarrow f):$
	Recall that   flat and  torsion-free are the same notions. Now, use \cite[Theorem 1]{j2}. $f)\Rightarrow g):$ See Corollary \ref{cr}.
	$g)\Rightarrow h):$ It follows that any finitely generated module is complete. Now, see Corollary \ref{cr}. $h)\Rightarrow a):$ This implies that $R$ is complete, and recall that torsion-free are the same notions. Now, use Corollary \ref{f}.
\end{proof}

Here, we collect   some examples of cohomogically (non-) zero flat modules from literature.

\begin{example}(Gruson)
	Let $k$ be an uncountable field, and  let $R:=k[X,Y]_{(X,Y)}$. 
Then $\Ext^i_R(\mathcal{Q}(R),R)=0$ iff $i\neq 2$.	In particular, $\Ext^2_R(\mathcal{Q}(R),R)$ is not finitely generated.
\end{example}

\begin{proof} 
	Suppose on the way of contradiction that  $\Ext^2_R(\mathcal{Q}(R),R)$ is  finitely generated.
	By a result of Gruson \cite[Proposition 3.2]{Gru} we know that $R$ is complete with respect to adic topology of the collection $S:=\{rR:r\in R\}$, i.e., the map $f$ in the following exact sequence
	$$0\lo R\stackrel{f}\lo \vpl_{r\in S} \frac{R}{rR}\lo \Ext^1_R(\mathcal{Q}(R),R) \lo 0$$ is an isomorphism. In other words, $\Ext^1_R(\mathcal{Q}(R),R)=0$. Also, $\Hom_R(\mathcal{Q}(R),R)=0$. In other words, $\mathcal{Q}(R)$ is cohomologically finite.
	In view of	Lemma \ref{lbr} we see $\Ext^2_R(\mathcal{Q}(R),R)=0$. Since $\id_R(R)=2$, we have $\Ext^i_R(\mathcal{Q}(R),R)=0$ for all $i$. According to \cite[Theorem 9.19]{j3}, this yields $\mathcal{Q}(R)=0$.
	This is a contradiction.
\end{proof}

The following is now immediate:
\begin{corollary}
	(Kaplansky) Adopt the previous notation. Then $\pd_R(\mathcal{Q}(R))=2$.
\end{corollary}
The uncountable assumption on $k$ is really needed, as the next example says:

\begin{example} 
	Let $(R,\fm)$ be a countable Gorenstein integral domain. Then $\Ext^i_R(\mathcal{Q}(R),R)=0$ iff $i\neq 1$.
\end{example}

\begin{proof} 
	Recall that $\pd_R(\mathcal{Q}(R))=1$ and $\Hom_R(\mathcal{Q}(R),R)=0$. It remains to
	recall from \cite[Theorem 9.18]{j3} that over a  countable  local Gorenstein ring, a   flat  module is zero iff it is cohomologically zero.
\end{proof}
\begin{example}\label{2com}
	Let $k$ be any field, and  let $R:=k[[X,Y]]$. 
	Then $\pd_R(\mathcal{Q}(R))=2$.
\end{example}

\begin{proof}
It follows by countable prime avoidance for complete rings (see \cite{sv}) that there are uncountable family of height one prime ideals which are principal, as $R$ is $UFD$. It remains to apply the proof of \cite[Theorem 2]{kapc}.
\end{proof}
Let us compute
$\pd_R(\mathcal{Q}(R))$ in some singular cases. First, we present the following easy observation:
\begin{discussion}
	\label{surjective}
	Let $(R,\fm,k)$ be a $2$-dimensional   Cohen-Macaulay local integral domain containing the uncountable field $k$.   Let $x,y$ be an $R$-sequence.	According to a result of Hartshorne \cite[Proposition 1]{H66}, $A:=k[x,y]$ is the polynomial ring and the inclusion map  $A\hookrightarrow R$ is a flat extension. In the light of \cite[Theorem 2]{kapc} we observe that
	$\pd_A(\mathcal{Q}(A))= 2$. 
	Let $N$ be any $R$-module. So, it is equipped with the structure of an $A$-module. By definition, $\Ext_A^j(\mathcal{Q}(A),N)=0$ for all $j>2$.
	Following Cartan-Eilenberg (see \cite[VI.4.1.3]{ce}) we have $$0=\Ext^j_A(\mathcal{Q}(A), N)\otimes_AR\cong\Ext^j_{{R}}(\mathcal{Q}(A)\otimes_AR,N)\cong\Ext^j_{{R}}(\mathcal{Q}(A)\otimes_AR, N)$$for all $j>2$. We deduce from this that $$\pd_R(\mathcal{Q}(A)\otimes_AR)\leq 2.$$\end{discussion}

\begin{example}\label{89}
	Let $R:=k[x^2,xy,x^2]_{\fm}$.
	Then $\pd_R(\mathcal{Q}(R))\leq 2.$
\end{example}

\begin{proof}
	Let $A:=k[x^2,y^2]$. In view of 
	Discussion \ref{surjective}
	$$\pd_R(\mathcal{Q}(A)\otimes_AR)\leq 2\quad(\ast)$$
	For simplicity, we set $U:=x^2$, $V:=y^2$ and $W:=xy$. Then $A:=k[U,V]$ and $R:=\frac{k[U,V,W]}{(UV-W^2)}$. Let us rewrite
	$R:=A[W]/(W^2-a)$ where $a:=UV\in A$. It is easy to see that $$\mathcal{Q}(R)=\mathcal{Q}(A)[W]/(W^2-a),$$
	because $w(wa^{-1})=w^2a^{-1}=1$, i.e., the inverse of $w$ is $wa^{-1}\in \mathcal{Q}(A)[W]/(W^2-a)$. Now, recall that
	\begin{equation*}
	\begin{array}{clcr}
	\mathcal{Q}(R) &=\mathcal{Q}(A)[W]/(W^2-a)\\
	&
	= \mathcal{Q}(A)\otimes_A(A[W]/(W^2-a))\\
	&= \mathcal{Q}(A)\otimes_AR.
	\end{array}
	\end{equation*}
	In the light of $(\ast)$,
	$\pd_R(\mathcal{Q}(R))\leq 2.$ 
\end{proof}

\begin{example}\label{891}
  Let $R:	= \mathbb{C}[ x,y,z ]_{\fm}/(x^2 + y^ 3 + z^5 )$. Then $\pd_R(\mathcal{Q}(R))=2$.
\end{example}

\begin{proof}	
The method of Example \ref{89} shows that
	$\pd_R(\mathcal{Q}(R))\leq 2.$ Suppose
	on the way of contradiction that $\pd_R(\mathcal{Q}(R))\neq2$. Then
	$\pd_R(\mathcal{Q}(R))=1$. Following
	\cite[Theorem 1]{kapc} $\mathcal{Q}(R)$
	is countably generated as an $R$-module.
	Recall that a noetherian local ring with uncountable residue field equipped with  countable prime avoidance.
It follows by this that there are uncountable family of height one prime ideals. Recall that the singular ring $R$ is $UFD$. The ring is equipped with uncountable family of  irreducible elements. So, $\mathcal{Q}(R)$ is not countably generated. This contradiction shows that $\pd_R(\mathcal{Q}(R))=2$.
\end{proof} 

\begin{remark}
i)	In the final section we present a modern proof of  Discussion \ref{surjective} and Example \ref{89} via applying the machinery invented by Gruson and Ranaud \cite{GR}.
Despite this, the above arguments are elementary, independent from \cite{GR} and spiritual.

ii)
 In Theorem \ref{113} we present the general case of Example \ref{891}. In fact, the mentioned theorem inspired by Example \ref{891}.
\end{remark}

\begin{example}\label{schex1}
	Let $(R,\fm)$ be  complete  and $f\in\fm$ be regular. Then  $R_{f}$ is homologically zero flat module.
\end{example}

\begin{proof} 
	It is easy to see $\Hom_R(R_f,R)=0$. By Corollary \ref{f} $\Ext^+_R(  R_f,R)=0$.
\end{proof}
We need the reverse part of Example \ref{schex1}:

\begin{fact}\label{schpr}(See \cite[Theorem 1.1]{sch}) 
	Let $x_1,\ldots, x_d$ be a full system of parameters for $(R,\fm)$. Then $\Ext^1_R(\oplus R_{x_i},R)=0$ iff $R$ is complete in $\fm$-adic topology.
\end{fact}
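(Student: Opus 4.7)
The plan is to compute each $\Ext^1_R(R_{x_i}, R)$ as a quotient of the $(x_i)$-adic completion of $R$, convert the direct sum into a product, and then reduce completeness in each $(x_i)$-adic topology to $\fm$-adic completeness.

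For each $i$, I would apply $\Hom_R(-,R)$ to the short exact sequence $0 \to R \to R_{x_i} \to R_{x_i}/R \to 0$; injectivity of the first map follows from $\bigcap_n x_i^n R = 0$ by Krull's intersection theorem (assuming $x_i$ is a nonzerodivisor, which can be arranged or patched by first quotienting by $x_i$-torsion). Both $\Hom_R(R_{x_i},R)$ and $\Hom_R(R_{x_i}/R,R)$ vanish: the former by Krull, the latter because $R_{x_i}/R$ is $x_i$-power-torsion while $R$ is $x_i$-torsion-free. Writing $R_{x_i}/R \cong \vil_n R/x_i^n R$ with multiplication-by-$x_i$ transition maps, the Milnor $\vpl^1$-sequence, whose $\vpl^1$-term vanishes because $\Hom_R(R/x_i^n R, R) = 0$, yields $\Ext^1_R(R_{x_i}/R, R) \cong \vpl_n \Ext^1_R(R/x_i^n R, R) \cong \vpl_n R/x_i^n R = \widehat{R}^{(x_i)}$, the $(x_i)$-adic completion. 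The long exact sequence then collapses to $0 \to R \to \widehat{R}^{(x_i)} \to \Ext^1_R(R_{x_i}, R) \to 0$, identifying $\Ext^1_R(R_{x_i}, R) \cong \widehat{R}^{(x_i)}/R$. Taking products over $i$, $\Ext^1_R(\bigoplus_i R_{x_i}, R) \cong \prod_i \widehat{R}^{(x_i)}/R$, which vanishes iff $R$ is $(x_i)$-adically complete for every $i$.

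Next I would prove the equivalence: $R$ is $(x_i)$-adically complete for every $i$ iff $R$ is $\fm$-adically complete. The direction ``$\fm$-complete $\Rightarrow$ $(x_i)$-complete'' is classical (a complete Noetherian local ring is $I$-adically complete for every $I \subseteq \fm$). For the converse, I would induct on $d$: for $d = 1$ the conclusion is immediate since $\fm = \sqrt{(x_1)}$, so the $(x_1)$-adic and $\fm$-adic topologies coincide. For $d \geq 2$, reduce modulo $x_d$: since completion of finitely generated modules commutes with quotients by regular elements, $R/x_d R$ inherits $(\bar x_i)$-completeness for $i < d$ and carries the system of parameters $\bar x_1, \ldots, \bar x_{d-1}$. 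By induction, $R/x_d R$ is $\fm/(x_d)$-complete; combining this with $(x_d)$-completeness of $R$ via a Cauchy-sequence lifting argument then gives $\fm$-completeness of $R$.

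The main obstacle will be this final lifting step, namely synthesizing coordinate-wise $(x_i)$-completenesses into joint $\fm$-adic completeness. The subtlety is that $(x_1, \ldots, x_d)^n$ is not contained in any power of an individual $(x_i)$, so an $\fm$-Cauchy sequence need not be $(x_i)$-Cauchy for any single $i$; hence the lifting argument must carefully glue compatible lifts across the $x_d$-reduction, exploiting the interplay between $(x_d)$-completeness of $R$ and $\fm/(x_d)$-completeness of $R/x_d R$ (or equivalently, invoking a general completion lemma of Bourbaki--Yekutieli type on completions with respect to finitely generated ideals).
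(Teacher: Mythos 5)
The paper offers no proof of this Fact; it is cited directly from Schenzel \cite[Theorem 1.1]{sch}, so there is no in-paper argument to compare against and your reconstruction must stand on its own. Your broad plan --- compute each $\Ext^1_R(R_{x_i},R)$ as a quotient $\widehat{R}^{(x_i)}/R$ of the $(x_i)$-adic completion, convert the direct sum to a product, and then show that $(x_i)$-adic completeness for every $i$ is equivalent to $\fm$-adic completeness --- is a sensible route and is close in spirit to the literature.

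That said, there are two genuine gaps, and you have identified the second but not resolved it. First, the nonzerodivisor assumption is not cosmetic: in a general Noetherian local ring a parameter can be a zerodivisor (e.g.\ $y$ in $k[[x,y]]/(x^2,xy)$), and then $R\to R_{x_i}$ is not injective, $\Hom_R(R/x_i^nR,R)\neq 0$, and the $\vpl^1$-term in your Milnor sequence no longer obviously vanishes; ``quotienting by $x_i$-torsion'' changes the ring and the $\Ext$ group, and it is not explained how to transfer the conclusion back. Second --- the step you flag as ``the main obstacle'' --- the synthesis from $(x_d)$-completeness of $R$ plus $\fm/(x_d)$-completeness of $R/x_dR$ to $\fm$-completeness of $R$ is precisely the heart of the matter and is left as a ``lifting argument'' or an appeal to an unnamed Bourbaki--Yekutieli lemma. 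It does go through, but must be proved: applying $\fm$-adic completion (exact on finitely generated modules, by Artin--Rees) to $0\to R\xrightarrow{x_d}R\to R/x_dR\to 0$ and using injectivity of $R\to\widehat{R}$ (Krull) together with the hypothesis that $R/x_dR\to\widehat{R/x_dR}$ is an isomorphism, one finds that $x_d$ acts bijectively on $C:=\widehat{R}/R$, whence $\widehat{R}=R+x_d^n\widehat{R}$ for all $n$; given $\hat r\in\widehat{R}$ this produces an $(x_d)$-Cauchy sequence $r_n\in R$ with $\hat r-r_n\in x_d^n\widehat{R}$, which by $(x_d)$-completeness of $R$ converges to some $r\in R$, and finally $\hat r-r\in\bigcap_n x_d^n\widehat{R}=0$ by Krull in the Noetherian local ring $\widehat{R}$. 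As written, your proposal names the right lemma but does not establish it, so the proof is incomplete.
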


\section{Splitting with the theme of Kaplansky}
We  start with the following splitting criteria:
 
\begin{proposition}\label{splitmat}
	Let $(R,\fm)$ be a local domain and $f\in R$ be nonzero.
	Let $\zeta:=0\to R\to \mathcal{A} \stackrel{g} \lo  R_f\to 0$ be such that $\mathcal{A}$  decomposes into nonzero modules.
	Then $\zeta$ splits.
\end{proposition}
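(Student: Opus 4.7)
The plan is to exploit the direct sum decomposition $\mathcal{A}=\mathcal{A}_1\oplus\mathcal{A}_2$ (both summands nonzero) to reduce to a canonical form $\mathcal{A}\cong R\oplus R_f$. First I would note that $\mathcal{A}$ is torsion-free of rank $2$ (sandwiched between the rank-$1$ torsion-free modules $R$ and $R_f$), so each $\mathcal{A}_i$ is torsion-free of rank $1$. Writing $1=a_1+a_2$ with $a_i\in\mathcal{A}_i$, I would distinguish two cases.

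If $a_1=0$ (or by symmetry $a_2=0$), then $R\subseteq\mathcal{A}_2$. A short diagram chase using $\mathcal{A}_1\cap\mathcal{A}_2=0$ gives $g(\mathcal{A}_1)\cap g(\mathcal{A}_2)=0$, so combined with surjectivity $R_f=g(\mathcal{A}_1)\oplus g(\mathcal{A}_2)$ as an internal direct sum. But $R_f$ is indecomposable: as a rank-$1$ torsion-free module over a domain it embeds in $\mathcal{Q}(R)$, and any two nonzero elements of $\mathcal{Q}(R)$ admit a common $R$-multiple, ruling out nontrivial decompositions. Since $\mathcal{A}_1\neq 0$, one forces $g(\mathcal{A}_2)=0$, whence $\mathcal{A}_2\subseteq R\subseteq\mathcal{A}_2$, i.e.\ $\mathcal{A}_2=R$; then $\mathcal{A}=\mathcal{A}_1\oplus R$ visibly splits $\zeta$.

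If both $a_i\neq 0$, torsion-freeness of the $\mathcal{A}_i$ forces $R\cap\mathcal{A}_i=0$, so each $g_i:=g|_{\mathcal{A}_i}$ is injective. Identifying $\mathcal{A}_i$ with $B_i:=g_i(\mathcal{A}_i)\subseteq R_f$, the sequence $\zeta$ takes the Mayer--Vietoris form
\[
0\to Rz_0\to B_1\oplus B_2\xrightarrow{+}R_f\to 0,\qquad z_0:=g(a_1),
\]
with $B_1\cap B_2=Rz_0$ and $B_1+B_2=R_f$. Quotienting by $Rz_0$ yields a direct sum decomposition $R_f/Rz_0=(B_1/Rz_0)\oplus(B_2/Rz_0)$. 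The crucial claim is that $R_f/Rz_0$ is indecomposable as an $R$-module. Granted this, one summand vanishes, say $B_1=Rz_0$, so $B_1\subseteq B_2$ and $B_2=B_1+B_2=R_f$; this gives $\mathcal{A}_1\cong R$, $\mathcal{A}_2\cong R_f$, and $\mathcal{A}\cong R\oplus R_f$, after which a change of basis in this direct sum identifies $\ker g$ with one of the summands and splits $\zeta$.

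The main obstacle is the indecomposability of $R_f/Rz_0$. After multiplying $z_0$ by a power of $f$ one may reduce to $z_0\in R$; the $f$-power torsion submodule is then (via multiplication by $z_0$) isomorphic to $R_f/R$, which is indecomposable because its $f$-torsion is the cyclic $R$-module $R/fR$, admitting no nontrivial idempotent decompositions over the local ring $R$ (any decomposition of $R_f/R$ would restrict to one of the $f$-torsion). The delicate point is extending this to all of $R_f/Rz_0$, which may contain elements not killed by any power of $f$ when $z_0\in\mathfrak m\setminus fR$; handling these requires a more careful analysis of $\End_R(R_f/Rz_0)$ and uses the locality of $R$ in an essential way.
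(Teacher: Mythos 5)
Your approach is genuinely different from the paper's. Your Case 1 argument and the Mayer--Vietoris reduction in Case 2 (identifying the $\mathcal{A}_i$ with submodules $B_i\subseteq R_f$ with $B_1\cap B_2=Rz_0$ and $B_1+B_2=R_f$) are correct. The section produced at the end, once one of the $B_i$ equals $Rz_0$, is simply $(g|_{\mathcal{A}_2})^{-1}\colon R_f\to\mathcal{A}_2\hookrightarrow\mathcal{A}$, which is fine. But the whole argument hinges on the indecomposability of $R_f/Rz_0$ for an \emph{arbitrary} nonzero $z_0\in R$, and that is a genuine gap which you acknowledge and which is harder than it may look. The $f$-torsion submodule of $R_f/Rz_0$ is always isomorphic to $R_f/R$ (via multiplication by $z_0$), and that piece is indecomposable --- this is where the locality of $R$ really enters, since $\End_R(R_f/R)\cong\widehat{R}_{(f)}$ is a local ring. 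However the $f$-torsion-free quotient $(R_f/Rz_0)/(R_f z_0/Rz_0)\cong (R/z_0R)_f$ can be decomposable: take $R=k[x,y]_{(x,y)}$, $f=x$, $z_0=y(y-x)$; since $(y)$ and $(y-x)$ become comaximal after inverting $x$, CRT gives $(R/z_0R)_f\cong R_f/(y)R_f\times R_f/(y-x)R_f$, which carries a nontrivial idempotent. Thus indecomposability of $R_f/Rz_0$ cannot be read off from the locality of $R$ alone; one would have to argue that this idempotent does not lift, i.e.\ that the extension of $(R/z_0R)_f$ by $R_f/R$ does not split along either factor --- precisely the delicate point you flagged but did not resolve, and it is far from clear how to do it (or whether it is always true for $z_0$ not of the form unit times power of $f$).

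The paper avoids this obstacle entirely by never forming $R_f/Rz_0$. Instead it tensors $\zeta$ with the fixed module $\HH^1_{(f)}(R)=R_f/R$, obtaining $R_f/R\otimes_R\mathcal{A}\cong R_f/R$ regardless of what $z_0$ is; indecomposability is then a one-line consequence of $\End_R(R_f/R)\cong\widehat{R}_{(f)}$ having no nontrivial idempotents. From $R_f/R\otimes_R\mathcal{A}_1=0$ one deduces that $\mathcal{A}_1$ is an $R_f$-module, and a localization argument identifies $g(\mathcal{A}_1)=R_f$, giving the split. In short, your reduction is correct and illuminating, but the object you end up needing to control ($R_f/Rz_0$) depends on $z_0$ in a way that defeats the easy endomorphism-ring argument; the paper's replacement of it by $R_f/R\otimes\mathcal{A}$ is exactly the move that makes the proof go through.
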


\begin{proof}
We may assume that $f$ is not invertible. Let $\mathcal{M}$  be a module of positive grade with respect to $fR$.
The flat resolution of $\frac{R_f}{R}$ is given by $$0\lo R\lo R_f\lo  \frac{R_f}{R}\lo 0.$$ Then $$\Tor_1^R(\frac{R_f}{R},\mathcal{M})=H^0(\Check{C}(f,R))\otimes_R\mathcal{M}=\HH^0_{(f)}(\mathcal{M})=0\quad(\ast)$$
Let $\mathcal{A}\cong \mathcal{A}_1\oplus \mathcal{A}_2$ be a nontrivial decomposition. Recall that  $$\frac{R_f}{R}\otimes_RR_f=\frac{R_f}{R_f}=0.$$ We apply $\frac{R_f}{R}\otimes_R-$  to $\zeta$ to deduce that 	$$0=\Tor_1^R(\frac{R_f}{R},R_f) \lo \frac{R_f}{R}\lo \frac{R_f}{R}\otimes_R \mathcal{A}\lo \frac{R_f}{R}\otimes_RR_f=0,$$i.e., $\frac{R_f}{R}\cong \frac{R_f}{R}\otimes_R\mathcal{A}$.
		\begin{enumerate}
		\item[Claim] A): $\frac{R_f}{R}$ is indecomposable.
		\item[]  Indeed, thanks to $\Check{C}heck$-complex, we know that $$\frac{R_f}{R}=H^1(\Check{C}(f,R))=\HH^1_{(f)}(R).$$ It is 
		easy to see  $$\End_R(\HH^1_{(f)}(R))\cong R^{\widehat{}_{(f)}}$$ which is a commutative integral domain. From this,  $\frac{R_f}{R}$ is indecomposable.
	\end{enumerate}Now, recall that $\mathcal{A}\cong \mathcal{A}_1\oplus \mathcal{A}_2$. Apply  $\frac{R_f}{R}\otimes_R-$
to it and use $\frac{R_f}{R}\cong \frac{R_f}{R}\otimes_R \mathcal{A}$ to see
$$\frac{R_f}{R}\cong \frac{R_f}{R}\otimes_R \mathcal{A}\cong  \frac{R_f}{R}\otimes_R \mathcal{A}_1\oplus \frac{R_f}{R}\otimes_R \mathcal{A}_2.$$
We combine Claim A) with this, to assume without loss of the generality that $\frac{R_f}{R}\otimes_R \mathcal{A}_1=0$. We apply $-\otimes_R\mathcal{A}_1$ to $0\to R\to R_f\to  \frac{R_f}{R}\to 0$ and deduce that $$0\stackrel{(\ast)}=\HH^0_{(f)}(\mathcal{A}_1)\cong\Tor_1^R(\frac{R_f}{R},\mathcal{A}_1)\lo \mathcal{A}_1\lo  R_f \otimes_R \mathcal{A}_1\lo \frac{R_f}{R}\otimes_R\mathcal{A}_1=0,$$i.e., $\mathcal{A}_1\cong  R_f \otimes_R \mathcal{A}_1$. Recall that  $g: \mathcal{A}\to R_f$.
Then
\begin{equation*}
\begin{array}{clcr}
R_f &=g(\mathcal{A})\\
&
= g(\mathcal{A})_f\\
&= g(\mathcal{A}_1\oplus \mathcal{A}_2)_f \\
&=g(\mathcal{A}_1)_f\oplus g(A_2)_f\\
&= g((\mathcal{A}_1)_f)\oplus g(\mathcal{A}_2)_f\\
& =g( \mathcal{A}_1) \oplus g(\mathcal{A}_2)_f.
\end{array}
\end{equation*}

  Since  $R_f$ is indecomposable, either $g( \mathcal{A}_1)=0$ or
$  g(\mathcal{A}_2)_f=0$. Suppose on the way of contradiction that $g( \mathcal{A}_1)=0$. On the one hand $\mathcal{A}_1\subset \ker(g)=R$,
and on the other hand $\mathcal{A}_1$ is not finitely generated. This contradiction implies that $  g(\mathcal{A}_2)_f=0$. Thus, 
$g(\mathcal{A}_1)=R_f$. From this, $R+\mathcal{A}_1=\mathcal{A}$. Since $R\cap \mathcal{A}_1=0$ we have $R\oplus \mathcal{A}_1=\mathcal{A}$.
The desired claim follows by this.
\end{proof}

\begin{question}(Matlis+Kaplansky, \cite{k})
	For what integral domains is it true that any torsionfree module of rank two is a direct sum of modules of rank one?
\end{question}

Here, we   reprove  \cite[Theorem 61]{mat3} and an essential part of \cite{k}.

\begin{corollary} Let $(R,\fm)$ be a  local domain such that every
torsion-free $R$-module of rank $2$ that is not finitely generated is a direct
sum of modules of rank $1$. Then $R$ is complete in the  $\fm$-adic topology.
\end{corollary}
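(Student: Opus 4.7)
The plan is to combine Fact \ref{schpr} with Proposition \ref{splitmat}. If $\dim R=0$ then the domain $R$ is a field and the conclusion is trivial, so I may assume $\dim R > 0$ and pick a system of parameters $x_1,\ldots,x_d$, each of which is a nonzero non-unit. By Fact \ref{schpr}, the task reduces to showing $\Ext^1_R(R_{x_i},R)=0$ for every $i$; since $\Ext^1$ turns finite direct sums in the first argument into finite direct products, it suffices to treat a single nonzero non-unit $f\in\fm$ and prove $\Ext^1_R(R_f,R)=0$.

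To that end I fix an arbitrary extension $\zeta\colon 0\to R\to \mathcal{A}\to R_f\to 0$ and verify that $\mathcal{A}$ lands in the class of modules to which the hypothesis of the corollary applies. First, $\mathcal{A}$ is torsion-free: any $r\in R\setminus\{0\}$ killing $a\in\mathcal{A}$ must send its image to $0$ in the torsion-free module $R_f$, so $a\in R$, and then $ra=0$ inside the domain $R$ forces $a=0$. Second, $\mathcal{A}$ has rank $2$, since rank is additive on short exact sequences of torsion-free modules. Third, $\mathcal{A}$ is \emph{not} finitely generated: indeed $R_f=\varinjlim(R\stackrel{f}\to R\stackrel{f}\to\cdots)$ fails to be finitely generated over $R$ whenever $f$ is a non-unit, and a quotient of a finitely generated module is itself finitely generated.

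By the standing hypothesis of the corollary, $\mathcal{A}\cong \mathcal{A}_1\oplus \mathcal{A}_2$ with each $\mathcal{A}_i$ of rank $1$, so in particular both summands are nonzero. Proposition \ref{splitmat} then applies and $\zeta$ splits. Since $\zeta$ was arbitrary, $\Ext^1_R(R_f,R)=0$. Feeding $f:=x_i$ for $i=1,\ldots,d$ gives $\Ext^1_R\bigl(\bigoplus_{i=1}^d R_{x_i},R\bigr)=0$, and Fact \ref{schpr} concludes that $R$ is complete in the $\fm$-adic topology.

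The main (and only) delicate point in this plan is ensuring that every extension $\mathcal{A}$ of $R_f$ by $R$ really meets the three structural conditions required to invoke the hypothesis, the crucial one being that $\mathcal{A}$ is not finitely generated; this is exactly where the non-invertibility of $f\in\fm$ enters. Once this is secured, Proposition \ref{splitmat} supplies the splitting for free and the argument is complete.
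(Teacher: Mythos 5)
Your argument is correct and follows the same route as the paper: reduce via Fact \ref{schpr} to showing $\Ext^1_R(R_{x_i},R)=0$ for a system of parameters, then for a fixed extension $0\to R\to\mathcal{A}\to R_f\to 0$ verify that $\mathcal{A}$ is torsion-free of rank $2$ and not finitely generated so the hypothesis plus Proposition \ref{splitmat} give a splitting. You simply spell out the verifications (torsion-freeness, rank additivity, non-finite-generation of $R_f$ for a non-unit $f$) that the paper's two-line proof leaves implicit.
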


\begin{proof}
	Let $x_1,\ldots, x_d$ be a system of parameters of $\fm$.	
	The assumptions  allow us to apply
 Proposition \ref{splitmat}, and deduce that $\Ext^1_R( R_{x_i},R)=0$ for all $i$.
	By Fact \ref{schpr}, $R$ is complete in $\fm$-adic topology.
\end{proof}

\begin{remark}
	Among other things, \cite{moh2} talks about rings  for which
any finitely generated reflexive module is a direct
sum of modules of rank $1$.
 \end{remark}

\iffalse

The above method also gives a modern proof of  \cite[Theorem 1]{matcanada}:

\begin{fact}(Matlis) 
	Let $(R,\fm)$ be a local one-dimensional integral domain. Then $Q(R)/R$
	is artinian.
	\end{fact}

\begin{proof}
Since $R$ is 1-dimensional $H^1_{\fm}(R)=Q(R)/R$.  It is enough to recall that
$H^\ast_{\fm}(R)$ is artinian without any regard on the dimension and even the integral domain assumption.
\end{proof}\fi

The following simplifies \cite[Theorem 3.4]{Chase} and removes its integral domain assumption:

\begin{proposition}\label{24}
  Let $(R,\frak{m})$  be a 1-dimensional complete ring. The following are equivalent: 
  
 \begin{enumerate}	
 	\item[i)] $t(M) \oplus \overline{M} \cong M$ for all finitely generated $R$-module, \item[ii)] $R$ is regular. \end{enumerate}
\end{proposition}
\begin{proof}
$i) \Rightarrow ii)$:	Suppose $\Ext_{R}^{2} (R/\frak{m},R/\frak{m}) \neq 0 $. By applying $\Hom(-,R/\frak{m})$ to
	$$ 0 \longrightarrow \frak{m} \longrightarrow R \longrightarrow R/\frak{m}\lo 0$$ 
	 yields us that
	$$0=\Ext_{R}^{1}(R,R/\frak{m}) \longrightarrow \Ext_{R}^{1}(\frak{m},R/\frak{m}) \longrightarrow \Ext^{2}( R/\frak{m} , R/\frak{m}) \longrightarrow \Ext^{2}(R,R/\frak{m})=0.$$Now, recall Yoneda's correspondence:
	$$\Ext_{R}^{1}(\frak{m},R/\frak{m}) \rightleftharpoons \lbrace 0 \longrightarrow R/\frak{m} \longrightarrow X \longrightarrow \frak{m} \longrightarrow 0  \rbrace.$$
	Since $\Ext_{R}^{1}(\frak{m},R/\frak{m})$ is not zero, then by Yoneda lemma there exists an exact sequence
	$$0\longrightarrow R/\frak{m} \longrightarrow X \longrightarrow \frak{m} \longrightarrow 0 \quad(\ast),$$
	which is not split.
	Since $\lbrace \frak{m}, R/\frak{m} \rbrace $ are finitely generated, then $X$ is finitely generated. Recall that
	$\frak{m}$ is torsion-free. Then  \begin{enumerate}
		\item[$\bullet$] $\tor(X)=R/\frak{m}$ and 	
		\item[$\bullet$] $X/ \tor(X)$ is $\fm$. \end{enumerate}
	 From these, and according to the assumption, $t(X) \oplus \overline{X} \cong X$. In view of Theorem 1.1, we get to the contradiction that $(\ast)$ splits. So, $\Ext_{R}^{2} (R/\frak{m},R/\frak{m}) = 0 $, and consequently, $R$ is regular.
	 
	 $ii) \Rightarrow  i)$:
	 This is trivial.
\end{proof}

\section{A question by  Gerstner}
	Let $k$ be an uncountable field, and  let $R_0:=k[X,Y]_{(X,Y)}$. 
We observed in \S 8 that $\Ext^i_{R_0}(\mathcal{Q}(R_0),R_0)=0$ for $i\leq 1$.
Note that $\depth(R_0)=2$.
Now, let $R$ be any commutative ring. Gerstner asked: 

 \begin{question}\label{101}
 	For which property of $R$ to
 	ensure, that condition
 $\Hom_R(M,R)=\Ext^1_R(M,R)=0$
 	on $R$-module $M$ implies $M =0$. 
 \end{question}

One may deal with finitely generated modules, see \cite[Proposition  3]{oo}. 
\begin{proposition}
Let  $(R,\fm)$ be local, and assume the restriction that we focus on finitely generated modules. Then
in 	Question \ref{101} the desired  property completely determined. Namely, $\depth(R)\leq 1$.
\end{proposition}

\begin{proof}
Suppose $\depth(R)\geq 2$. There is an $R$-regular sequence two, namely $x,y$. Set $M:=R/(x,y)$.
Recall that $$\grade(I,R)=\inf\{i:\Ext^i_R(R/I,R)\neq 0\}.$$ From this,
 $\Hom_R(M.R)=\Ext^1_R(M,R)=0$ but $M$ is not free.
 
  Conversely, suppose that $\depth(R)\leq 1$. Let $M$ be such that  $\Hom_R(M,R)=\Ext^1(M,R)=0$. Suppose  $\depth(R)=0$ (resp. $\depth(R)=1$).
 Since $M^\ast$ is free (resp. $\Ext^1_R(M,R)=0$), and in view of \cite[Lemma 2.6]{ram} (resp. \cite[Lemma 3.3]{dao}) we know $M$ is free. In particular, $M$ is  reflexive. Since $M\cong M^{\ast\ast}=0^\ast=0$,
 we conclude that $M$ is zero, as claimed by 	Question \ref{101}. 
\end{proof}

\section{Mores on $\pd_{R}(\mathcal{Q}(R))$}

We start by the following nontrivial fact from \cite{GR}:

\begin{fact}\label{GR}
Let $(R,\fm)$ be local and $F$ be flat.
Then $\pd(F)\leq \dim R$.	
\end{fact}

\textbf{Second proof of Discussion
\ref{surjective} and
Example \ref{89}}: Apply the above fact. $\Box$

\begin{notation}
	The continuum hypothesis abbreviated by $\textbf{CH}$. By definition, it means  $2^{\aleph_{0}}=\aleph_1$.
\end{notation}

\begin{theorem}\label{113}(Also, see \cite[3.3.2]{GR})
	Let $(R,\fm,k)$ be a complete local integral domain and of
	dimension $d$. The following assertions are valid:\begin{enumerate}	
		\item[a)] Suppose $d=1$. Then $\pd_{R}(\mathcal{Q}(R))=1$.
		\item[b)] If $d=2$ and $R$ is $UFD$, then $\pd_{R}(\mathcal{Q}(R))=2$.
		\item[c)] Adopt $\textbf{CH}$ and $k:=\mathbb{C}$. Then
		 $\pd_{R}(\mathcal{Q}(R))\leq2$.
		\item[d)]	Assume in addition to c) that  $R$ is $UFD$ and of dimension bigger than one. Then $\pd_{R}(\mathcal{Q}(R))=2$.
	\end{enumerate}
\end{theorem}
\begin{proof}a) This is due to Matlis. As another argument apply Fact \ref{GR}.

b) Recall that $UFD$ rings are normal domains. By Serre's characterization of normality (see \cite[Theorem 23.8]{mat}), $R$ satisfies $(S_2)$ and $(R_1)$. Since $d=2$, the $(S_2)$ condition implies that $R$ is Cohen-Macaulay. Also, recall from \cite{sv} that a noetherian complete local ring  equipped with  countable prime avoidance. Now, the reminder of the argument is similar to Example \ref{891} and we left the straightforward modification to the reader.

c) Without loss of generality we may and do assume that $d>2$.	By Cohen's structure theorem (see \cite[Theorem 29.4 (iii)]{mat}),
	there is a regular local ring $A$
	of dimension $d$
	such that $A\subseteq R$ is finite. In particular, it is integral.
	Let $S:=A\setminus \{0\}$. Then $S^{-1}A
\subseteq S^{-1}R$ is integral. Recall from \cite[Lemma 9.1]{mat} that if in the integral extension of rings one of the ring is field, then it follows that the other one is field. From this, $S^{-1}R$ is a field, and so it is equal to the fraction field of $R$. We proved that 
$$\mathcal{Q}(R)=\mathcal{Q}(A)\otimes_AR.$$
	Let us apply the $\textbf{CH}$
	assumption 
along with the \cite[Theorem 6.10]{os} to observe that
$\pd_A(\mathcal{Q}(A))= 2$ (recall that $d>2$). 
Let $N$ be any $R$-module. So, it is equipped with the structure of an $A$-module. By definition, $\Ext_A^j(\mathcal{Q}(A),N)=0$ for all $j>2$. Since $\mathcal{Q}(A)$ is flat over $A$, we deduced that $\Tor^A_+(\mathcal{Q}(A),R)=0$.
Following Cartan-Eilenberg (see \cite[VI.4.1.3]{ce}) we have $$0=\Ext^j_A(\mathcal{Q}(A), N)\otimes_AR\cong\Ext^j_{{R}}(\mathcal{Q}(A)\otimes_AR,N)\cong\Ext^j_{{R}}(\mathcal{Q}(A)\otimes_AR, N)$$for all $j>2$. We deduce from this that $$\pd_R(\mathcal{Q}(A)\otimes_AR)\leq 2.$$
So, $$\pd_R(\mathcal{Q}(R))\leq 2.$$

d) The previous item shows that
$\pd_R(\mathcal{Q}(R))\leq 2.$ Suppose
on the way of contradiction that $\pd_R(\mathcal{Q}(R))\neq2$. This implies that
$\pd_R(\mathcal{Q}(R))=1$. In the light of
\cite[Theorem 1]{kapc} we observe that $\mathcal{Q}(R)$
is countably generated as an $R$-module.
Let $\{1/x_n:n\in\mathbb{N}\}$ be a generating set of $\mathcal{Q}(R)$. Without loss of generality, and by an easy induction on the number of irreducible components of $x_n$,  we may assume in addition that $\{x_n\}$ are irreducible.
Recall from \cite{sv} that a noetherian complete local ring  equipped with  countable prime avoidance. 
Let $$\Sigma:=\{\fp\in\Spec(R):\Ht(\fp)=1\}.$$
Suppose on the way of contradiction that $\Sigma$ is countable. Since $\fm\subset \bigcup _{\fp\in\Sigma}\fp$, we deuce that $\fm=\fp$ for some $\fp\in\Sigma$.  Also, recall that $\dim R>1$.
This contradiction shows that there are uncountable family of height one prime ideals. Now, we are going to use the assumption that $R$ is $UFD$, and conclude that its height prime ideals are principal. In sum, the ring is equipped with uncountable family of  irreducible elements. So, there is an irreducible $x$ such that $x\notin\{x_n\}$ even up to generating ideal. There are $r_i\in R$ and $n\in\mathbb{N}$ such that $$1/x=\sum_{i=1}^n\frac{r_i}{x_i}.$$Let $u_i:=(\prod_j x_j)/x_i$. Then $$\prod_j x_j =\sum_i( r_i u_i)x
\subseteq (x).$$Since $(x)$ is prime, $(x_j)=(x)$ for some $j$. This  contradiction shows that $\pd_R(\mathcal{Q}(R))=2$.
	\end{proof}

\begin{theorem}
	Let $R$ be an affine integral domain of
	dimension $d$ over $\mathbb{C}$. The following assertions are valid:
\begin{enumerate}	
	\item[a)] Suppose $d=1$. Then $\pd_{R}(\mathcal{Q}(R))=1$.
	\item[b)] If $d=2$ and $R$ is $UFD$, then $\pd_{R}(\mathcal{Q}(R))=2$.
	\item[c)] Adopt $\textbf{CH}$. Then $\pd_{R}(\mathcal{Q}(R))\leq2$.
	\item[d)]	Assume in addition to c) that  $R$ is $UFD$ and of dimension bigger than one. Then $\pd_{R}(\mathcal{Q}(R))=2$.
\end{enumerate}
\end{theorem}

\begin{proof}
	This is similar to the previous theorem. Instead of Cohen's structure theorem, we need to apply the Noether's normalization theorem (see \cite[Lemma 33.2]{mat}), and 
	recalling that a noetherian local ring with an uncountable residue field equipped with  countable prime avoidance.
\end{proof}
\begin{example}	Let $R:=\mathbb{C}[x_1,\ldots,x_5]/(x_1^2+\ldots+x_5^2)$. 
	 If   $\textbf{CH}$ holds, then $\pd_{R}(\mathcal{Q}(R))=2$.	
\end{example}

\begin{proof}
Recall that   $R$ is 
	Cohen-Macaulay
	and of dimension four.
		 Since $R$ is $UFD$, and in view of the previous theorem, $\pd_{R}(\mathcal{Q}(R))=2$.
\end{proof}
 
 In order to see the assumptions from Theorem \ref{113} are (not) important, we ask:
 \begin{question} Let $R$ be a domain.
 	
 i)	Suppose $\depth(R)=1$ and $\dim(R)=2$. What is $\pd_{R}(\mathcal{Q}(R))$?

ii) Suppose $\depth(R)=\dim(R)=2$. What is $\pd_{R}(\mathcal{Q}(R))$? \end{question}

\section{Splitting with the theme of Matlis}
The following extends Matlis result (see Corollary 12.2) to higher rank:
\begin{theorem}
	Let $(R,\fm)$ be a 1-dimensional complete domain with fraction filed $Q$. Let $B$ and $B'$ be torsion-free and of finite rank. Suppose $H^1_\fm(B)\cong H^1_\fm(B')$. Then $B\oplus Q^{t'}= B'\oplus Q^{t}$. In particular, if they have same rank then $B\cong B'$.
\end{theorem}

\begin{proof}
	By \cite{matcanada} there are finitely generated modules $M,M'$ and two integers $t,t'$ such that $B\cong Q^t\oplus M$ and  $B'\cong Q^{t'}\oplus M'$. Since $H^1_\fm(Q^{t'})=H^1_\fm(Q^{t})$, we see
 	$H^1_{\fm}(M')=H^1_\fm(M)$. By local duality, $$\Hom(M',\omega_R)=H^1_\fm(M')^v=H^1_\fm(M)^v=\Hom(M,\omega_R).$$Since $M\subseteq B$ it is torsion-free. Since the ring is one-dimensional, $M$ is maximal Cohen-Macaulay. The same thing holds for $M'$.  
Now, we use \cite[3.3.10]{BH} to conclude that 
 $$\begin{array}{ll}
M&\stackrel{}=\Hom( \Hom(M^{},\omega_R),\omega_R) \\
&\stackrel{}= \Hom( \Hom(M^{'},\omega_R),\omega_R)\\
&\stackrel{}=M^{'}.\\
\end{array}$$So, $$B'\oplus Q^{t}\cong Q^{t+t'}\oplus M'\cong Q^{t+t'}\oplus M=B\oplus Q^{t'},$$and the claim follows.
\end{proof}

\begin{corollary}(Matlis)
	Let $(R,\fm)$ be a 1-dimensional complete domain with  fraction filed $Q$. Let $B\subset Q$ and $B'\subset Q$ both congaing $R$. If $Q/B\cong Q/ B'$ then $B\cong B'$.
\end{corollary}

\begin{proof}
	This is clear as $Q/B=H^1_\fm(B)$.
\end{proof}

Here, is the higher-dimensional version:
\begin{corollary} 
	Let $(R,\fm)$ be a d-dimensional complete domain with  fraction filed $Q$. Let $B_1\subset Q$ and $B_2\subset Q$ both containing $R$. If $Q/B_1\cong Q/ B_2$ then $B_1\cong B_2$.
\end{corollary}

\begin{proof}
	Following \cite[Corollary 6.11(2)]{mat2}, one may deduce that $B_i$ are cotorsion. This allows us to apply \cite[Proposition 3.2(2)]{mat2} to see 	
 
	$$\begin{array}{ll}
	B_2	&\cong\Hom_{R}  (Q/R, Q/R\otimes B_2)\\
	&\cong\Hom_{R}  (Q/R, Q/  B_2)\\
	&\cong\Hom_{R}  (Q/R, Q/  B_1)\\
&\cong\Hom_{R}  (Q/R, Q/R\otimes B_1)\\	
&\cong B_1,\end{array}$$
 as claimed.
\end{proof}

\end{document}